\def\doi#1{\href{https://doi.org/\detokenize{#1}}{\url{https://doi.org/\detokenize{#1}}}}
\definecolor{darkred}{RGB}{150, 0, 0}
\definecolor{darkgreen}{RGB}{0, 150, 0}
\definecolor{darkblue}{RGB}{0, 0, 150}
\newcolumntype{L}[1]{>{\raggedright\let\newline\\\arraybackslash\hspace{0pt}}m{#1}}
\newcolumntype{C}[1]{>{\centering\let\newline\\\arraybackslash\hspace{0pt}}m{#1}}
\newcolumntype{R}[1]{>{\raggedleft\let\newline\\\arraybackslash\hspace{0pt}}m{#1}}
\newcommand{\R}{\mathds{R}}
\newcommand{\Z}{\mathds Z}
\newcommand{\feasregion}{\ensuremath{\mathrm\Phi}}
\newcommand{\symmetricgroup}[1]{\ensuremath{\mathcal{S}_{#1}}}
\newcommand{\define}{\ensuremath{\coloneqq}}
\newcommand{\scip}{{\tt SCIP}\xspace}
\newcommand{\etal}{et~al.\@\xspace}
\newcommand{\binary}{\ensuremath{\{0, 1\}}}
\newcommand{\symretope}{\ensuremath{\mathcal{X}}\xspace}
\newcommand{\coNP}{\ensuremath{\mathrm{coNP}}}
\newcommand{\NP}{\ensuremath{\mathrm{NP}}}
\newcommand{\bnbtree}{\ensuremath{\mathcal B}}
\newcommand{\bnbnodes}{\ensuremath{\mathcal V}}
\newcommand{\bnbedges}{\ensuremath{\mathcal E}}
\newcommand{\stab}[2]{\mathrm{stab}(#1, #2)}
\newcommand{\id}{\ensuremath{\mathrm{id}}}
\newcommand{\ihat}{\ensuremath{\hat\imath}}
\newcommand{\bigo}{\ensuremath{\mathop{\mathcal{O}}}}
\newcommand{\code}[1]{\texttt{#1}}
\newcommand{\eqreffromto}[2]{(\ref{#1}\nobreakdash--\ref{#2})}
\newcommand{\domain}{\ensuremath{\mathcal{D}}}
\newcommand{\optval}{\mathrm{OPT}}
\newcommand{\isomorphismpruning}{IsoPr\xspace}
\newcommand{\orbitalfixing}{OF\xspace}
\newcommand{\SHC}{SHC\xspace}
\newcommand{\SHCs}{\SHC{}s\xspace}
\newcommand{\VDR}{VDR\xspace}
\newcommand{\VDRs}{\VDR{}s\xspace}
\newcommand{\lexfix}{LexFix\xspace}
\newcommand{\lexred}{LexRed\xspace}
\newcommand{\bb}{B\&{}B\xspace}
\newcommand{\bnb}{\bb}
\newcommand{\card}[1]{|#1|}
\newcommand{\restrict}[2]{
	\ensuremath{
		#1
		\raisebox{-.1ex}{\ensuremath{\big\rvert}}{}_{\smash{#2}}
	}}
\newcommand{\blank}{\ensuremath{\_}}
\newcommand{\red}[1]{\color{red}{#1}}
\DeclareMathOperator{\lexmin}{lexmin}
\DeclareMathOperator{\lexmax}{lexmax}
\theoremstyle{plain}
\newtheorem{theorem}{Theorem}
\newtheorem{lemma}[theorem]{Lemma}
\theoremstyle{definition}
\newtheorem{remark}[theorem]{Remark}
\newtheorem{example}[theorem]{Example}
\newtheorem{proposition}[theorem]{Proposition}
\newtheorem{problem}[theorem]{Problem}
\tikzset{bbnode/.append style={draw=black,circle,inner sep=0mm,minimum 
size=5mm}}
\title{A Unified Framework for Symmetry Handling}
\author{Jasper van Doornmalen}
\author{Christopher Hojny}
\affil{Combinatorial Optimization Group, Eindhoven University of 
Technology\\
  email: \{m.j.v.doornmalen, c.hojny\}@tue.nl}
\date{}
\begin{document}

\maketitle              

\begin{abstract}
  Handling symmetries in optimization problems is essential
  for devising efficient solution methods.
  In this article, we present a general framework that captures many of the
  already existing symmetry handling methods.
  While these methods are mostly discussed independently from each other, our
  framework allows to apply different methods simultaneously and thus
  outperforming their individual effect.
  Moreover, most existing symmetry handling methods only apply to binary variables.
  Our framework allows to easily generalize these methods to
  general variable types.
  Numerical experiments confirm that our novel framework is superior to the
  state-of-the-art symmetry handling methods as implemented in the solver
  \scip on a broad set of instances.
  \smallskip

  \noindent
  {\bfseries Keywords:}\ 
  symmetries, branch-and-bound, integer programming, propagation,
  lexicographic order.
\end{abstract}

\section{Introduction}
We consider optimization problems~$\optval(f,\feasregion) \define 
\min \{ f(x) : x \in \feasregion\}$ for a real-valued function~$f\colon\R^n
\to \R$ and feasible region~$\feasregion \subseteq
\R^n$ such that~$\optval(f,\feasregion)$ can be solved by (spatial)
branch-and-bound (\bb)~\cite{LandDoig1960,HorstTuy1996}.
This class of problems is very rich and captures problems such as
mixed-integer linear programs and mixed-integer nonlinear programs.
The core of \bb-methods is to repeatedly partition the feasible
region~$\feasregion$ into smaller subregions~$\feasregion'$
and to solve the reduced problem~$\optval(f, \feasregion')$.
Subregions do not need to be explored further if
it is known that they do not contain optimal or improving solutions (i.e., pruning by bound),
or if the region becomes empty (i.e., pruning by feasibility).
The sketched mechanism allows to routinely solve problems with thousands of
variables and constraints.
If symmetries are present, however, plain \bb usually struggles with
solving optimization problems as we explain next.

A \emph{symmetry} of the optimization problem is a
bijection~$\gamma\colon\R^n \to \R^n$ that maps
solution vectors $x \in \R^n$ to solution vectors $\gamma(x)$ 
while preserving the objective value and feasibility state, i.e., $f(x) =
f(\gamma(x))$ holds for all~$x \in \R^n$ and $x \in \feasregion$ if and
only if $\gamma(x) \in \feasregion$.
Resulting from this definition,
the set of all symmetries of an optimization problem
forms a group~$\bar{\Gamma}$.
When enumerating the subregions~$\feasregion'$ in \bb, it might happen that several subregions at different parts of the branch-and-bound tree contain equivalent copies of (optimal) solutions.
This results in unnecessarily large \bb trees.
By exploiting the presence of symmetries,
one could enhance \bb by finding more reductions and pruning rules
that further restrict the (sub-)regions without sacrificing finding
optimal solutions to the original problem
\cite{KouyialisEtAl2019,Liberti2012,pfetsch2019computational}.

To handle symmetries, different approaches have been discussed in the
literature.
Two very popular classes of symmetry handling methods are
\emph{symmetry handling constraints}
(\SHCs)~\cite{BendottiEtAl2021,Friedman2007,Hojny2020,hojny2019polytopes,KaibelEtAl2011,KaibelPfetsch2008,Liberti2008,Liberti2012,Liberti2012a,LibertiOstrowski2014}
and
\emph{variable domain reductions} (\VDRs) derived from the
\bb-tree~\cite{Margot2003,ostrowski2009symmetry,OstrowskiEtAl2011}.
Both approaches remove symmetric solutions from the search space without
eliminating all optimal solutions.
As we detail in Section~\ref{sec:overview}, \SHCs and \VDRs come with a 
different flavor.
\SHCs usually use a static scheme for symmetry reductions, 
whereas \VDRs dynamically find reductions based on decisions in the \bb-tree.
In their textbook form, \SHCs and \VDRs are thus incompatible, i.e., they
cannot be combined and that might leave some potential for symmetry reductions
unexploited.
Moreover, many \SHCs and \VDRs have only been studied for binary variables
and for symmetries corresponding to permutations of variables,
which restricts their applicability.

The goal of this article is to overcome these drawbacks.
We therefore devise a unified framework for symmetry handling.
The contributions of our framework are that it
\begin{enumerate}[label={{(C\arabic*)}},ref={C\arabic*}]
\item\label{contrib1} resolves incompatibilities between \SHCs and \VDRs,
\item\label{contrib2} applies for general variable types, and
\item\label{contrib3} can handle symmetries of arbitrary finite groups,
which are not necessarily permutation groups.
\end{enumerate}
Due to~\ref{contrib1}, one is thus not restricted anymore to either use
\SHCs or \VDR methods.
In particular, we show that many popular \VDR techniques for binary
variables such as orbital fixing~\cite{OstrowskiEtAl2011} and isomorphism
pruning~\cite{Margot2002,ostrowski2009symmetry}, but also \SHCs can be
simultaneously cast into our framework.
That is, our framework unifies the application of these techniques.
To fully facilitate our framework regarding~\ref{contrib2}, the second
contribution of this paper is a generalization of many symmetry handling
techniques from binary variables to general variable types.
This allows for handling symmetries in more classes of optimization
problems, in particular classes with non-binary variables.

Regarding~\ref{contrib3}, we stress that this result is not based on the
observation that every finite group is isomorphic to a permutation
group by Cayley's theorem~\cite{AlperinBell1995}, because the space in
which the isomorphic permutation group is acting might differ from~$\R^n$.

\paragraph{Outline}
After providing basic notations and definitions, Section~\ref{sec:overview}
provides an overview of existing symmetry handling methods.
In particular, we illustrate the techniques that we will later on cast into
our unified framework.
The framework itself will be introduced in Section~\ref{sec:framework}.
Section~\ref{sec:cast} shows how existing symmetry handling methods can be
used in our framework and how these methods can be generalized from binary
to general variables.
We conclude this article in Section~\ref{sec:num} with an extensive
numerical study of our new framework both for specific applications and
benchmarking instances.
The study reveals that our novel framework is substantially faster than the
state-of-the-art methods on both \SHCs and VDRs as implemented in the
solver \scip.

\paragraph{Notation and Definitions}
Throughout the article, we assume that we have access to a group~$\Gamma$
consisting of (not necessarily all) symmetries 
of the optimization problem $\optval(f, \feasregion)$.
That is, $\Gamma$ is a subgroup of $\bar{\Gamma}$,
which we denote by~$\Gamma \leq \bar{\Gamma}$.
We refer to~$\Gamma$ as a symmetry group of the problem.
For solution vectors~$x \in \R^n$, 
the set of symmetrically equivalent solutions is
its~\emph{$\Gamma$-orbit}~$\{\gamma(x) : \gamma\in\Gamma\}$.

Let~$\symmetricgroup{n}$ be the \emph{symmetric group} of~$[n] \define
\{1,\dots,n\}$.
Moreover, let~$[n]_0 \define [n] \cup \{0\}$.
Being in line with the existing literature on symmetry handling,
we assume that permutations~$\gamma \in \symmetricgroup{n}$ act on
vectors~$x \in \R^n$ by permuting their index sets, i.e.,
$\gamma(x) \define ( x_{\gamma^{-1}(i)} )_{i=1}^n$.
We call such symmetries \emph{permutation symmetries}.
The identity permutation is denoted by~$\id$.
To represent permutations~$\gamma$, we use their disjoint cycle
representation, i.e., $\gamma$ is the composition of disjoint cycles $(i_1,
\dots, i_r)$ such that $\gamma(i_k) = i_{k + 1}$ for $k \in \{1,\dots,
r-1\}$ and $\gamma(i_r) = i_1$.

In practice, the symmetry group $\Gamma$ is either provided by a user
or found using detection methods such as in~\cite{pfetsch2019computational,salvagnin2005dominance}.
Detecting the full permutation symmetry group for binary problems, 
however, is \NP-hard~\cite{margot2010symmetry}.
For non-linear problems,
depending on how the feasible region~$\feasregion$ is given,
already verifying if~$\gamma$ is a symmetry might be 
undecidable~\cite{Liberti2012}.

To handle symmetries, among others, we will make use of variable domain
propagation.
The idea of propagation approaches is, given a symmetry reduction rule and
domains for all variables, to derive reductions of some variable domains if
every solution adhering to the symmetry rule is contained in the reduced
domain.
More concretely, let~$\feasregion'$ be the feasible region of some
subproblem encountered during branch-and-bound.
For every variable $x_i$, $i \in [n]$, let~$\domain_i \subseteq \R$ be its
domain, which covers the projection of $\feasregion'$ on $x_i$, 
i.e., $\domain_i \supseteq \{ v \in \R : 
x_i = v \text{ for some } x \in \feasregion'\}$.
In an integer programming context, the domain $\domain_i$ corresponds to an interval in practice.
A symmetry reduction rule is encoded as a set~$\mathcal{C} \subseteq \R^n$,
which consists of all solution vectors that adhere to the rule.
The goal of \emph{variable domain propagation} is to find 
sets~$\domain'_i \subseteq \domain_i$, $i \in [n]$, such that
$
\mathcal{C} \cap \bigtimes_{i = 1}^n \domain'_i
=
\mathcal{C} \cap \bigtimes_{i = 1}^n \domain_i
$.
In this case, the domain of variable~$x_i$ can be reduced to~$\domain'_i$.
We say that propagation is \emph{complete} if, for every~$i \in [n]$,
domain~$\domain'_i$ is inclusionwise minimal.

Throughout this article, we denote \emph{full} \bb-trees by~$\bnbtree =
(\bnbnodes, \bnbedges)$, i.e., we do not prune nodes by their objective
value and do not apply enhancements such as cutting planes or bound
propagation.
This is only required to prove \emph{theoretical} statements about symmetry
handling and does not restrict their practical applicability as we will
discuss below.
If not mentioned differently, we assume~$\bnbtree$ to be finite, which
might not be the case for spatial branch-and-bound; the case of infinite
\bnb-trees will be discussed separately.
For~$\beta \in \bnbnodes$, let~$\chi_\beta$ be the set of its children
and let~$\feasregion(\beta) \subseteq \feasregion$ be the feasible 
solutions at $\beta$, i.e., the intersection of $\feasregion$
and the branching decisions.
If~$\beta$ is not a leaf, we assume that~$\feasregion(\omega)$, $\omega \in
\chi_\beta$, partitions~$\feasregion(\beta)$.
In our definitions, this is even the case for spatial branch-and-bound,
meaning that the partitioned feasible regions are not necessarily
closed sets.
We will discuss the practical consequences of this assumption below.

\section{Overview of symmetry handling methods for binary programs}
\label{sec:overview}

This section provides an overview of symmetry handling methods.
The methods lexicographic fixing, orbitopal fixing, isomorphism pruning,
and orbital fixing are described in detail, because we will later on show
that these methods can be cast into our framework and can be generalized
from binary to arbitrary variable domains.
Further symmetry handling methods will only be mentioned briefly.
We illustrate the different methods using the following running example.

\begin{problem}[NDB]
\label{prob:NDbinary}
Sherali and Smith~\cite{sherali2001models}
consider the \emph{Noise Dosage Problem} (ND).
There are $p$ machines, and on every machine a number of tasks
must be executed. For machine $i \in [p]$,
there are $d_i$ work cycles, each requiring $t_i$ hours of operation,
and each such work cycle induces $\alpha_i$ units of noise.
There are $q$ workers to be assigned to the machines, 
each of which is limited to $H$ hours of work.
The problem is to minimize the noise dosage of the worker
that receives the most units of noise.
We extend this problem definition with the requirement
that each worker can only be assigned once to the same machine,
which makes the problem a binary problem (NDB), namely to
\begin{subequations}
\makeatletter
\def\@currentlabel{NDB}
\makeatother
\renewcommand{\theequation}{NDB\arabic{equation}}%
\label{eq:NDB}
\begin{align}
\text{minimize}\
\eta &, \\
\text{subject to}\ 
\eta &\geq \sum\nolimits_{i \in [p]} \alpha_i \vartheta_{i,j} 
&& \text{for all}\ j \in [q],
\\
\sum\nolimits_{j \in [n]} \vartheta_{i,j} &= d_i
&& \text{for all}\ i \in [p],
\label{eq:ndb:demand}
\\
\sum\nolimits_{i \in [m]} t_i \vartheta_{i, j} &\leq H
&& \text{for all}\ j \in [q],
\label{eq:ndb:time}
\\
\vartheta &\in \binary^{p \times q},
\label{prob:ND:binary}
\\
\eta &\geq 0.
\end{align}
\end{subequations}
For a solution, $\vartheta$ represent
the worker schedules in a~$p \times q$ binary matrix.
The value of variable $\vartheta_{i, j}$ states how many tasks
on machine $i$ are allocated to worker $j$.
Since all workers have the same properties in this model,
symmetrically equivalent solutions are found by permuting 
the worker schedules.
This corresponds to permuting the columns of the $\vartheta$-matrix.
As such, a symmetry group of this problem is the group $\Gamma$
consisting of all column permutations of this $p \times q$ matrix.
\end{problem}

For illustration purposes, we focus on an NDB instance with ${p=3}$ machines and
${q=5}$ workers.
We stress that the symmetry handling methods work even if
variable domain reductions inferred by the model constraints are applied.
For the ease of presentation, however, we assume no such reductions
are made in the NDB problem instance.
For this reason, we do not specify~$d_i$, $t_i$, and~$H$.

\subsection{Symmetry handling constraints based on lexicographic order}
\label{sec:shcbin}

The philosophy of \emph{symmetry handling constraints} (\SHCs) is to
restrict the feasible region of an optimization problem to representatives
of the~$\Gamma$-orbits of feasible solutions.
A common way to do this, is to enforce that feasible solutions must be
\emph{lexicographically maximal} in their
$\Gamma$-orbit~\cite{Friedman2007}.

Let $x, y \in \R^n$. We say $x$ is
\emph{lexicographically larger} than $y$, denoted $x \succ y$,
if for some $k \in [n]$ we have $x_i = y_i$ for $i < k$, and $x_k > y_k$.
If $x \succ y$ or $x = y$, we write $x \succeq y$.
Since the lexicographic order specifies a total ordering on~$\R^n$,
to solve the optimization problem 
$\optval(f, \feasregion)$,
it is sufficient to consider only those solutions $x$
that are lexicographically maximal in their $\Gamma$-orbit. Let
\[
  \symretope \define \{ x \in \binary^n : x \succeq \gamma(x)
  \text{ for all } \gamma \in \Gamma \}.
\]
Then, solving~$\optval(f, \feasregion \cap \symretope)$
yields the same optimal objective and the same feasibility state
as the original problem.
Note, however, that deciding whether a vector~$x \in \binary^n$
is contained in $\symretope$ is \coNP-complete~\cite{babai1983canonical}.
Complete propagation of the \SHCs~\symretope is thus \coNP-hard for
general groups.
In practice, one therefore either neglects the group structure or applies
specialized algorithms for particular
groups~\cite{BendottiEtAl2021,doornmalenhojny2022cyclicsymmetries}.
We discuss lexicographic fixing and orbitopal fixing as representatives for
these two approaches.

\subsubsection{Lexicographic fixing (\lexfix)}
\label{sec:lexfix}
Instead of handling $x \succeq \gamma(x)$ for all $\gamma \in \Gamma$,
one can handle this \SHC for a single permutation~$\gamma$ only.
For binary problems, \cite{friedman2007fundamental} shows that~$x
\succeq \gamma(x)$ is equivalent to the linear inequality~$\sum_{k=1}^n 2^{n-k} x_k 
\geq \sum_{k=1}^n 2^{n-k} \gamma(x)_k$.
Due to the large coefficients, however, these inequalities might cause
numerical instabilities.
To circumvent numerical instabilities, \cite{hojny2019polytopes} presents
an alternative family of linear inequalities modeling~$x \succeq \gamma(x)$
in which all variable coefficients are either~$0$ or~$\pm 1$ and that can
be separated efficiently.

Alternatively, $x \succeq \gamma(x)$ can also be enforced using a
complete propagation algorithm that runs in linear
time~\cite{BestuzhevaEtal2021OO,doornmalenhojny2022cyclicsymmetries}.
Since a variable domain reduction in the binary setting corresponds to
fixing a variable, we refer to this algorithm as the \emph{lexicographic
fixing algorithm}, or \lexfix in short.

Using our running example NDB, we illustrate the idea of \lexfix for the
permutation~$\gamma$ that exchanges column~$2$ and~$3$ and fixes all
remaining columns.
Since the lexicographic order depends on a specific variable ordering, we
assume that the variables of the~$\vartheta$-matrix are sorted row-wise.
That is, $x = (\vartheta_{1,1}, \dots, \vartheta_{1, 5};
\dots; \vartheta_{3,1}, \dots, \vartheta_{3, 5})$.
We omit $\eta$ from the vector, since the orbit of $\eta$
is trivial with respect to $\Gamma$.

When removing fixed points 
of the solution vector from~$\gamma$,
enforcement of~$x \succeq \gamma(x)$ corresponds to
$
(\vartheta_{1,2}, \vartheta_{1,3}, 
\vartheta_{2,2}, \vartheta_{2,3}, 
\vartheta_{3,2}, \vartheta_{3,3}) 
\succeq 
(\vartheta_{1,3}, \vartheta_{1,2}, 
\vartheta_{2,3}, \vartheta_{2,2}, 
\vartheta_{3,3}, \vartheta_{3,2}) 
$,
which in turn corresponds to
$
(\vartheta_{1,2},
\vartheta_{2,2},
\vartheta_{3,2})
\succeq 
(\vartheta_{1,3},
\vartheta_{2,3},
\vartheta_{3,3}) 
$.
Complete propagation of that constraint for the running example
is shown in Figure~\ref{fig:branching:lexfix}.
For instance, in the leftmost node, $\vartheta_{1,3}$ can be fixed to~0,
because any solution with~$\vartheta_{1,3} = 1$ and satisfying the
remaining local variable domains violates the lexicographic order
constraint as~$(\vartheta_{1,2},\vartheta_{2,2},\vartheta_{3,2}) =
(0,\vartheta_{2,2},\vartheta_{3,2}) \nsucceq (1,0,\vartheta_{3,3}) =
(\vartheta_{1,3},\vartheta_{2,3},\vartheta_{3,3})$.

\begin{figure}[!tbp]
\centering
\begin{tikzpicture}[
	x=1cm, y=1cm, level distance=1.5cm,
	level 1/.style={sibling distance=7cm},
	level 2/.style={sibling distance=3.5cm},
	level 3/.style={sibling distance=1.5cm},
	font=\footnotesize
]

\node {$\left[\begin{array}{*5{@{}wc{2mm}@{}}}
		\blank&\blank&\blank&\blank&\blank\\
		\blank&\blank&\blank&\blank&\blank\\
		\blank&\blank&\blank&\blank&\blank
	\end{array}\right]$}
	child {
		node {$\left[\begin{array}{*5{@{}wc{2mm}@{}}}
			\blank&\blank&\blank&\blank&\blank\\
			\blank&\blank&     0&\blank&\blank\\
			\blank&\blank&\blank&\blank&\blank
		\end{array}\right]$}
		child {
			node {$\left[\begin{array}{*5{@{}wc{2mm}@{}}}
				\blank&     0&\red 0&\blank&\blank\\
				\blank&\blank&     0&\blank&\blank\\
				\blank&\blank&\blank&\blank&\blank
			\end{array}\right]$}
			edge from parent node[left] {$\vartheta_{1, 2} \gets 0$};
		}
		child {
			node {$\left[\begin{array}{*5{@{}wc{2mm}@{}}}
				\blank&1&\blank&\blank&\blank\\
				\blank&\blank&0&\blank&\blank\\
				\blank&\blank&\blank&\blank&\blank
			\end{array}\right]$}
			child {
				node {$\left[\begin{array}{*5{@{}wc{2mm}@{}}}
					\blank&1&0&\blank&\blank\\
					\blank&\blank&0&\blank&\blank\\
					\blank&\blank&\blank&\blank&\blank
				\end{array}\right]$}
				edge from parent node[left] {$\vartheta_{1, 3} \gets 0$};
			}
			child {
				node {$\left[\begin{array}{*5{@{}wc{2mm}@{}}}
					\blank&1&1&\blank&\blank\\
					\blank&\blank&0&\blank&\blank\\
					\blank&\blank&\blank&\blank&\blank
				\end{array}\right]$}
				edge from parent node[right] {$\vartheta_{1, 3} \gets 1$};
			}
			edge from parent node[right] {$\vartheta_{1, 2} \gets 1$};
		}
		edge from parent node[left] {$\vartheta_{2, 3} \gets 0$};
	}
	child {
		node {$\left[\begin{array}{*5{@{}wc{2mm}@{}}}
			\blank&\blank&\blank&\blank&\blank\\
			\blank&\blank&     1&\blank&\blank\\
			\blank&\blank&\blank&\blank&\blank
		\end{array}\right]$}
		child{
			node {$\left[\begin{array}{*5{@{}wc{2mm}@{}}}
				\blank&     0&\red 0&\blank&\blank\\
				\blank&\red 1&     1&\blank&\blank\\
				\blank&\blank&\blank&\blank&\blank
			\end{array}\right]$}
			edge from parent node[left] {$\vartheta_{1, 2} \gets 0$};
		}
		child{
			node {$\left[\begin{array}{*5{@{}wc{2mm}@{}}}
				\blank&1&\blank&\blank&\blank\\
				\blank&\blank&1&\blank&\blank\\
				\blank&\blank&\blank&\blank&\blank
			\end{array}\right]$}
			child {
				node {$\left[\begin{array}{*5{@{}wc{2mm}@{}}}
					\blank&1&0&\blank&\blank\\
					\blank&\blank&1&\blank&\blank\\
					\blank&\blank&\blank&\blank&\blank
				\end{array}\right]$}
				edge from parent node[left] {$\vartheta_{1, 3} \gets 0$};
			}
			child {
				node {$\left[\begin{array}{*5{@{}wc{2mm}@{}}}
					\blank&     1&     1&\blank&\blank\\
					\blank&\red 1&     1&\blank&\blank\\
					\blank&\blank&\blank&\blank&\blank
				\end{array}\right]$}
				edge from parent node[right] {$\vartheta_{1, 3} \gets 1$};
			}
			edge from parent node[right] {$\vartheta_{1, 2} \gets 1$};
		}
		edge from parent node[right] {$\vartheta_{2, 3} \gets 1$};
	}
;

\end{tikzpicture}
\caption{Branch-and-bound tree for the NDB problem. Fixings by \lexfix are drawn red.}
\label{fig:branching:lexfix}
\end{figure}
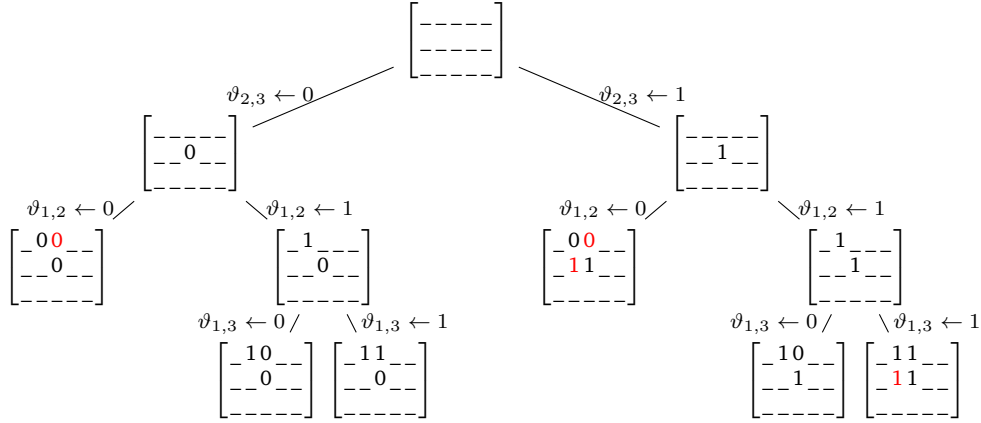

\subsubsection{Orbitopal fixing}
\label{sec:orbitopalfixing}

Note that \lexfix neglects the entire group structure and thus might not
find variable domain reductions that are based on the interplay of
different symmetries.
Since propagation for~$\symretope$ is \coNP-hard, special cases of
groups have been investigated that appear very frequently in
practice.
One of these groups corresponds to the symmetries present in NDB, i.e., the
symmetry group~$\Gamma$ acts on $p \times q$ matrices of binary variables
by exchanging their columns.
We refer to such matrix symmetries as \emph{orbitopal} symmetries.
Besides in NDB, orbitopal symmetries arise in many further applications
such as graph coloring or unit commitment
problems~\cite{BendottiEtAl2021,KaibelEtAl2011,margot2007symmetric}.

For the variable ordering discussed in Section~\ref{sec:lexfix}
and~$\Gamma$ being a group of orbitopal symmetries, one can show that
enforcing~$x \succeq \gamma(x)$ for all~$\gamma \in \Gamma$ is equivalent
to sorting the columns of the variable matrix in lexicographically
non-increasing order.
Bendotti et al.~\cite{BendottiEtAl2021} present a propagation
algorithm for such symmetries, so-called \emph{orbitopal fixing}.
Kaibel \etal~\cite{KaibelEtAl2011}
discuss a propagation algorithm for the case that each row of the variable
matrix has at most one~$1$-entry.
Both algorithms are complete and run in linear time.
Moreover, Kaibel and Pfetsch~\cite{KaibelPfetsch2008} derive a facet description of all
binary matrices with lexicographically sorted columns and at most (or
exactly) one~1-entry per row.
That is, the \SHC~$\symretope$ can be replaced by the facet description in
this case.

Given initial variable domains $\domain \subseteq \binary^{p \times
  q}$, the algorithm of Bendotti \etal finds the tightest variable domains
as follows.
First, the lexicographically minimal and maximal matrices in $\symretope
\cap \domain$ are computed.
Then, for each column in the variable matrix, the associated
variables can be fixed to the value of the lexicographically extreme matrices
up to the first row where these extremal matrices differ.
If the columns of the extremal matrices are identical, 
the whole column can be fixed.

For the running example, Figure~\ref{fig:branching:orbitopalfixing}
presents the branch-and-bound tree with variable fixings
by orbitopal fixing.
For instance, if~$(\vartheta_{2,3}, \vartheta_{1,2}) \gets (0, 0)$,
the lexicographically minimal and maximal matrices are
{\footnotesize $
\left[\begin{array}{*5{@{}wc{2mm}@{}}}
0&\underline 0&0&0&0\\
0&0&\underline 0&0&0\\
0&0&0&0&0
\end{array}\right]
$}
and
{\footnotesize $
\left[\begin{array}{*5{@{}wc{2mm}@{}}}
1&\underline 0&0&0&0\\
1&1&\underline 0&0&0\\
1&1&1&1&1
\end{array}\right]
$}, with the branching decisions underlined,
respectively.
Applying orbitopal fixing then leads to the leftmost matrix in
Figure~\ref{fig:branching:orbitopalfixing}.

\begin{figure}[!tbp]
\centering
\begin{tikzpicture}[
	x=1cm, y=1cm, level distance=1.5cm,
	level 1/.style={sibling distance=7cm},
	level 2/.style={sibling distance=3.5cm},
	level 3/.style={sibling distance=1.5cm},
	font=\footnotesize
]

\node {$\left[\begin{array}{*5{@{}wc{2mm}@{}}}
		\blank&\blank&\blank&\blank&\blank\\
		\blank&\blank&\blank&\blank&\blank\\
		\blank&\blank&\blank&\blank&\blank
	\end{array}\right]$}
	child {
		node {$\left[\begin{array}{*5{@{}wc{2mm}@{}}}
			\blank&\blank&\blank&\blank&\blank\\
			\blank&\blank&     0&\blank&\blank\\
			\blank&\blank&\blank&\blank&\blank
		\end{array}\right]$}
		child {
			node {$\left[\begin{array}{*5{@{}wc{2mm}@{}}}
				\blank&     0&\red 0&\red 0&\red 0\\
				\blank&\blank&     0&\red 0&\red 0\\
				\blank&\blank&\blank&\blank&\blank
			\end{array}\right]$}
			edge from parent node[left] {$\vartheta_{1, 2} \gets 0$};
		}
		child {
			node {$\left[\begin{array}{*5{@{}wc{2mm}@{}}}
				\red 1&1&\blank&\blank&\blank\\
				\blank&\blank&0&\blank&\blank\\
				\blank&\blank&\blank&\blank&\blank
			\end{array}\right]$}
			child {
				node {$\left[\begin{array}{*5{@{}wc{2mm}@{}}}
					1&1&0&\red 0&\red 0\\
					\blank&\blank&0&\red 0&\red 0\\
					\blank&\blank&\blank&\blank&\blank
				\end{array}\right]$}
				edge from parent node[left] {$\vartheta_{1, 3} \gets 0$};
			}
			child {
				node {$\left[\begin{array}{*5{@{}wc{2mm}@{}}}
					     1&     1&     1&\blank&\blank\\
					\blank&\blank&     0&\blank&\blank\\
					\blank&\blank&\blank&\blank&\blank
				\end{array}\right]$}
				edge from parent node[right] {$\vartheta_{1, 3} \gets 1$};
			}
			edge from parent node[right] {$\vartheta_{1, 2} \gets 1$};
		}
		edge from parent node[left] {$\vartheta_{2, 3} \gets 0$};
	}
	child {
		node {$\left[\begin{array}{*5{@{}wc{2mm}@{}}}
			\blank&\blank&\blank&\blank&\blank\\
			\blank&\blank&     1&\blank&\blank\\
			\blank&\blank&\blank&\blank&\blank
		\end{array}\right]$}
		child{
			node {$\left[\begin{array}{*5{@{}wc{2mm}@{}}}
				\blank&     0&\red 0&\red 0&\red 0\\
				\blank&\red 1&     1&\blank&\blank\\
				\blank&\blank&\blank&\blank&\blank
			\end{array}\right]$}
			edge from parent node[left] {$\vartheta_{1, 2} \gets 0$};
		}
		child{
			node {$\left[\begin{array}{*5{@{}wc{2mm}@{}}}
				\red 1&     1&\blank&\blank&\blank\\
				\blank&\blank&     1&\blank&\blank\\
				\blank&\blank&\blank&\blank&\blank
			\end{array}\right]$}
			child {
				node {$\left[\begin{array}{*5{@{}wc{2mm}@{}}}
					     1&     1&     0&\red 0&\red 0\\
					\blank&\blank&     1&\blank&\blank\\
					\blank&\blank&\blank&\blank&\blank
				\end{array}\right]$}
				edge from parent node[left] {$\vartheta_{1, 3} \gets 0$};
			}
			child {
				node {$\left[\begin{array}{*5{@{}wc{2mm}@{}}}
					     1&     1&     1&\blank&\blank\\
					\red 1&\red 1&     1&\blank&\blank\\
					\blank&\blank&\blank&\blank&\blank
				\end{array}\right]$}
				edge from parent node[right] {$\vartheta_{1, 3} \gets 1$};
			}
			edge from parent node[right] {$\vartheta_{1, 2} \gets 1$};
		}
		edge from parent node[right] {$\vartheta_{2, 3} \gets 1$};
	}
;

\end{tikzpicture}
\caption{Branch-and-bound tree for the NDB problem.
Fixings by orbitopal fixing are drawn red.}
\label{fig:branching:orbitopalfixing}
\end{figure}
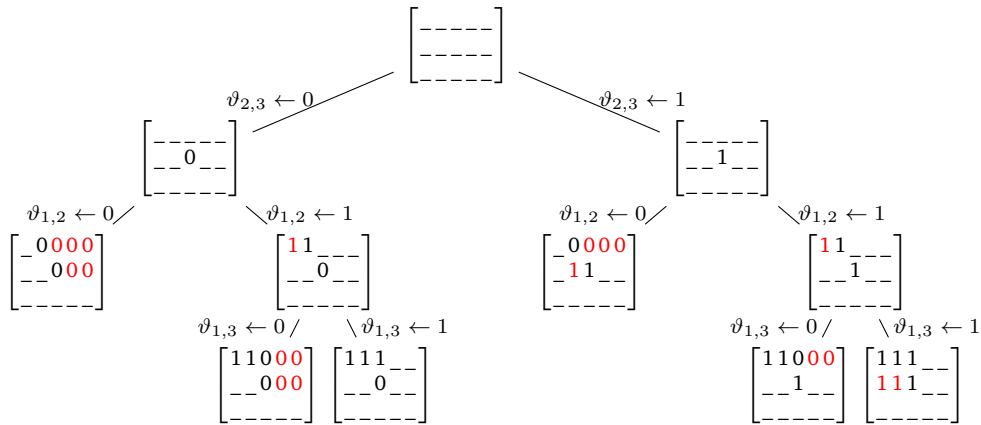

Note that orbitopal fixing does not find any variable domain reduction
after the first branching decision in
Figure~\ref{fig:branching:orbitopalfixing}.
The reason is that branching occurred for a variable in the second row.
To still be able to benefit from some symmetry reductions in this case,
Bendotti \etal~\cite{BendottiEtAl2021} also discuss a variant of orbitopal
fixing that adapts the order of rows based on the branching decisions.
They empirically show that this adapted algorithm performs better than the
original algorithm for the unit commitment problem.
We discuss the adapted variant in more detail and with more flexibility in
terms of our new framework in Section~\ref{sec:framework} in
Example~\ref{ex:orbitopalfixing}.

\subsection{Symmetry reductions based on branching tree structure}
\label{sec:overviewtree}

Recall that the \SHCs discussed in the previous section restrict the
feasible region of an optimization problem.
That is, already before solving the optimization problem, it is determined
which symmetric solutions are discarded.
A second family of symmetry handling techniques uses a more dynamic
approach, which prevents to create symmetric copies of subproblems already
created in the branch-and-bound tree.
The motivation of this is that symmetry reductions can be carried out earlier than in a static setting as described in the previous section.

Throughout this section, let~$\bnbtree = (\bnbnodes, \bnbedges)$ be a
branch-and-bound tree, where branching is applied on a single variable.
For a node $\beta \in \bnbnodes$, let $B_0^\beta$ (resp.~$B_1^\beta$) 
be the set of variable indices of a solution vector
that are fixed to~0 (resp.~1) by the branching decisions on
the rooted tree path to $\beta$.
Moreover, we assume~$\feasregion \subseteq \binary^n$ as the techniques
that we describe next have mostly been discussed for binary problems.

\subsubsection{Isomorphism pruning}
\label{sec:isomorphismpruning}

In classical branch-and-bound approaches, a node can be pruned if the
corresponding subproblem is infeasible (pruning by infeasibility) or if the
subproblem cannot contain an improving solution (pruning by bound).
In the presence of symmetry, Margot~\cite{Margot2002,Margot2003} and
Ostrowski~\cite{ostrowski2009symmetry} discuss another pruning rule that
discards symmetric or isomorphic subproblems, so-called \emph{isomorphism
  pruning}.

The least restrictive version of isomorphism pruning is due to
Ostrowski~\cite{ostrowski2009symmetry}.
Note that the way how we phrase isomorphism pruning differs from the
notation in~\cite{ostrowski2009symmetry}.
In terms of our unified framework that we discuss in
Section~\ref{sec:framework}, however, our notation is more suitable.

Let~$\beta \in \bnbnodes$ be a branch-and-bound tree node at depth~$m$,
and suppose that every branching decision corresponds to a single variable fixing.
Let $i_k$ be the index of the branching variable at depth~$k \in [m]$ on
the rooted path to~$\beta$.
Then, $B_0^\beta \cup B_1^\beta = \{i_1, \dots, i_m\}$.
Let~$\pi_\beta \in \symmetricgroup{n}$ be any permutation
with $\pi_\beta(i_k) = k$ for $k \in [m]$ and let~$y \in \binary^n$ such
that~$y_i = 1$ if and only if~$i \in B_1^\beta$.
For a vector~$x \in \R^n$ and~$A \subseteq [n]$, we denote
by~$\restrict{x}{A}$ its restriction to the entries in~$A$.
\begin{theorem}[Isomorphism Pruning]
  \label{thm:isoprune}
  Let~$\beta \in \bnbnodes$ be a node at depth~$m$.
  Node~$\beta$ can be pruned if there exists~$\gamma \in \Gamma$ such that
  $\restrict{\pi_\beta(y)}{[m]} \prec \restrict{\pi_\beta(\gamma(y))}{[m]}$.
\end{theorem}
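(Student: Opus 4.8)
The goal is to show that if there is a $\gamma \in \Gamma$ with $\restrict{\pi_\beta(y)}{[m]} \prec \restrict{\pi_\beta(\gamma(y))}{[m]}$, then node $\beta$ can be pruned, i.e., every optimal solution of the problem is represented somewhere else in the tree. The natural way to make "can be pruned" precise is: every feasible solution $x \in \feasregion(\beta)$ has a symmetric copy $\gamma(x)$ with strictly better... no — with $\gamma(x)$ lying in a node that is "to the left" of $\beta$ in some fixed traversal order, or more robustly, that $\gamma(x) \notin \feasregion(\beta)$ while $\gamma(x)$ is still feasible with the same objective, so discarding $\beta$ loses no orbit representative. Let me think about which formulation the paper wants.

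Since $x \in \feasregion(\beta) \subseteq \binary^n$ means $x_i = 0$ for $i \in B_0^\beta$ and $x_i = 1$ for $i \in B_1^\beta$, and $B_0^\beta \cup B_1^\beta = \{i_1,\dots,i_m\}$, the restriction $\restrict{x}{\{i_1,\dots,i_m\}}$ equals $\restrict{y}{\{i_1,\dots,i_m\}}$ on exactly those coordinates. Applying $\pi_\beta$, which sends $i_k \mapsto k$, we get $\restrict{\pi_\beta(x)}{[m]} = \restrict{\pi_\beta(y)}{[m]}$. So the hypothesis reads $\restrict{\pi_\beta(x)}{[m]} \prec \restrict{\pi_\beta(\gamma(y))}{[m]}$ for every $x \in \feasregion(\beta)$.

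Now I would argue as follows. First I would observe it suffices to show: for every $x \in \feasregion(\beta)$, the symmetric solution $\gamma^{-1}(x)$ (or $\gamma(x)$ — I need to get the direction right) satisfies $\pi_\beta(\gamma^{-1}(x)) \succ \pi_\beta(x)$ restricted to $[m]$, and hence $\gamma^{-1}(x) \notin \feasregion(\beta)$; combined with the invariance of $f$ and $\feasregion$ under $\Gamma$, this means the orbit of any feasible $x$ at $\beta$ contains a point not at $\beta$, so pruning $\beta$ does not remove the last representative of any orbit — at least not of orbits whose lexicographically-relevant structure is witnessed elsewhere. The cleaner statement is the standard one: if $x$ is feasible at $\beta$, then $\restrict{\pi_\beta(\gamma(y))}{[m]}$ describes a prefix that is a possible set of branching decisions "explored before" $\beta$ in Ostrowski's ordering, so the copy $\gamma^{-1}(x)$ would be (or have been) found in a subtree rooted at a sibling branched earlier. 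The key computation is just: $\restrict{\pi_\beta(\gamma(y))}{[m]}$ is a $0/1$ vector, the branching at depths $1,\dots,m$ on the path to $\beta$ fixed $\pi_\beta$-coordinate $k$ to $\pi_\beta(y)_k$, and since $\pi_\beta(\gamma(y))|_{[m]} \succ \pi_\beta(y)|_{[m]}$, the vector $\gamma(y)$ — and likewise $\gamma(x)$, since $x$ agrees with $y$ on the fixed coordinates — has a strictly lexicographically larger profile on the first $m$ relabeled coordinates; feeding this through the definition of $\symretope$ / the lexleader argument shows $\gamma(x)$ is "preferred", so $x$ is dominated.

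The main obstacle is formalizing "can be pruned" so the statement is actually a theorem rather than a definition, and then matching it to the lexicographic-leader machinery from Section~\ref{sec:shcbin}. Concretely, I expect the proof to: (1) reduce from arbitrary feasible $x \in \feasregion(\beta)$ to the single vector $y$ via the agreement $\restrict{x}{B_0^\beta \cup B_1^\beta} = \restrict{y}{B_0^\beta \cup B_1^\beta}$ and equivariance, i.e. $\pi_\beta(\gamma(x))$ and $\pi_\beta(\gamma(y))$ agree on $[m]$ whenever... — wait, that is false in general since $\gamma$ mixes coordinates; so actually the correct reduction must be that $\gamma(x)$ agrees with $\gamma(y)$ on $\gamma(B_0^\beta \cup B_1^\beta)$, and one checks $\restrict{\pi_\beta(\gamma(x))}{[m]}$ depends only on the coordinates of $x$ in $\gamma^{-1}(\pi_\beta^{-1}([m]))$, which need NOT be contained in the fixed set. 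Hmm. So the honest statement is the weaker "pruning is valid" meaning: there exists a feasible solution with the same objective that will be generated in another part of the tree. I would therefore (2) exhibit that other part: the node $\beta'$ on the path whose branching decisions are a prefix witnessing the strict inequality, and (3) show $\gamma^{-1}(x) \in \feasregion(\beta')$ for a $\beta'$ that is traversed before $\beta$, using that $\feasregion$ and $f$ are $\Gamma$-invariant. The delicate point — and the step I expect to spend the most care on — is handling the non-strict vs. strict parts of the lexicographic comparison at coordinates where $\pi_\beta(y)$ and $\pi_\beta(\gamma(y))$ agree, to be sure the copy genuinely lands in an earlier subtree and not in $\beta$ itself.
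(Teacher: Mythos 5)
There is a genuine gap, and you in fact put your finger on it yourself before retreating to a vaguer plan. You correctly observe that $\restrict{\pi_\beta(x)}{[m]} = \restrict{\pi_\beta(y)}{[m]}$ for every $x \in \feasregion(\beta)$, and you correctly observe that $\restrict{\pi_\beta(\gamma(x))}{[m]}$ depends on coordinates of $x$ that need not lie in the branched set, so one cannot conclude $\restrict{\pi_\beta(\gamma(x))}{[m]} = \restrict{\pi_\beta(\gamma(y))}{[m]}$. But the resolution is not to abandon the reduction to $y$; it is to replace equality by a \emph{componentwise domination}. Since $y$ equals $x$ on the branched coordinates and equals $0$ elsewhere while $x \in \binary^n$ is nonnegative, we have $y \leq x$ componentwise, hence $\gamma(y) \leq \gamma(x)$ componentwise, hence $\restrict{\pi_\beta(\gamma(y))}{[m]} \leq \restrict{\pi_\beta(\gamma(x))}{[m]}$ componentwise. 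Chaining this with the hypothesis gives
$\restrict{\pi_\beta(x)}{[m]} = \restrict{\pi_\beta(y)}{[m]} \prec \restrict{\pi_\beta(\gamma(y))}{[m]} \leq \restrict{\pi_\beta(\gamma(x))}{[m]}$,
and a lexicographic $\prec$ followed by a componentwise $\leq$ yields a lexicographic $\prec$. So \emph{every} $x \in \feasregion(\beta)$ violates the modified SHC $\restrict{\pi_\beta(x)}{[m]} \succeq \restrict{\pi_\beta(\gamma(x))}{[m]}$. This single observation is the entire content of the paper's argument (the lemma in Section~\ref{sec:isopruneRevisited}), and it is exactly the step your plan leaves unresolved.

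Your second deviation is in how ``can be pruned'' is discharged. You propose to exhibit an earlier-traversed node $\beta'$ containing the symmetric copy $\gamma^{-1}(x)$, which is Margot's traversal-order style of argument; you flag the strict-versus-nonstrict bookkeeping as delicate and do not carry it out. The paper avoids traversal order entirely: once one knows that no solution in $\feasregion(\beta)$ satisfies the constraints $\restrict{\pi_\beta(x)}{[m]} \succeq \restrict{\pi_\beta(\gamma(x))}{[m]}$ for all $\gamma \in \Gamma$, validity of pruning follows from the framework guarantee (Theorem~\ref{thm:main} with the branching-based prehandling structure) that every orbit retains exactly one representative satisfying these constraints at some leaf --- necessarily a leaf not below $\beta$. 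Without either the componentwise-domination step or a completed traversal-order argument, the proposal does not establish the theorem.
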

Testing if a vector is lexicographically maximal in its orbit
is a \coNP-complete problem~\cite{babai1983canonical}.
As such, deciding if $\beta$ can be pruned by isomorphism
is an \NP-complete problem.

\begin{remark}
  Margot~\cite{margot2007symmetric} also describes a variant of isomorphism pruning that can be used to
  handle symmetries of general integer variables.
  Margot's variant assumes a specific branching rule.
  We do not describe it in more detail as our framework can also
  handle general integer variables while not relying on any assumptions on the 
  branching rule such as Ostrowski's version for binary variables.
\end{remark}

Figure \ref{fig:branching:isomorphismpruning} shows the branch-and-bound
tree after applying isomorphism pruning.
The only node~$\beta$ that can be pruned is where
$(\vartheta_{2,3},\vartheta_{1,2},\vartheta_{1,3}) = (0, 0, 1)$.
This is due to the symmetry $\gamma$ swapping column~$2$ and~$3$.
For this node, 
$\pi_\beta(x) = (\vartheta_{2,3},\vartheta_{1,2},\vartheta_{1,3})$
and
$(\pi_\beta \circ \gamma)(x) = 
(\vartheta_{2,2},\vartheta_{1,3},\vartheta_{1,2})$,
and as such 
$
\pi_\beta(y)
= (0, 0, 1, \dots) \prec (0, 1, 0, \dots)
= \pi_\beta(\gamma(y))
$.

\begin{figure}[!tbp]
\centering
\begin{tikzpicture}[
	x=1cm, y=1cm, level distance=1.5cm,
	level 1/.style={sibling distance=7cm},
	level 2/.style={sibling distance=3.5cm},
	level 3/.style={sibling distance=1.5cm},
	font=\footnotesize
]

\node {$\left[\begin{array}{*5{@{}wc{2mm}@{}}}
		\blank&\blank&\blank&\blank&\blank\\
		\blank&\blank&\blank&\blank&\blank\\
		\blank&\blank&\blank&\blank&\blank
	\end{array}\right]$}
	child {
		node {$\left[\begin{array}{*5{@{}wc{2mm}@{}}}
			\blank&\blank&\blank&\blank&\blank\\
			\blank&\blank&     0&\blank&\blank\\
			\blank&\blank&\blank&\blank&\blank
		\end{array}\right]$}
		child {
			node {$\left[\begin{array}{*5{@{}wc{2mm}@{}}}
				\blank&0&\blank&\blank&\blank\\
				\blank&\blank&0&\blank&\blank\\
				\blank&\blank&\blank&\blank&\blank
			\end{array}\right]$}
			child {
				node {$\left[\begin{array}{*5{@{}wc{2mm}@{}}}
					\blank&0&0&\blank&\blank\\
					\blank&\blank&0&\blank&\blank\\
					\blank&\blank&\blank&\blank&\blank
				\end{array}\right]$}
				edge from parent node[left] {$\vartheta_{1, 3} \gets 0$};
			}
			child {
				node { \scalebox{5}{$\times$} }
				edge from parent node[right] {$\vartheta_{1, 3} \gets 1$};
			}
			edge from parent node[left] {$\vartheta_{1, 2} \gets 0$};
		}
		child {
			node {$\left[\begin{array}{*5{@{}wc{2mm}@{}}}
				\blank&1&\blank&\blank&\blank\\
				\blank&\blank&0&\blank&\blank\\
				\blank&\blank&\blank&\blank&\blank
			\end{array}\right]$}
			child {
				node {$\left[\begin{array}{*5{@{}wc{2mm}@{}}}
					\blank&1&0&\blank&\blank\\
					\blank&\blank&0&\blank&\blank\\
					\blank&\blank&\blank&\blank&\blank
				\end{array}\right]$}
				edge from parent node[left] {$\vartheta_{1, 3} \gets 0$};
			}
			child {
				node {$\left[\begin{array}{*5{@{}wc{2mm}@{}}}
					\blank&1&1&\blank&\blank\\
					\blank&\blank&0&\blank&\blank\\
					\blank&\blank&\blank&\blank&\blank
				\end{array}\right]$}
				edge from parent node[right] {$\vartheta_{1, 3} \gets 1$};
			}
			edge from parent node[right] {$\vartheta_{1, 2} \gets 1$};
		}
		edge from parent node[left] {$\vartheta_{2, 3} \gets 0$};
	}
	child {
		node {$\left[\begin{array}{*5{@{}wc{2mm}@{}}}
			\blank&\blank&\blank&\blank&\blank\\
			\blank&\blank&     1&\blank&\blank\\
			\blank&\blank&\blank&\blank&\blank
		\end{array}\right]$}
		child{
			node {$\left[\begin{array}{*5{@{}wc{2mm}@{}}}
				\blank&0&\blank&\blank&\blank\\
				\blank&\blank&1&\blank&\blank\\
				\blank&\blank&\blank&\blank&\blank
			\end{array}\right]$}
			child {
				node {$\left[\begin{array}{*5{@{}wc{2mm}@{}}}
					\blank&0&0&\blank&\blank\\
					\blank&\blank&1&\blank&\blank\\
					\blank&\blank&\blank&\blank&\blank
				\end{array}\right]$}
				edge from parent node[left] {$\vartheta_{1, 3} \gets 0$};
			}
			child {
				node {$\left[\begin{array}{*5{@{}wc{2mm}@{}}}
					\blank&0&1&\blank&\blank\\
					\blank&\blank&1&\blank&\blank\\
					\blank&\blank&\blank&\blank&\blank
				\end{array}\right]$}
				edge from parent node[right] {$\vartheta_{1, 3} \gets 1$};
			}
			edge from parent node[left] {$\vartheta_{1, 2} \gets 0$};
		}
		child{
			node {$\left[\begin{array}{*5{@{}wc{2mm}@{}}}
				\blank&1&\blank&\blank&\blank\\
				\blank&\blank&1&\blank&\blank\\
				\blank&\blank&\blank&\blank&\blank
			\end{array}\right]$}
			child {
				node {$\left[\begin{array}{*5{@{}wc{2mm}@{}}}
					\blank&1&0&\blank&\blank\\
					\blank&\blank&1&\blank&\blank\\
					\blank&\blank&\blank&\blank&\blank
				\end{array}\right]$}
				edge from parent node[left] {$\vartheta_{1, 3} \gets 0$};
			}
			child {
				node {$\left[\begin{array}{*5{@{}wc{2mm}@{}}}
					\blank&1&1&\blank&\blank\\
					\blank&\blank&1&\blank&\blank\\
					\blank&\blank&\blank&\blank&\blank
				\end{array}\right]$}
				edge from parent node[right] {$\vartheta_{1, 3} \gets 1$};
			}
			edge from parent node[right] {$\vartheta_{1, 2} \gets 1$};
		}
		edge from parent node[right] {$\vartheta_{2, 3} \gets 1$};
	}
;
\end{tikzpicture}
\caption{Branch-and-bound tree for the NDB problem with pruned nodes by 
isomorphism pruning.}
\label{fig:branching:isomorphismpruning}
\end{figure}

Note that isomorphism pruning is a pruning method,
which means that it does not find reductions. 
However, isomorphism pruning can be enhanced by fixing rules
that allow to find additional variable fixings early on in the
branch-and-bound tree as we discuss next.

\subsubsection{Orbital fixing} 
\label{sec:orbitalfixing}
Orbital fixing (\orbitalfixing) refers to a family of variable domain
reductions (\VDRs), whose common ground is to fix variables within orbits of
already fixed variables.
The exact definition of \orbitalfixing differs between different 
authors~\cite{Margot2003,OstrowskiEtAl2011}.
The main difference is whether fixings found by branching decisions
are distinguished from fixings found by orbital fixing.
We describe the variant~\cite[Theorem~3]{OstrowskiEtAl2011}, which is
compatible with isomorphism pruning.
Let~$\beta \in \bnbnodes$.
The group consisting of all permutations that stabilize the~1-branchings
up to node~$\beta$ is denoted by~$\Delta^\beta \define \stab{\Gamma}{B_1^\beta}
\define \{ \gamma \in \Gamma : \gamma(B_1^\beta) = B_1^\beta \}$.
\begin{theorem}[Orbital Fixing]
  Let~$\beta \in \bnbnodes$.
  If~$i \in B_0^\beta$, all variables in the $\Delta^\beta$-orbit
  of~$i$ can be fixed to~0.
\end{theorem}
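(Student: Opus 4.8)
The plan is to show that every variable in the $\Delta^\beta$-orbit of an index $i \in B_0^\beta$ can be fixed to $0$ without removing any lexicographically maximal representative of a feasible solution from the subtree rooted at $\beta$. The key observation is that the branch-and-bound subtree at $\beta$ only needs to retain, from each $\Gamma$-orbit, the solutions that are lexicographically maximal among those compatible with the branching decisions $(B_0^\beta, B_1^\beta)$; this is exactly the spirit of isomorphism pruning as in Theorem~\ref{thm:isoprune}. So I would argue: if $x \in \feasregion(\beta)$ has $x_j = 1$ for some $j$ in the $\Delta^\beta$-orbit of $i$, then $x$ is \emph{not} lexicographically maximal in its $\Gamma$-orbit subject to the constraints defining $\feasregion(\beta)$, hence can be discarded, hence $x_j$ may be fixed to $0$.

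First I would fix $i \in B_0^\beta$, let $j = \delta(i)$ for some $\delta \in \Delta^\beta$, and suppose for contradiction that $x \in \feasregion(\beta)$ has $x_j = 1$. I would then look at $x' \define \delta^{-1}(x)$, which is again a feasible solution of the original problem with the same objective value since $\delta \in \Gamma$. The point is to compare $x$ and $x'$ lexicographically and to check that $x'$ still lies in $\feasregion(\beta)$. For the second part: since $\delta$ stabilizes $B_1^\beta$ setwise and $\feasregion \subseteq \binary^n$, the $1$-entries of $x$ on $B_1^\beta$ are preserved, so $x'$ also has $x'_k = 1$ for all $k \in B_1^\beta$; one must additionally check $x'_k = 0$ for $k \in B_0^\beta$, which is where the hypothesis $i \in B_0^\beta$ and the structure of the branching order come into play — the earliest-branched variables force agreement. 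For the lexicographic comparison: because $x_i = 0$ (as $i \in B_0^\beta$ and $x \in \feasregion(\beta)$) while $x'_i = x_{\delta(i)} = x_j = 1$, and all indices in $B_0^\beta \cup B_1^\beta$ that are branched strictly before $i$ agree between $x$ and $x'$ (they are determined by the branching decisions, which $\delta$ respects), we get $x' \succ x$ in the variable order used for the lexicographic comparison, contradicting that $x$ is the retained representative.

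The main obstacle I anticipate is making precise the claim that indices branched \emph{before} $i$ are left untouched by $\delta$ in a way that actually yields $x' \succeq x$ with the relevant strict inequality at position $i$ — this needs a careful alignment between the ordering of variables used to define the lexicographic order, the order in which branching decisions are made, and the fact that $\Delta^\beta$ only stabilizes $B_1^\beta$ (not $B_0^\beta$ pointwise). One has to verify that $\delta$ maps each $1$-branched index occurring before $i$ to another such index occurring before it, and similarly that $\delta^{-1}(x)$ does not accidentally violate a $0$-branching that comes before $i$; the cleanest route is probably to invoke the isomorphism-pruning machinery of Theorem~\ref{thm:isoprune} with $\gamma = \delta$ (or $\delta^{-1}$) directly, so that the lexicographic bookkeeping is already done there, and then simply observe that the node one would "prune" in that argument is precisely the child of $\beta$ obtained by branching $x_j \to 1$. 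Finally, I would note that the argument is symmetric over all $\delta \in \Delta^\beta$, so the whole $\Delta^\beta$-orbit of $i$ is fixed to $0$, completing the proof.
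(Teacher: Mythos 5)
Your overall strategy coincides with the paper's: the paper also proves this theorem by showing that any $\bar x$ feasible at $\beta$ with $\bar x_j = 1$ for some $j$ in the $\Delta^\beta$-orbit of a $0$-branched index violates the node's modified \SHCs{} $\restrict{\pi_\beta(x)}{[m]} \succeq \restrict{\pi_\beta(\gamma(x))}{[m]}$ for a witness $\gamma \in \Delta^\beta$ carrying $j$ to that index, and hence is not the retained representative of its $\Gamma$-orbit (this is the second lemma of Section~\ref{sec:isopruneRevisited}). However, the one step you leave open is exactly the step that carries the proof, and your intermediate claim that ``all indices branched strictly before $i$ agree between $x$ and $x'$'' is false as stated: $\Delta^\beta$ stabilizes $B_1^\beta$ only setwise and places no constraint on $B_0^\beta$, so for a $0$-branched index $k$ preceding $i$ you only know $x_k = 0$, while $x'_k = x_{\delta(k)}$ may well equal $1$. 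You correctly flag this as the main obstacle, but gesturing at Theorem~\ref{thm:isoprune} does not resolve it, since that theorem's hypothesis is precisely the lexicographic inequality you still have to establish.

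There are two short ways to close the gap. The paper's way is to choose the violated $0$-branching minimally in the branching order: if $j$ is the smallest position with $i_j \in B_0^\beta$ whose $\Delta^\beta$-orbit contains a variable at value $1$, then for every earlier position $k<j$ with $i_k \in B_0^\beta$ the preimage $\gamma^{-1}(i_k)$ again lies in the $\Delta^\beta$-orbit of $i_k$, so minimality forces $\bar x_{\gamma^{-1}(i_k)} = 0 = \bar x_{i_k}$; combined with the setwise stabilization of $B_1^\beta$ this yields genuine agreement on the first $j-1$ positions and a strict increase at position $j$. Alternatively, and this salvages your argument without any reordering, observe that on every branched coordinate one has componentwise domination: $x'_k = x_{\delta(k)} = 1 = x_k$ for $k \in B_1^\beta$ and $x'_k \geq 0 = x_k$ for $k \in B_0^\beta$, with strict inequality at $k=i$; hence the first branched coordinate at which $x$ and $x'$ differ necessarily favors $x'$, giving $\restrict{\pi_\beta(x)}{[m]} \prec \restrict{\pi_\beta(x')}{[m]}$ no matter what happens at the earlier $0$-branched positions. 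Either repair is two lines, but without one of them the proposal does not yet yield the required strict lexicographic inequality. Finally, your side concern about verifying $x' \in \feasregion(\beta)$ is unnecessary on this route: the \SHCs{} quantify over all $\gamma \in \Gamma$ irrespective of whether $\gamma(x)$ is feasible at the node.
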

Figure~\ref{fig:branching:orbitalfixing} shows the branch-and-bound
tree for applying this orbital fixing rule to the running example.
Note that, if up to node $\beta$ no variables are branched to one,
$\Delta^\beta$ corresponds to the symmetry group $\Gamma$.
This means that for zero-branchings its whole orbit of $\Gamma$ 
(the corresponding row in $\vartheta$)
can be fixed to zero.
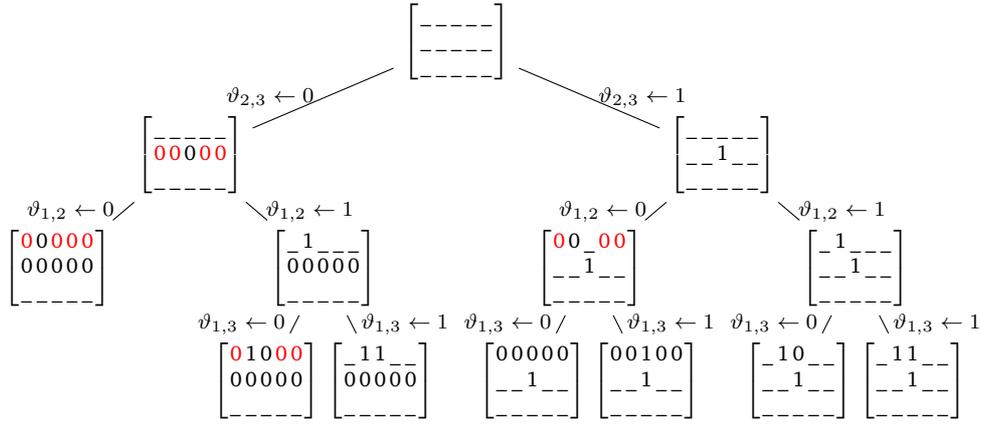
\begin{figure}[!tbp]
\centering
\begin{tikzpicture}[
	x=1cm, y=1cm, level distance=1.5cm,
	level 1/.style={sibling distance=7cm},
	level 2/.style={sibling distance=3.5cm},
	level 3/.style={sibling distance=1.5cm},
	font=\footnotesize
]

\node {$\left[\begin{array}{*5{@{}wc{2mm}@{}}}
		\blank&\blank&\blank&\blank&\blank\\
		\blank&\blank&\blank&\blank&\blank\\
		\blank&\blank&\blank&\blank&\blank
	\end{array}\right]$}
	child {
		node {$\left[\begin{array}{*5{@{}wc{2mm}@{}}}
			\blank&\blank&\blank&\blank&\blank\\
			\red 0&\red 0&     0&\red 0&\red 0\\
			\blank&\blank&\blank&\blank&\blank
		\end{array}\right]$}
		child {
			node {$\left[\begin{array}{*5{@{}wc{2mm}@{}}}
				\red 0&0&\red 0&\red 0&\red 0\\
				     0&     0&0&     0&     0\\
				\blank&\blank&\blank&\blank&\blank
			\end{array}\right]$}
			edge from parent node[left] {$\vartheta_{1, 2} \gets 0$};
		}
		child {
			node {$\left[\begin{array}{*5{@{}wc{2mm}@{}}}
				\blank&1&\blank&\blank&\blank\\
				     0&     0&0&     0&     0\\
				\blank&\blank&\blank&\blank&\blank
			\end{array}\right]$}
			child {
				node {$\left[\begin{array}{*5{@{}wc{2mm}@{}}}
					\red 0&     1&     0&\red 0&\red 0\\
					     0&     0&     0&     0&     0\\
					\blank&\blank&\blank&\blank&\blank
				\end{array}\right]$}
				edge from parent node[left] {$\vartheta_{1, 3} \gets 0$};
			}
			child {
				node {$\left[\begin{array}{*5{@{}wc{2mm}@{}}}
					\blank&     1&     1&\blank&\blank\\
					     0&     0&     0&     0&     0\\
					\blank&\blank&\blank&\blank&\blank
				\end{array}\right]$}
				edge from parent node[right] {$\vartheta_{1, 3} \gets 1$};
			}
			edge from parent node[right] {$\vartheta_{1, 2} \gets 1$};
		}
		edge from parent node[left] {$\vartheta_{2, 3} \gets 0$};
	}
	child {
		node {$\left[\begin{array}{*5{@{}wc{2mm}@{}}}
			\blank&\blank&\blank&\blank&\blank\\
			\blank&\blank&     1&\blank&\blank\\
			\blank&\blank&\blank&\blank&\blank
		\end{array}\right]$}
		child{
			node {$\left[\begin{array}{*5{@{}wc{2mm}@{}}}
				\red 0&     0&\blank&\red 0&\red 0\\
				\blank&\blank&     1&\blank&\blank\\
				\blank&\blank&\blank&\blank&\blank
			\end{array}\right]$}
			child {
				node {$\left[\begin{array}{*5{@{}wc{2mm}@{}}}
					     0&     0&     0&     0&     0\\
					\blank&\blank&     1&\blank&\blank\\
					\blank&\blank&\blank&\blank&\blank
				\end{array}\right]$}
				edge from parent node[left] {$\vartheta_{1, 3} \gets 0$};
			}
			child {
				node {$\left[\begin{array}{*5{@{}wc{2mm}@{}}}
					0&0&1&0&0\\
					\blank&\blank&1&\blank&\blank\\
					\blank&\blank&\blank&\blank&\blank
				\end{array}\right]$}
				edge from parent node[right] {$\vartheta_{1, 3} \gets 1$};
			}
			edge from parent node[left] {$\vartheta_{1, 2} \gets 0$};
		}
		child{
			node {$\left[\begin{array}{*5{@{}wc{2mm}@{}}}
				\blank&1&\blank&\blank&\blank\\
				\blank&\blank&1&\blank&\blank\\
				\blank&\blank&\blank&\blank&\blank
			\end{array}\right]$}
			child {
				node {$\left[\begin{array}{*5{@{}wc{2mm}@{}}}
					\blank&1&0&\blank&\blank\\
					\blank&\blank&1&\blank&\blank\\
					\blank&\blank&\blank&\blank&\blank
				\end{array}\right]$}
				edge from parent node[left] {$\vartheta_{1, 3} \gets 0$};
			}
			child {
				node {$\left[\begin{array}{*5{@{}wc{2mm}@{}}}
					\blank&1&1&\blank&\blank\\
					\blank&\blank&1&\blank&\blank\\
					\blank&\blank&\blank&\blank&\blank
				\end{array}\right]$}
				edge from parent node[right] {$\vartheta_{1, 3} \gets 1$};
			}
			edge from parent node[right] {$\vartheta_{1, 2} \gets 1$};
		}
		edge from parent node[right] {$\vartheta_{2, 3} \gets 1$};
	}
;
\end{tikzpicture}
\caption{Branch-and-bound tree for the NDB problem with fixings by
orbital fixing.}
\label{fig:branching:orbitalfixing}
\end{figure}

Since~\cite{OstrowskiEtAl2011} does not distinguish variables fixed 
to~1 by branching or other decisions, $\Delta^\beta$ can be replaced by all
permutations that stabilize the variables that are fixed to~1 (opposed to
just branched to be~1).
Note that neither definition of~$\Delta^\beta$ contains the other, i.e.,
neither version of \orbitalfixing dominates the other in terms of the
number of fixings that can be found.
Another variant of \orbitalfixing that also finds $1$-fixings is
presented in~\cite{ostrowski2009symmetry}, see
also~\cite{pfetsch2019computational}.

\subsection{Further symmetry handling methods}

Liberti and Ostrowski~\cite{LibertiOstrowski2014} as well as
Salvagnin~\cite{Salvagnin2018} present symmetry handling inequalities that
can be derived from the Schreier-Sims table of group.
Further symmetry handling inequalities are described by Liberti~\cite{Liberti2012a}.
In contrast to the constraints from Section~\ref{sec:shcbin}, they are also able to
handle symmetries of non-binary variables; their symmetry handling effect
is limited though.
Another class of inequalities, so-called orbital conflict inequalities, have
been proposed in~\cite{LinderothEtAl2021}.
Moreover, symmetry handling inequalities for specific problem classes are
discussed, among others,
by~\cite{GhoniemSherali2011,Hojny2020,KaibelPfetsch2008,MendezDiazZabala2001,sherali2001models}.

Besides the propagation approaches discussed above, also tailored
complete algorithms that can handle special cyclic groups
exist~\cite{doornmalenhojny2022cyclicsymmetries}.
Moreover, Ostrowski~\cite{ostrowski2009symmetry} presents smallest-image
fixing, a propagation algorithm for binary variables.
Instead of exploiting symmetries in a propagation framework, symmetries can
also be handled by tailored branching
rules~\cite{OstrowskiEtAl2011,OstrowskiAnjosVannelli2015}.
Furthermore, orbital shrinking~\cite{FischettiLiberti2012} is a method that
handles symmetries by aggregating variables contained in a common orbit,
which results in a relaxation of the problem.
Finally, core points~\cite{Bodi2013,Herr2013,HerrRehnSchuermann2013} can be
used to restrict the feasible region of problems to a subset of solutions.
This latter approach does not coincide with lexicographically maximal
representatives.

\section{Unified framework for symmetry handling}
\label{sec:framework}

As the literature review shows, different symmetry handling methods use
different paradigms to derive symmetry reductions.
For instance, \SHCs remove symmetric solutions from the initial problem
formulation, whereas methods such as orbital fixing remove symmetric
solutions based on the branching history.
At first glance, these methods thus are not necessarily compatible.

To overcome this seeming incompatibility, we present a unified framework
for symmetry handling that easily allows to check whether symmetry handling
methods are compatible.
It turns out that, via our framework, isomorphism pruning and \orbitalfixing
can be made compatible with a variant of \lexfix.
Moreover, in contrast to many symmetry handling methods discussed in the
literature, our framework also applies to non-binary problems and is not
restricted to permutation symmetries.
Before we present our framework in Section~\ref{sec:theframework}, it will
be useful to first provide an interpretation of isomorphism pruning through
the lens of symmetry handling constraints.

\subsection{Isomorphism pruning and orbital fixing revisited}
\label{sec:isopruneRevisited}

Let~$\beta \in \bnbnodes$ be a node at depth~$m$ and let~$y$ be the
incidence vector of 1-branching decisions as described in
Section~\ref{sec:isomorphismpruning}.
Due to Theorem~\ref{thm:isoprune}, node~$\beta$ can be pruned by
isomorphism if solution vector~$y$ 
violates~$\restrict{\pi_\beta(x)}{[m]} \succeq
\restrict{\pi_\beta(\gamma(x))}{[m]}$ for some~$\gamma \in \Gamma$.
The latter condition looks very similar to classical \SHCs, however, there
are some differences:
the variable order is changed via~$\pi_\beta$, not all variables are
present in this constraint due to the restriction, and most importantly,
every node has a potentially different reordering and restriction.
Nevertheless, these modified \SHCs can be used to remove all symmetries
from a binary problem in the sense that, for every solution~$x$ of a binary
problem, there exists exactly one node of~$\bnbtree$ at depth~$n$ that
contains a symmetric counterpart of~$x$,
see~\cite[Thm.~4.5]{ostrowski2009symmetry}.

Based on the modified \SHCs, it is easy to show that isomorphism pruning
and orbital fixing are compatible, provided one can show that both methods
are compatible with the modified \SHCs.
\begin{lemma}
  Let~$\beta \in \bnbnodes$ be a node at depth~$m$.
  If~$\beta$ gets pruned by isomorphism, there is no~$x \in \binary^n$
  that is feasible for the subproblem at~$\beta$ and that satisfies
  $\restrict{\pi_\beta(x)}{[m]} \succeq
  \restrict{\pi_\beta(\gamma(x))}{[m]}$ for all~$\gamma \in \Gamma$.
\end{lemma}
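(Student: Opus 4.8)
The plan is to unwind the definition of isomorphism pruning and observe that the pruning certificate $\gamma$ produced by Theorem~\ref{thm:isoprune} directly contradicts the claimed modified \SHC. Concretely, suppose for contradiction that some $x \in \binary^n$ is feasible for the subproblem at $\beta$ and satisfies $\restrict{\pi_\beta(x)}{[m]} \succeq \restrict{\pi_\beta(\delta(x))}{[m]}$ for all $\delta \in \Gamma$; in particular for the specific $\gamma$ witnessing that $\beta$ is pruned, i.e.\ the $\gamma$ with $\restrict{\pi_\beta(y)}{[m]} \prec \restrict{\pi_\beta(\gamma(y))}{[m]}$. The first key step is to relate $x$ and $y$ on the coordinates that matter: since $x$ is feasible for the subproblem at $\beta$, it respects all branching decisions on the rooted path to $\beta$, so $x_i = 1$ for $i \in B_1^\beta$ and $x_i = 0$ for $i \in B_0^\beta$. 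Because $B_0^\beta \cup B_1^\beta = \{i_1,\dots,i_m\}$ and $y$ is by construction the incidence vector of $B_1^\beta$, we get $x_i = y_i$ for every $i \in \{i_1,\dots,i_m\}$.

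The second key step translates this agreement through the reordering $\pi_\beta$. Since $\pi_\beta(i_k) = k$ for $k \in [m]$, the map $\pi_\beta$ sends the coordinate set $\{i_1,\dots,i_m\}$ onto $[m]$. Recalling that $\pi_\beta$ acts by $\pi_\beta(x) = (x_{\pi_\beta^{-1}(j)})_{j=1}^n$, the $j$-th entry of $\pi_\beta(x)$ for $j \in [m]$ is $x_{\pi_\beta^{-1}(j)}$, and $\pi_\beta^{-1}(j) \in \{i_1,\dots,i_m\}$; hence $x_{\pi_\beta^{-1}(j)} = y_{\pi_\beta^{-1}(j)}$. Therefore $\restrict{\pi_\beta(x)}{[m]} = \restrict{\pi_\beta(y)}{[m]}$. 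The third step handles the right-hand side: I would argue that $\restrict{\pi_\beta(\gamma(x))}{[m]} \succeq \restrict{\pi_\beta(\gamma(y))}{[m]}$, or at least enough of a comparison to close the argument. Here one uses that $\gamma(x)$ and $\gamma(y)$ agree on every coordinate where $x$ and $y$ agree \emph{and} where a coordinate on which they differ is forced to be $0$ in $x$; since on $\{i_1,\dots,i_m\}$ we have full agreement $x=y$, and outside that set $y$ is identically $0$ while $x$ is in $\binary^n$, we get $\gamma(x) \geq \gamma(y)$ entrywise as $0/1$ vectors is not quite right coordinatewise but the needed lexicographic comparison follows because the first coordinate in $[m]$ where $\pi_\beta(\gamma(x))$ and $\pi_\beta(\gamma(y))$ differ must be one where $\gamma(x)$ has a larger value. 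Combining: $\restrict{\pi_\beta(x)}{[m]} = \restrict{\pi_\beta(y)}{[m]} \prec \restrict{\pi_\beta(\gamma(y))}{[m]} \preceq \restrict{\pi_\beta(\gamma(x))}{[m]}$, which contradicts $\restrict{\pi_\beta(x)}{[m]} \succeq \restrict{\pi_\beta(\gamma(x))}{[m]}$.

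I expect the main obstacle to be precisely the third step — going from the agreement of $x$ and $y$ on the branched coordinates to a clean comparison of $\restrict{\pi_\beta(\gamma(x))}{[m]}$ and $\restrict{\pi_\beta(\gamma(y))}{[m]}$, since $\gamma$ can move coordinates from outside $\{i_1,\dots,i_m\}$ (where $x$ and $y$ may disagree) into the window $[m]$ after applying $\pi_\beta$. The saving observation is that $y \leq x$ coordinatewise as binary vectors (they agree on $\{i_1,\dots,i_m\}$ and $y$ is zero elsewhere), and a permutation preserves coordinatewise order, so $\gamma(y) \leq \gamma(x)$ coordinatewise, hence $\pi_\beta(\gamma(y)) \leq \pi_\beta(\gamma(x))$ coordinatewise, and for binary vectors $u \leq v$ coordinatewise implies $\restrict{v}{[m]} \succeq \restrict{u}{[m]}$. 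That reduces the whole step to a one-line monotonicity fact about the lexicographic order on binary vectors. The remaining pieces (feasibility forcing $x_i = y_i$ on branched indices, and $\pi_\beta$ mapping the branched index set onto $[m]$) are immediate from the definitions recalled in Section~\ref{sec:isomorphismpruning}.
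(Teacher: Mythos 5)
Your proposal is correct and follows essentially the same route as the paper's proof: both establish $\restrict{\pi_\beta(x)}{[m]} = \restrict{\pi_\beta(y)}{[m]}$ from feasibility at $\beta$, and both close the argument via the componentwise inequality $y \leq x$, which is preserved under $\gamma$ and $\pi_\beta$ and implies $\restrict{\pi_\beta(\gamma(y))}{[m]} \preceq \restrict{\pi_\beta(\gamma(x))}{[m]}$, yielding the same chain $\restrict{\pi_\beta(x)}{[m]} = \restrict{\pi_\beta(y)}{[m]} \prec \restrict{\pi_\beta(\gamma(y))}{[m]} \preceq \restrict{\pi_\beta(\gamma(x))}{[m]}$. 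The hesitation in your third step is resolved exactly by the ``saving observation'' you state at the end, which is precisely the paper's argument.
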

\begin{proof}
  As in Section~\ref{sec:isomorphismpruning}, let~$y \in \binary^n$ be such
  that~$y_i = 1$ if and only if~$i \in B_1^\beta$.
  If \isomorphismpruning prunes~$\beta$, there is~$\gamma\in\Gamma$
  with~$\restrict{\pi_\beta(y)}{[m]} \prec \restrict{\pi_\beta(\gamma(y))}{[m]}$.
  As the first $m$ entries of $\pi_\beta(y)$ are branching variables 
  and the remaining entries are~0, we find~$\restrict{\pi_\beta(\gamma(x))}{[m]} 
  \geq \restrict{\pi_\beta(\gamma(y))}{[m]}$ 
  (componentwise) for each~$x \in \feasregion(\beta)$.
  Thus, $\restrict{\pi_\beta(x)}{[m]} = \restrict{\pi_\beta(y)}{[m]} 
  \prec \restrict{\pi_\beta(\gamma(y))}{[m]}
  \leq \restrict{\pi_\beta(\gamma(x))}{[m]}$,
  which means every solution in~$\feasregion(\beta)$
  violates~$\restrict{\pi_\beta(x)}{[m]} \succeq
  \restrict{\sigma_\beta(\gamma(x))}{[m]}$.
\end{proof}
\begin{lemma}
  Let~$\beta \in \bnbnodes$ be a node at depth~$m$.
  Every fixing found by~\orbitalfixing at node~$\beta$ is implied by
  $\restrict{\pi_\beta(x)}{[m]} \succeq
  \restrict{\pi_\beta(\gamma(x))}{[m]}$ for all~$\gamma \in \Gamma$.
\end{lemma}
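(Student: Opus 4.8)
The plan is to prove the natural reading of the statement: whenever orbital fixing fixes a variable $x_j$ to $0$ at $\beta$, every $x \in \feasregion(\beta)$ that satisfies $\restrict{\pi_\beta(x)}{[m]} \succeq \restrict{\pi_\beta(\gamma(x))}{[m]}$ for all $\gamma \in \Gamma$ already has $x_j = 0$. As in the previous lemma, feasibility of $x$ at $\beta$ is the implicit hypothesis, and it is genuinely needed: without the branching decisions the restricted constraint is satisfied by any $x$ whose branching coordinates are all $1$. By the definition of this variant of orbital fixing, such a fixing comes with an index $i \in B_0^\beta$ and a permutation $\delta \in \Delta^\beta \define \stab{\Gamma}{B_1^\beta}$ with $\delta(i) = j$. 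I would assume for contradiction that $x_j = 1$ and then exhibit a single group element, namely $\gamma \define \delta^{-1} \in \Gamma$, whose constraint is violated.

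First I would rewrite the constraint for $\gamma = \delta^{-1}$ coordinatewise. Let $i_1, \dots, i_m$ be the branching variables on the rooted path to $\beta$, so $\pi_\beta(i_k) = k$ and hence $\pi_\beta^{-1}(k) = i_k$ for $k \in [m]$. Unwinding $\gamma(x) = (x_{\gamma^{-1}(l)})_l$ yields $(\pi_\beta(x))_k = x_{i_k}$ and $(\pi_\beta(\delta^{-1}(x)))_k = x_{\delta(i_k)}$ for every $k \in [m]$; note that both expressions depend only on the prescribed values $\pi_\beta(i_k) = k$, not on the unspecified part of $\pi_\beta$. Since $x \in \feasregion(\beta)$, we have $x_{i_k} = 1$ if $i_k \in B_1^\beta$ and $x_{i_k} = 0$ if $i_k \in B_0^\beta$; in particular $x_i = 0$ because $i \in B_0^\beta$.

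Next I would locate the first coordinate at which $\restrict{\pi_\beta(x)}{[m]}$ and $\restrict{\pi_\beta(\delta^{-1}(x))}{[m]}$ differ. Choose $r \in [m]$ with $i_r = i$. The two vectors disagree at position $r$, since $(\pi_\beta(x))_r = x_i = 0$ while $(\pi_\beta(\delta^{-1}(x)))_r = x_{\delta(i)} = x_j = 1$. Hence $k^\ast \define \min\{\, k \in [m] : (\pi_\beta(x))_k \neq (\pi_\beta(\delta^{-1}(x)))_k \,\}$ is well defined and $k^\ast \le r$. I then claim $(\pi_\beta(x))_{k^\ast} < (\pi_\beta(\delta^{-1}(x)))_{k^\ast}$: if $i_{k^\ast} \in B_1^\beta$, then $\delta(i_{k^\ast}) \in B_1^\beta$ as well because $\delta$ stabilizes $B_1^\beta$ setwise, so both coordinates equal $1$, contradicting the choice of $k^\ast$; therefore $i_{k^\ast} \in B_0^\beta$, which gives $(\pi_\beta(x))_{k^\ast} = 0$, and since all variables are binary the disagreement forces $(\pi_\beta(\delta^{-1}(x)))_{k^\ast} = 1$. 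Consequently $\restrict{\pi_\beta(x)}{[m]} \prec \restrict{\pi_\beta(\delta^{-1}(x))}{[m]}$, contradicting that $x$ satisfies the constraint for $\gamma = \delta^{-1}$. Thus $x_j = 0$, which is the claimed implication.

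The argument is short once the notation is set up, and the step I would be most careful about is exactly that setup: keeping the composition order of $\pi_\beta$ and $\gamma$ straight so that $\gamma^{-1} = \delta$ is what appears after unwinding $\gamma(x)$, and checking that the relevant coordinates of $\pi_\beta(x)$ and $\pi_\beta(\gamma(x))$ are insensitive to the unspecified entries of $\pi_\beta$. Everything else reduces to the single observation that $\delta$ fixing $B_1^\beta$ setwise forbids a disagreement at a $1$-branching coordinate, so the first disagreement is at a $0$-branching coordinate, where the left-hand side is necessarily the smaller one.
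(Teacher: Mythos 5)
Your proof is correct and follows essentially the same route as the paper's: exhibit the single stabilizer element witnessing the orbit relation, use the fact that it stabilizes $B_1^\beta$ setwise to force the first disagreement between $\restrict{\pi_\beta(x)}{[m]}$ and $\restrict{\pi_\beta(\gamma(x))}{[m]}$ onto a $0$-branching coordinate, and conclude that the lexicographic constraint is violated there. The only cosmetic difference is that you locate the minimal disagreement position directly, whereas the paper minimizes over the branching index of the $0$-fixed variable whose orbit contains a $1$ and then shows coincidence of the two vectors up to that index; both yield the same contradiction.
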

\begin{proof}
  Assume \orbitalfixing is not compatible with~$\restrict{\pi_\beta(x)}{[m]} \succeq
  \restrict{\pi_\beta(\gamma(x))}{[m]}$, $\gamma \in \Gamma$.
  Then, there exists a node~$\beta \in \bnbnodes$, a solution~$\bar{x} \in
  \feasregion(\beta)$ that satisfies~$\restrict{\pi_\beta(\bar{x})}{[m]} \succeq
  \restrict{\pi_\beta(\gamma(\bar{x}))}{[m]}$ for all $\gamma \in \Gamma$, and
  an index~$j \in [m]$
  with $i_j \in B_0^\beta$
  such that~$\bar{x}_{\ell} = 1$ for some~$\ell$ in
  the~$\Delta^\beta$-orbit of~$i_j$.
  Suppose~$j$ is minimal.

  Since~$\ell$ is contained in the~$\Delta^\beta$-orbit of~$i_j$, there
  exists~$\gamma \in \Delta^\beta$ with~$\gamma(\ell) = i_j$.
  By definition of~$\Delta^\beta$, 
  for all~$k \in B_1^\beta$, $\gamma(k) \in B^\beta_1$.
  Moreover, $\pi_\beta(\gamma(\bar x))_k = \bar x_{\gamma^{-1}(i_k)} = 0$ 
  for all $k \in [j-1]$ with $i_k \in B_0^\beta$,
  because~$j$ is selected minimally.
  Consequently, since $B_0^\beta \cup B_1^\beta = [m]$ holds,
  $\pi_\beta(\bar x)$ and~$\pi_\beta(\gamma(\bar x))$
  coincide on the first~$j-1$
  entries, and~$1 = \bar x_\ell = \bar x_{\gamma^{-1}(i_j)} = 
  \pi_\beta(\gamma(\bar x))_j 
  > \pi_\beta(\bar x))_j = \bar x_{i_j} = 0$.
  That is, $\restrict{\pi_\beta(\bar x)}{[m]} 
  \prec \restrict{\pi_\beta(\gamma(\bar x))}{[m]}$,
  contradicting that~$\bar{x}$ satisfies all \SHCs.
  \orbitalfixing is thus compatible with the \SHCs.
\end{proof}

Isomorphism pruning and orbital fixing are thus compatible.
While isomorphism pruning can become active as soon as one can show that no
lexicographically maximal solution w.r.t.\ the modified \SHCs is feasible
at a node~$\beta$, orbital fixing might not be able to find all symmetry
related variable reductions.
\begin{example}\label{ex:OFweak}
Let~$\Gamma \leq \symmetricgroup{4}$ be generated by a cyclic
right shift, i.e., the non-trivial permutations in~$\Gamma$ are~$\gamma_1 =
(1,2,3,4)$, $\gamma_2 = (1,3)(2,4)$, and~$\gamma_3 = (1,4,3,2)$.
Consider the branch-and-bound tree in Figure~\ref{fig:exBB}.
At node~$\beta_0$, no reductions can be found by \orbitalfixing as
no proper shift fixes the $1$-branching variable~$x_3$.
For~$\gamma_1$, the SHC~$\restrict{\pi_{\beta_0}(x)}{[2]} \succeq
\restrict{\pi_{\beta_0}(\gamma(x))}{[2]}$ reduces to
$(x_3, x_4) \succeq (x_2, x_3)$.
Due to the variable bounds at~$\beta_0$, the constraint simplifies
to~$(1,0) \succeq (x_2,1)$.
This constraint is violated if $x_2$ has value~1,
so $x_2$ can be fixed to~0.
\end{example}

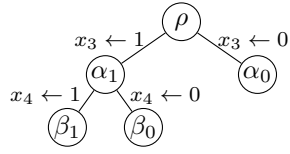
\begin{figure}[!tbp]
\centering
\begin{tikzpicture}[x=1cm, y=.7cm]

\node (b) at (0,0) [bbnode] {$\rho$};

\node (l) at (-1,-1) [bbnode] {$\alpha_1$};
\node (r) at (1,-1) [bbnode] {$\alpha_0$};

\node (ll) at (-1.5,-2) [bbnode] {$\beta_1$};
\node (lr) at (-0.5,-2) [bbnode] {$\beta_0$};

\draw[-] (b) to node [left, near start]
{\footnotesize{$x_3\gets1$}}  (l);
\draw[-] (b) to node [right,near start]
{\footnotesize{$x_3\gets0$}}  (r);
\draw[-] (l) to node [left, near start]
{\footnotesize{$x_4\gets1$}}  (ll);
\draw[-] (l) to node [right,near start]
{\footnotesize{$x_4\gets0$}}  (lr);
\end{tikzpicture}
\caption{Branch-and-bound tree for Example~\ref{ex:OFweak}.}
\label{fig:exBB}
\end{figure}

Consequently, symmetry handling by isomorphism pruning and orbital fixing
can be improved by identifying further symmetry handling methods that are
compatible with the modified \SHCs.

\subsection{The framework}
\label{sec:theframework}

In this section, we present our unified framework for symmetry handling
with the following goals:
It should
\begin{enumerate}[label={{(G\arabic*)}},ref={G\arabic*}]
\item\label{G1} allow to check whether different symmetry
  handling methods are compatible.
  In particular, it should ensure compatibility of \lexfix, isomorphism
  pruning, and \orbitalfixing.
\item\label{G2} generalize the modified \SHCs by
  Ostrowski~\cite{ostrowski2009symmetry}.
\item\label{G3} apply to general variable types and general symmetries (not
  necessarily permutations).
\end{enumerate}
To achieve these goals, we define a more general class of
\SHCs~$\sigma_\beta(x) \succeq \sigma_\beta(\gamma(x))$, where~$\gamma \in
\Gamma$ and~$\beta \in \bnbnodes$, that are not necessarily based on
branching decisions.

Let~$\feasregion \subseteq \R^n$ and let~$f\colon \feasregion \to \R$ be
such that~$\optval(f,\feasregion)$ can be solved by (spatial)
branch-and-bound.
Let~$\Gamma$ be a group of symmetries of~$\optval(f,\feasregion)$.
Let~$\bnbtree = (\bnbnodes, \bnbedges)$ be a branch-and-bound tree and
let~$\beta \in \bnbnodes$.
In our modified \SHCs, the map~$\sigma_\beta\colon \R^n \to \R^{m_\beta}$
will be parameterized via a permutation~$\pi_\beta
\in \symmetricgroup{n}$, a symmetry~$\varphi_\beta \in \Gamma$, and an
integer~$m_\beta \in \{0, \dots, n\}$ 
as~$\sigma_\beta(\cdot) 
\define \restrict{\left( 
\pi_\beta \circ \varphi_\beta (\cdot) 
\right)}{[m_\beta]}$.
As in Ostrowski's approach, $\pi_\beta$ selects a variable ordering
and~$m_\beta$ allows to restrict the \SHCs to a subset of variables.
In contrast to~\cite{ostrowski2009symmetry}, however, $\pi_\beta$ does not
necessarily correspond to the branching order.
Moreover, $\varphi_\beta$ provides more degrees of freedom as it allows to
change the variable order imposed by~$\pi_\beta$.
We refer to the structure~$(m_\beta, \pi_\beta, \varphi_\beta)_{\beta \in \bnbnodes}$ 
as a \emph{symmetry prehandling structure} for $\bnbtree$.
Note that this definition already achieves goal~\eqref{G2} by
setting~$\varphi_\beta = \id$, using the same~$\pi_\beta$ as in
Section~\ref{sec:isopruneRevisited}, and setting~$m_\beta$ to be the number
of different branching variables in node~$\beta$.

\begin{theorem}
  \label{thm:main}
  Let~$\feasregion \subseteq \R^n$ and let~$f\colon \feasregion \to \R$ be
  such that~$\optval(f,\feasregion)$ can be solved by (spatial)
  branch-and-bound.
  Let~$\Gamma$ be a finite group of symmetries of~$\optval(f,\feasregion)$.
  Suppose that the branch-and-bound method used for
  solving~$\optval(f,\feasregion)$ generates a finite full
  \bb-tree~$\bnbtree = (\bnbnodes, \bnbedges)$.
  For each node~$\beta \in \bnbnodes$,
  let~$(m_\beta,\pi_\beta,\varphi_\beta) \in [n]_0 \times \symmetricgroup{n}
  \times \Gamma$.
  Let~$\sigma_\beta(\cdot) = \restrict{(\pi_\beta \circ 
  \varphi_\beta(\cdot))}{[m_\beta]}$.
  Suppose that we enforce, for every~$\beta \in \bnbnodes$,
  \begin{equation}
    \label{eq:main}
    \sigma_\beta(x) \succeq \sigma_\beta (\gamma(x))\
    \text{for all}\
    \gamma \in \Gamma.
  \end{equation}
If~$(m_\beta,\pi_\beta,\varphi_\beta)$
satisfies so-called \emph{correctness conditions}~%
\eqreffromto{cond:ffunc}{cond:permutationcondition}
for all nodes~$\beta \in \bnbnodes$:
\begin{enumerate}[
label={{(C\arabic*)}},
ref={C\arabic*}, itemsep={.5em}
]
\item
\label{cond:ffunc}
If $\beta$ has a parent $\alpha \in \bnbnodes$,
then $m_\beta \geq m_\alpha$
and for all $i \leq m_\alpha$ and $x \in \R^n$
holds $\pi_\alpha(x)_i = \pi_\beta(x)_i$;

\item
\label{cond:symmetricallypermute}
If $\beta$ has a parent $\alpha \in \bnbnodes$,
then $\varphi_\beta =\varphi_\alpha \circ \psi_\alpha$
for some
\[
  \psi_\alpha \in \stab{\Gamma}{\feasregion(\alpha)}
  \define \{ \gamma \in \Gamma :
  \feasregion(\alpha) = \gamma(\feasregion(\alpha)) \};
\]

\item
\label{cond:sibl}
If $\beta$ has a sibling $\beta' \in \bnbnodes$,
then $m_\beta = m_{\beta'}$, $\pi_\beta = \pi_{\beta'}$,
and $\varphi_\beta = \varphi_{\beta'}$, i.e., $\psi_\alpha$ in \eqref{cond:symmetricallypermute} does not depend on $\beta$;

\item
\label{cond:permutationcondition}
If $\beta$ has a feasible solution $x \in \feasregion(\beta)$,
then for all permutations $\xi \in \Gamma$
with $\sigma_\beta(x) = \sigma_\beta(\xi(x))$
also the permuted solution $\xi(x)$ is feasible in $\feasregion(\beta)$;
\end{enumerate}
then, for each~$\tilde{x} \in \feasregion$,
there is exactly one leaf~$\nu$ of the \bb-tree con\-taining a solution
symmetric to~$\tilde{x}$, i.e., for which there is~$\xi \in \Gamma$
with~$\xi(\tilde{x}) \in \feasregion(\nu)$.
\end{theorem}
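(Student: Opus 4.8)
The plan is to prove the statement by induction on the branch-and-bound tree, establishing an invariant at each node $\beta$: for each $\tilde x \in \feasregion$, the number of leaves in the subtree rooted at $\beta$ containing a solution symmetric to $\tilde x$ that \emph{also satisfies all the \SHCs $\sigma_\beta(x) \succeq \sigma_\beta(\gamma(x))$ accumulated down to $\beta$} is exactly one if some symmetric copy of $\tilde x$ is feasible at $\beta$, and zero otherwise. The base case is a leaf: there, $\feasregion(\nu)$ is (essentially) a single solution or empty, and one must argue that exactly one element of the orbit of $\tilde x$ satisfies all the imposed \SHCs and lands in $\feasregion(\nu)$. The crux is that the combined family of constraints $\{\sigma_\beta(x) \succeq \sigma_\beta(\gamma(x)) : \gamma \in \Gamma\}$ over all $\beta$ on the root-to-leaf path, together with the branching decisions, should single out a unique orbit representative.

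First I would set up notation: fix $\tilde x$, let $O = \{\gamma(\tilde x) : \gamma \in \Gamma\}$ be its orbit, and track how $O$ is distributed over the subtrees. At an internal node $\alpha$ with children partitioning $\feasregion(\alpha)$, each element of $O \cap \feasregion(\alpha)$ lies in exactly one child's region. The key structural fact I would exploit is condition~\eqref{cond:symmetricallypermute}: since $\varphi_\beta = \varphi_\alpha \circ \psi_\alpha$ with $\psi_\alpha$ stabilizing $\feasregion(\alpha)$, the map $x \mapsto \psi_\alpha(x)$ permutes $O \cap \feasregion(\alpha)$ among itself, and it intertwines the \SHC at $\alpha$ with a relabeled version; combined with \eqref{cond:ffunc} (the first $m_\alpha$ coordinates of $\pi$ are frozen once and for all below $\alpha$), this lets me compare $\sigma_\beta$ and $\sigma_\alpha$ coherently. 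Condition~\eqref{cond:sibl} ensures that $\sigma_\beta$ is the same across all siblings, so the "lexicographically maximal surviving representative" is well-defined at the level of $\alpha$'s children collectively, and condition~\eqref{cond:permutationcondition} guarantees that if a solution $x$ at $\beta$ has a $\sigma_\beta$-tied symmetric partner $\xi(x)$, that partner is also feasible at $\beta$ — this is what prevents the unique representative from being "lost" by falling outside $\feasregion(\beta)$ while its tied twin stays inside.

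The inductive step then runs as follows. Assume the invariant at all children of $\alpha$. An element of $O$ survives the \SHCs down to $\alpha$ and is feasible there; I need to show exactly one child's subtree retains exactly one surviving symmetric copy. Among the (finitely many) elements of $O$ feasible at $\alpha$ and satisfying the \SHCs up to $\alpha$, pick the $\sigma_\alpha$-lexicographically maximal one — or rather, work with the constraint newly added at $\alpha$ together with those added at the children. The delicate point is that the \emph{new} constraint enforced at the children (via $\sigma_\beta$ with its larger $m_\beta$ and its $\psi_\alpha$-twisted $\varphi_\beta$) must, together with the partition of $\feasregion(\alpha)$ into children, remove precisely the non-representative copies without ever removing the last one. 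Using \eqref{cond:permutationcondition} one shows the representative is never pruned; using the fact that $\psi_\alpha$ moves $\feasregion(\alpha)$ to itself but permutes the children's regions among themselves, and that the $\sigma_\beta$ are consistently defined, one shows every non-representative copy either violates some \SHC or is accounted for in a sibling — never double-counted, because the children's regions are disjoint.

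\textbf{Expected main obstacle.} The hardest part will be handling the interaction between the reordering permutations $\varphi_\beta$ and the lexicographic comparison when $m_\beta$ grows strictly: I must verify that extending the comparison to more coordinates, while simultaneously twisting by $\psi_\alpha \in \stab{\Gamma}{\feasregion(\alpha)}$, does not create a situation where the previously-chosen representative at $\alpha$ fails the new \SHC at its child while some symmetric sibling-copy passes it — i.e., that the family of \SHCs is \emph{consistent} along root-to-leaf paths. This is exactly where conditions~\eqref{cond:ffunc} (coordinates already compared stay fixed) and \eqref{cond:permutationcondition} (ties don't leak out of $\feasregion(\beta)$) must be combined carefully; I anticipate the bulk of the proof is a lemma asserting that if $x \in \feasregion(\beta)$ satisfies \eqref{eq:main} at $\beta$ and $\beta$ has parent $\alpha$, then $\psi_\alpha(x)$ (or an appropriate orbit element) satisfies \eqref{eq:main} at $\alpha$, allowing the induction to push feasible representatives upward and downward coherently. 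Edge cases with $m_\beta = 0$ (empty \SHC), with spatial branching producing non-closed regions, and with $\psi_\alpha$ fixing some but not all children need to be checked but should be routine once the main consistency lemma is in place.
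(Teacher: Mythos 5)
Your overall strategy --- induction over the \bb-tree, selecting a lexicographically maximal surviving orbit element, using \eqref{cond:sibl} to make the comparison well defined across siblings, \eqref{cond:ffunc} and \eqref{cond:symmetricallypermute} to relate $\sigma_\beta$ to $\sigma_\alpha$, and \eqref{cond:permutationcondition} to handle ties --- is the same as the paper's, which argues top-down by induction on depth via a lemma (Lemma~\ref{lem:gen:main}) stating that a node admitting a feasible orbit element not cut off by~\eqref{eq:main} has \emph{exactly one child} admitting one. However, the invariant you propose to carry through your bottom-up induction is false as stated, and this is a genuine gap rather than a presentational one. You require the number of leaves below $\beta$ containing a symmetric copy of $\tilde x$ satisfying the accumulated \SHCs to equal one whenever \emph{some} symmetric copy of $\tilde x$ is feasible at $\beta$. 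Mere feasibility at $\beta$ does not guarantee that any copy feasible at $\beta$ survives the \SHCs: the representative selected by $\sigma_\beta(\cdot)\succeq\sigma_\beta(\gamma(\cdot))$ may lie in a sibling's region. Concretely, if the orbit of $\tilde x$ meets the regions of two siblings $\beta_1$ and $\beta_2$ and the (common, by \eqref{cond:sibl}) constraint selects the copy in $\feasregion(\beta_1)$, then the subtree of $\beta_2$ contains zero qualifying leaves although a symmetric copy is feasible at $\beta_2$; dually, applying your invariant to both siblings would yield two qualifying leaves below their parent. The base case at a leaf fails for the same reason. The invariant must be conditioned on the existence of a symmetric copy that is feasible at $\beta$ \emph{and} satisfies the \SHCs at $\beta$ and its ancestors.

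Once the invariant is corrected in this way, your plan essentially reconstructs the paper's proof, but note that the load-bearing statement is the parent-to-child direction --- given a surviving representative at $\beta$, exactly one child retains a (possibly re-permuted) surviving representative --- whereas the auxiliary lemma you single out as the expected crux is the child-to-parent consistency statement, which by itself does not deliver uniqueness across siblings. Two further small inaccuracies: you assert that $\psi_\alpha$ ``permutes the children's regions among themselves,'' but \eqref{cond:symmetricallypermute} only requires $\psi_\alpha$ to stabilize $\feasregion(\alpha)$ as a set, and the argument neither needs nor establishes that it respects the partition into children; and at a leaf $\nu$ the region $\feasregion(\nu)$ need not be a single point, so the base case must rely on \eqref{cond:permutationcondition} (tied copies are simultaneously feasible) rather than on $\feasregion(\nu)$ being ``essentially a single solution.''
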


Before we apply and prove this theorem, we interpret the correctness
conditions and provide some implications and consequences.
We start with the latter.

\begin{itemize}
\item Enforcing~\eqref{eq:main} handles
  symmetries by excluding feasible solutions from the search space while
  guaranteeing that exactly one representative solution per class of
  symmetric solutions remains feasible (recall that~$\bnbtree$ does not
  prune nodes by bound).
  Note that by enforcing~\eqref{eq:main}, symmetry reductions can only take
  place on variables ``seen'' by~$\sigma_\beta(x) \succeq
  \sigma_\beta(\gamma(x))$ for some~$\gamma\in\Gamma$.
  We stress that it is not immediate how~\eqref{eq:main} can be enforced
  efficiently.
  We will turn to this question in Section~\ref{sec:cast}.

\item If we prune nodes by bound, \eqref{eq:main} still can be used to
  handle symmetries.
  But not necessarily all~$x \in \feasregion$ have
  a symmetric counterpart feasible at some leaf (e.g., if~$x$ is
  suboptimal).

\item If not all constraints of type~\eqref{eq:main} are completely
enforced, we still find valid symmetry reductions, but not necessarily
exactly one representative solution.

\item If different symmetry handling methods can be expressed in terms
of~\eqref{eq:main} having the same choice of the symmetry prehandling 
structure~$(m_\beta, \pi_\beta, \varphi_\beta)_{\beta \in \bnbnodes}$,
then both symmetry handling methods can be applied at the same time,
i.e., they are \emph{compatible}.

\item In practice, \bnb is enhanced by cutting planes or domain
  propagation such as reduced cost fixing.
  Both also work in our framework if their reductions are
  \emph{symmetry compatible}, i.e., if, for~$\beta \in \bnbnodes$, the
  domain of a variable~$x_i$ is reduced, the same reduction can be applied
  to all symmetric variables w.r.t.\ symmetries at~$\beta$.
  Margot~\cite[Section~4]{Margot2003} discusses this in detail for
  \isomorphismpruning. He refers to this as \emph{strict setting
    algorithms}.

\item For spatial branch-and-bound, the children of a node~$\alpha$ do not
  necessarily partition~$\feasregion(\alpha)$ (the feasible regions of
  children can overlap on their boundary).
  In this case, \eqref{eq:main} can still be used to handle symmetries, but
  there might exist several leaves containing a symmetrically equivalent
  solution.
\end{itemize}

\begin{remark}
\label{remark:improper}
As propagating \SHCs cuts off feasible solutions, 
such propagations are not symmetry-compatible.
Therefore, we consider \SHC reductions in our framework as special
branching decisions, called \emph{improper}:
For a \SHC reduction $C \subseteq \R^n$ 
at node~$\beta \in \bnbnodes$,
two children $\omega, \omega'$ are introduced
with $\feasregion(\omega) = \feasregion(\beta) \cap C$
and~
$\feasregion(\omega') = \feasregion(\beta) \setminus \feasregion(\omega)$.
Node $\omega'$ can then be pruned by symmetry.
Complementing this, traditional (standard) branching decisions are called 
\emph{proper}.
\end{remark}

\paragraph{Interpretation}
Theorem~\ref{thm:main} iteratively builds
\SHCs~$\sigma_\beta(x) \succeq \sigma_\beta(\gamma(x))$ that do not
necessarily build upon a common lexicographic order for different
nodes~$\beta \in \bnbnodes$.
The map~${\sigma_\beta(\cdot) = \restrict{(\pi_\beta \circ 
\varphi_\beta(\cdot))}{[m_\beta]}}$
accepts an~$n$-dimensional vector,
considers a symmetrically equivalent representative solution hereof
($\varphi_\beta$),
reorders its entries ($\pi_\beta$), and afterwards restricts
them to the first~$m_\beta$ coordinates.
This way, $\sigma_\beta$ selects~$m_\beta$ expressions (and their images) 
that
appear in the \SHCs~\eqref{eq:main}.
To ensure that consistent \SHCs are derived, sufficient information
needs to be inherited to a node's children in the \bb-tree, which is
achieved as follows.

For the ease of explanation, let us first assume~$\varphi_\beta$ is the
identity~$\id$.
Then, \eqref{cond:ffunc} guarantees that a child has not less
information than its parent.
Moreover, siblings must not be too different, i.e., new information at one
child also needs to be known to its siblings~\eqref{cond:sibl}.
\eqref{cond:permutationcondition} ensures that if two 
solutions~$x$ and~$\xi(x)$ appear identical for the \SHCs in
the sense~$\sigma_\beta(x) = \sigma_\beta(\xi(x))$, feasibility of~$x$
should imply feasibility of~$\xi(x)$.
In other words, if $x$ and $\xi(x)$ 
are identical with respect to $\sigma_\beta$,
it may not be that one solution is feasible at $\beta$ 
while the other solution is not.

Conditions~\eqref{cond:ffunc},
\eqref{cond:sibl}, and~\eqref{cond:permutationcondition}
describe how $\sigma_\beta(x)$ ``grows'' as nodes $\beta$ follow a rooted path,
and that siblings are handled in the same way.
If $\varphi_\beta = \id$, 
for a node $\beta$ with ancestor $\mu$
all variables and expressions of $\sigma_\mu(x)$
also occur in the first $m_\mu$ elements of $\sigma_\beta(x)$.
Condition~\eqref{cond:symmetricallypermute} allows for more 
flexibility in this.
Let $\alpha$ be the parent of $\beta$.
If there is a symmetry $\gamma \in \Gamma$
that leaves the feasible region of $\alpha$ invariant
(i.e., $\feasregion(\alpha) = \gamma(\feasregion(\alpha))$),
one can choose to handle the symmetries considering the
symmetrically equivalent solution space
as of node $\beta$. 
This degree of freedom might help a solver to find more symmetry reductions
in comparison to just ``growing'' the considered representatives.
For example, in Figure~\ref{fig:branching:orbitopalfixing}
at node $\alpha$ 
with~$(\vartheta_{2,3},\vartheta_{1,2},\vartheta_{1,3}) \gets (0,1,0)$
the feasible region $\feasregion(\alpha)$
is identical when permuting the first two columns
or the last three columns.
Suppose that one branches next on variable $\vartheta_{3,3}$,
then the zero-branch will find two reductions
(namely $\vartheta_{3,4},\vartheta_{3,5} \gets 0$)
and the one-branch will find no reductions.
If the solver has a preference to reduce the discrepancy between the number
of reductions found over the siblings, 
one could exchange column 3 and 4 for the sake of symmetry handling.
Effectively, this moves the branching variable
to the fourth column. Applying orbitopal fixing on the matrix where 
these columns are exchanged leads to one fixing in either child.

\paragraph{Examples}
Let~$\bnbtree = (\bnbnodes, \bnbedges)$ be a \bb-tree, in which each
branching decision partitions the domain of exactly one variable.
We will show that there are many possible symmetry prehandling structures%
~$(m_\beta, \pi_\beta, \varphi_\beta)_{\beta \in \bnbnodes}$ 
that satisfy the correctness conditions of Theorem~\ref{thm:main}.
Hence, this gives many degrees of freedom to handle symmetries.
In the following, we discuss choices that resemble three  symmetry handling
techniques: static \SHCs, Ostrowksi's branching variable ordering, and a
variant of orbitopal fixing that 
is more flexible than the setting of Bendotti~\etal.

\begin{example}[Static \SHCs]
  \label{ex:staticsetting}
  The static \SHCs~$x \succeq \gamma(x)$ for all~$\gamma \in \Gamma$ can be
  derived in our framework by setting, for each~$\beta \in \bnbnodes$, the
  parameters~$m_\beta = n$, $\pi_\beta = \varphi_\beta = \psi_\beta = \id$.
  \eqreffromto{cond:ffunc}{cond:sibl} are satisfied trivially.
  As any~$x \in \R^n$ satisfies
  $\sigma_\beta(x) = \restrict{\pi_\beta \varphi_\beta(x)}{[n]} = x$,
  we find $\sigma_\beta(x) = \sigma_\beta \gamma(x)$
  if and only if~$x = \gamma(x)$. Hence, also
  \eqref{cond:permutationcondition} holds.
\end{example}

Next, we resemble Ostrowski's rank for binary variables and generalize
it to arbitrary variable types.
In the latter case, only considering the branching order
is not sufficient as one might branch several times on the
same variable.

\begin{example}[Branching-based]
  \label{ex:vardynamic}
  Let~$\beta \in \bnbnodes$.
  If~$\beta$ is the root node, let~$m_\beta = 0$, i.e., $\sigma_\beta$ is 
  void.
  Otherwise, let~$\alpha$ be the parent of~$\beta$.
  If~$\beta$ arises from~$\alpha$ by a proper branching decision on
  variable~$x_{\ihat}$ and~$\ihat$ has not been used for branching
  before, i.e., $\ihat \notin (\pi_\beta \varphi_\beta)^{-1}([m_\beta])$,
  then set~$m_\beta = m_\alpha + 1$, $\varphi_\beta = \id$ and
  select~$\pi_\beta \in \symmetricgroup{n}$ with~$\pi_\beta(i) =
  \pi_\alpha(i)$ for $i \leq m_\alpha$ and $\pi_\beta(m_\beta) = \ihat$.
  Otherwise, inherit the symmetry prehandling structure from~$\alpha$,
  i.e.,
  $\pi_\beta = \pi_\alpha$, $m_\beta = m_\alpha$, and $\varphi_\beta = 
  \id$.
\end{example}

\begin{proof}[Example~\ref{ex:vardynamic} satisfies 
\eqreffromto{cond:ffunc}{cond:permutationcondition}]
\eqreffromto{cond:ffunc}{cond:sibl} hold trivially.
To show~\eqref{cond:permutationcondition},
let~$x \in \feasregion(\beta)$
and $\xi \in \Gamma$
such that~$\sigma_\beta(x) = \sigma_\beta(\xi(x))$.
By definition of~$(m_\beta,\pi_\beta,\varphi_\beta)$, $\sigma_\beta(x)$
restricts~$x$ onto all (resorted) variables used for branching up to
node~$\beta$.
To show~\eqref{cond:permutationcondition}, note that the
feasible region $\feasregion(\beta)$ is the intersection
of
\begin{enumerate*}[label={(\roman*)},
ref={(\roman*)}]
\item \label{ex:vardynamic:pr:1}
$\feasregion$,
\item \label{ex:vardynamic:pr:2}
proper branching decisions, and
\item \label{ex:vardynamic:pr:3}
symmetry reductions due to~\eqref{eq:main}.
\end{enumerate*}

It is thus sufficient to show~$\xi(x)$ is contained in each of these
sets.
Since~$\xi$ is a problem symmetry and~$x \in \feasregion$, also~$\xi(x) \in
\feasregion$.
Moreover, as all branching variables are represented in~$\sigma_\beta$
and~$x$ respects the branching decisions, $\sigma_\beta(x) =
\sigma_\beta(\xi(x))$ implies that~$\xi(x)$ satisfies the branching
decisions.
Thus, \ref{ex:vardynamic:pr:1} and~\ref{ex:vardynamic:pr:2} hold.
Finally, the \SHCs~\eqref{eq:main} for~$\beta$ dominate the \SHCs
for its ancestors~$\alpha$ since~\eqref{cond:ffunc}
and~$\varphi_\alpha = \id$ hold, i.e, if~$\xi(x)$ satisfies the \SHCs
for~$\beta$, then also all previous \SHCs.
As~$\Gamma \circ \xi = \Gamma$, each~$\gamma \in \Gamma$ can be written
as~$\gamma' \circ \xi$ for some~$\gamma' \in \Gamma$.
Therefore, for all~$\gamma' \in \Gamma$, we conclude
$\sigma_\beta(\xi(x)) = \sigma_\beta(x) \succeq \sigma_\beta(\gamma(x)) =
\sigma_\beta(\gamma'(\xi(x)))$, i.e.,
\ref{ex:vardynamic:pr:3} and thus~\eqref{cond:permutationcondition} holds.
\end{proof}

By adapting the variable order used in \lexfix to the order imposed
by~$\sigma_\beta$, \lexfix is thus compatible with isomorphism pruning and
\orbitalfixing, i.e., the framework achieves goal~\eqref{G1}.
In particular, the statement is true for non-binary problems if these
methods can be generalized to arbitrary variable domains.
We will discuss this in more detail in the next section.

The last symmetry prehandling structure accommodates orbitopal fixing.
Bendotti \etal~\cite{BendottiEtAl2021} already discussed a dynamic variant
of orbitopal fixing, which reorders the rows of the orbitope matrix similar
to Ostrowski's rank; columns, however, are not reordered.
As described above, allowing also column reorderings might lead to more
balanced branch-and-bound trees, which can be achieved as follows.

\begin{example}[Specialized for orbitopal fixing]
  \label{ex:orbitopalfixing}
  Let $M$ be the $p \times q$ orbitope matrix corresponding to the problem
  variables via $M_{i,j} = x_{q(i-1) + j}$.
  That is, $x$ is filled row-wise with the entries of~$M$.
  Let~$\beta \in \bnbnodes$.
  If~$\beta$ is the root node, define $(m_\beta, \pi_\beta, \varphi_\beta) 
  =
  (0, \id, \id)$.
  Otherwise, let~$\alpha$ be the parent of~$\beta$.
  If~$\beta$ arises from~$\alpha$ by a proper branching decision on
  variable~$M_{\ihat,\hat{\jmath}}$ and no variable in the~$\ihat$-th row 
  has been
  used for branching  before,
  set $m_\beta = m_\alpha + q$, select~$\pi_\beta \in \symmetricgroup{n}$ 
  with
  $\pi_\beta(k) = \pi_\alpha(k)$ for~$k \in [m_\alpha]$, 
  and, for~$k \in [q]$, define $\pi_\beta(m_\alpha + k) = q(\ihat-1) + k$.
  Also choose 
  $\psi_\alpha \in \stab{\Gamma}{\feasregion(\alpha)}$
  yielding $\varphi_\beta = \varphi_\alpha\circ \psi_\alpha$.
  Consistent with Condition~\eqref{cond:sibl}, 
  the choice of $\psi_\alpha$ is the same for all children
  sharing the same parent $\alpha$.
  If the variable is already included in the variable ordering
  or if the branching decision is improper, 
  inherit~$(m_\beta, \pi_\beta, \varphi_\beta) 
  = (m_\alpha, \pi_\alpha, \varphi_\alpha)$.
  Effectively, this creates a new matrix in which the rows are sorted based
  on branching decisions and columns can be permuted as long as this does
  not affect symmetrically feasible solutions.  
\end{example}

Completely handling \SHCs~\eqref{eq:main} on $\beta$ corresponds to 
using orbitopal fixing on the $(m_\beta / q) \times q$-matrix 
filled row-wise with the variables 
with indices in~$(\pi_\beta \varphi_\beta)^{-1}(i)$ 
for~$i \in [m_\beta]$.
Bendotti \etal~\cite{BendottiEtAl2021} introduce this 
without the freedom of permuting the matrix columns, 
i.e., for all $\beta \in \bnbnodes$ they choose~$\varphi_\beta = \id$.
We call their setting \emph{row-dynamic}, wheres we refer to our setting as
row- and column-dynamic.

\begin{proof}[Example~\ref{ex:orbitopalfixing} satisfies 
  \eqreffromto{cond:ffunc}{cond:permutationcondition}]
  Obviously, \eqreffromto{cond:ffunc}{cond:sibl} hold.
  To show~\eqref{cond:permutationcondition}, we use induction.
  As~\eqref{cond:permutationcondition} holds at the root node, the 
  induction base holds.
  So, assume~\eqref{cond:permutationcondition} holds at node $\alpha$ 
  with child~$\beta$~{\itshape (IH)}.
  We show \eqref{cond:permutationcondition} also holds at $\beta$.

  Let $x \in \feasregion(\beta)$ and $\xi \in \Gamma$
  with $\sigma_\beta(x) = \sigma_\beta(\xi(x))$.
  To show~$\xi(x) \in \feasregion(\beta)$, we distinguish if the branching
  decision from $\alpha$ to $\beta$ is proper or not.
  Note that both proper and improper branching decisions only happen on
  variables present in~$\sigma_\beta$ by construction
  of~$(m_\beta,\pi_\beta,\varphi_\beta)$.
  Hence, if~${\xi(x) \in \feasregion(\alpha)}$ holds,
  $\sigma_\beta(x) = \sigma_\beta(\xi(x))$
  implies $\xi(x) \in \feasregion(\beta)$.
  Thus, it suffices to prove~${\xi(x) \in \feasregion(\alpha)}$.

  For improper branching decisions,
  $\sigma_\beta = \sigma_\alpha$ and \SHCs~\eqref{eq:main} are propagated.
  As ${x \in \feasregion(\beta) \subseteq \feasregion(\alpha)}$
  and $\sigma_\alpha(x) = \sigma_\alpha(\xi(x))$, 
  {\itshape (IH)} yields $\xi(x) \in \feasregion(\alpha)$.
  For proper branching decisions,
  we observe that
  $\sigma_\beta(\cdot) = 
  \restrict{(\pi_\beta \varphi_\alpha \psi_\alpha(\cdot))}{[m_\beta]}$,
  $\sigma_\alpha(\cdot) = 
  \restrict{(\pi_\beta \varphi_\alpha(\cdot))}{[m_\alpha]}$
  and~$m_\beta \geq m_\alpha$.
  Thus,
  $\sigma_\beta(x) = \sigma_\beta(\xi(x))$
  implies 
  $\sigma_\alpha(\psi_\alpha(x)) = \sigma_\alpha(\psi_\alpha\xi(x))$.
  As~$x \in \feasregion(\beta) \subseteq \feasregion(\alpha)$
  and $\psi_\alpha \in \stab{\Gamma}{\feasregion(\alpha)}$,
  we find~$\psi_\alpha(x) \in \feasregion(\alpha)$.
  By~{\itshape (IH)}, 
  $\sigma_\alpha(\psi_\alpha(x)) = \sigma_\alpha(\psi_\alpha\xi(x))$
  yields $\psi_\alpha \xi(x) \in \feasregion(\alpha)$.
  Again, since $\psi_\alpha \in \stab{\Gamma}{\feasregion(\alpha)}$,
  by applying $\psi_\alpha^{-1}$ left 
  we find $\xi(x) \in \feasregion(\alpha)$.
\end{proof}

\paragraph{Proof of Theorem~\ref{thm:main}}
The examples illustrate that many symmetry prehandling structures are
compatible with the correctness conditions, which shows that there are
potentially many variants to handle symmetries based on
Theorem~\ref{thm:main}.
We proceed to prove this theorem.
To this end, we make use of the following lemma.
\begin{lemma}
\label{lem:gen:main}
\begin{subequations}
  Let~$\feasregion \subseteq \R^n$ and let~$f\colon \feasregion \to \R$ be
  such that~$\optval(f,\feasregion)$ can be solved by (spatial)
  branch-and-bound.
  Let~$\Gamma$ be a finite group of symmetries of~$\optval(f,\feasregion)$.
  Suppose that the branch-and-bound method used for
  solving~$\optval(f,\feasregion)$ generates a full
  \bb-tree~$\bnbtree = (\bnbnodes, \bnbedges)$.
  Let $\beta \in \bnbnodes$ be not a leaf of the \bb-tree.
  If there is a feasible solution $\tilde x \in \feasregion(\beta)$ with
  \begin{equation}
    \label{eq:sigmalexmax}
    \sigma_\beta(\tilde x) \succeq \sigma_\beta \gamma (\tilde x)\
    \text{for all}\ \gamma \in \Gamma,
  \end{equation}
  then $\beta$ has exactly one child $\omega \in \chi_\beta$ for which
  there is~$\xi \in \Gamma$ such that
  \begin{gather}
    \label{eq:xiinfeas}
    \xi(\tilde x) \in \feasregion(\omega),\
    \\
    \text{and}\
    \label{eq:sigmaxilexmax}
    \sigma_\omega \xi (\tilde x) \succeq \sigma_\omega \gamma \xi (\tilde x)\
    \text{for all}\ \gamma \in \Gamma.
  \end{gather}
\end{subequations}
\end{lemma}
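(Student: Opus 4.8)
The plan is to follow the feasible solution $\tilde x$ down into exactly one child of $\beta$, using the correctness conditions to produce the required symmetry $\xi$. First I would fix a child and use the fact that $\{\feasregion(\omega) : \omega \in \chi_\beta\}$ partitions $\feasregion(\beta)$; since $\tilde x \in \feasregion(\beta)$, there is a unique child $\omega_0$ with $\tilde x \in \feasregion(\omega_0)$. The natural guess is to take $\xi = \psi_\beta^{-1}$, where $\psi_\beta \in \stab{\Gamma}{\feasregion(\beta)}$ is the symmetry appearing in \eqref{cond:symmetricallypermute} for the children of $\beta$ (it is the same for all children by \eqref{cond:sibl}). The point of applying $\psi_\beta^{-1}$ is that $\sigma_\omega = \restrict{(\pi_\omega \circ \varphi_\beta \circ \psi_\beta(\cdot))}{[m_\omega]}$, so precomposing with $\psi_\beta^{-1}$ turns the ``$\psi_\beta$-twisted'' order at the children back into something controlled by the order at $\beta$; this is exactly the manipulation used in the proof that Example~\ref{ex:orbitopalfixing} satisfies the conditions.

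Next I would verify \eqref{eq:xiinfeas}. We have $\psi_\beta(\feasregion(\beta)) = \feasregion(\beta)$, so $\psi_\beta^{-1}(\tilde x) \in \feasregion(\beta)$; it remains to pick the right child. Here I would instead \emph{define} $\omega$ to be the unique child with $\psi_\beta^{-1}(\tilde x) \in \feasregion(\omega)$ — that is the child we want, and uniqueness is immediate from the partition property. So set $\xi = \psi_\beta^{-1}$ and this $\omega$; then \eqref{eq:xiinfeas} holds by construction. For \eqref{eq:sigmaxilexmax}, I would compute, for $\gamma \in \Gamma$,
\[
  \sigma_\omega(\xi(\tilde x))
  = \restrict{\bigl(\pi_\omega \varphi_\beta \psi_\beta \psi_\beta^{-1}(\tilde x)\bigr)}{[m_\omega]}
  = \restrict{\bigl(\pi_\omega \varphi_\beta(\tilde x)\bigr)}{[m_\omega]},
\]
and similarly $\sigma_\omega(\gamma \xi(\tilde x)) = \restrict{(\pi_\omega \varphi_\beta \psi_\beta \gamma \psi_\beta^{-1}(\tilde x))}{[m_\omega]}$. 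Writing $\gamma' = \psi_\beta \gamma \psi_\beta^{-1} \in \Gamma$ (this is where I use that $\Gamma$ is a group, so conjugation stays inside $\Gamma$, and that $\gamma \mapsto \gamma'$ is a bijection of $\Gamma$), the claim \eqref{eq:sigmaxilexmax} becomes
\[
  \restrict{\bigl(\pi_\omega \varphi_\beta(\tilde x)\bigr)}{[m_\omega]}
  \succeq
  \restrict{\bigl(\pi_\omega \varphi_\beta \gamma'(\tilde x)\bigr)}{[m_\omega]}
  \quad\text{for all } \gamma' \in \Gamma.
\]
This is a statement about $\sigma_\beta$ versus $\sigma_\omega$: by \eqref{cond:ffunc} applied to the parent–child pair $(\beta,\omega)$, the first $m_\beta$ coordinates of $\pi_\omega \varphi_\beta(\cdot)$ agree with those of $\pi_\beta \varphi_\beta(\cdot) = \sigma_\beta(\cdot)$ (here, on the left side, $\sigma_\beta$ uses $\varphi_\beta$ since $\varphi_\omega = \varphi_\beta \psi_\beta$ — I need to be a little careful that \eqref{cond:ffunc} is stated for the $\pi$'s only, so I would phrase it as: $\pi_\omega$ and $\pi_\beta$ agree on the first $m_\beta$ slots, hence so do $\pi_\omega\varphi_\beta$ and $\pi_\beta\varphi_\beta$). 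Thus if $\sigma_\beta(\tilde x) = \sigma_\beta(\gamma'(\tilde x))$, I need to push the comparison into the ``new'' coordinates $m_\beta + 1, \dots, m_\omega$; but if $\sigma_\beta(\tilde x) \succ \sigma_\beta(\gamma'(\tilde x))$, the comparison is already decided in the first $m_\beta$ coordinates and \eqref{eq:sigmaxilexmax} follows immediately.

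So the remaining case — and the main obstacle — is when $\sigma_\beta(\tilde x) = \sigma_\beta(\gamma'(\tilde x))$ for some $\gamma'$, i.e.\ when $\tilde x$ and $\gamma'(\tilde x)$ are indistinguishable to the parent's SHC. Here I would invoke \eqref{cond:permutationcondition} at $\beta$: since $\tilde x \in \feasregion(\beta)$ and $\sigma_\beta(\tilde x) = \sigma_\beta(\gamma'(\tilde x))$, we get $\gamma'(\tilde x) \in \feasregion(\beta)$. Now compare $\tilde x$ and $\gamma'(\tilde x)$ in the new coordinates of $\sigma_\omega$. I expect one shows that among the symmetric copies $\gamma'(\tilde x)$ that tie with $\tilde x$ on $\sigma_\beta$, there is a well-defined lexicographically maximal value of $\sigma_\omega$ on the extra coordinates, and the child $\omega$ we picked (the one containing $\psi_\beta^{-1}(\tilde x)$) must be the one realizing it — otherwise one could find a $\gamma'$ violating \eqref{eq:sigmaxilexmax}, but then, using \eqref{cond:permutationcondition} to move $\gamma'(\tilde x)$ back into $\feasregion(\beta)$ and the partition property, we would land in a different child, contradicting that $\tilde x$ was chosen to satisfy \eqref{eq:sigmalexmax}. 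Making this precise is the crux; I would argue it by contradiction: suppose $\sigma_\omega(\xi(\tilde x)) \prec \sigma_\omega(\gamma'\xi(\tilde x))$ for some $\gamma'$; conjugating back shows $\sigma_\beta(\tilde x) = \sigma_\beta(\delta(\tilde x))$ but the extra coordinates of $\delta(\tilde x)$ beat those of $\tilde x$; by \eqref{cond:permutationcondition} $\delta(\tilde x) \in \feasregion(\beta)$, and it lies in some child $\omega'$; a short check shows $\omega' \neq \omega$ is impossible because the children's SHCs only restrict the new coordinates relative to $\sigma_\beta$, so $\delta(\tilde x)$ would have to be the lex-max among its own orbit within that child while still tying with $\tilde x$ on $\sigma_\beta$ — chasing this contradicts either \eqref{eq:sigmalexmax} for $\tilde x$ or the uniqueness of the child, giving the result. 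Uniqueness of $\omega$ in the Lemma's conclusion then follows from the partition property together with the fact that \eqref{eq:sigmaxilexmax} forces $\xi(\tilde x)$ to be the unique lex-maximal representative seen by $\sigma_\omega$ within the subtree.
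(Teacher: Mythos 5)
There is a genuine gap, and it lies at the heart of your construction: the choice $\xi = \psi_\beta^{-1}$ together with the child containing $\psi_\beta^{-1}(\tilde x)$ is simply the wrong candidate. The whole point of the lemma is that the representative of the orbit may have to \emph{change} when descending to a child, and the new representative $\xi(\tilde x)$ will in general live in a \emph{different} child than $\tilde x$ itself. A minimal counterexample: take $n = 2$, $\Gamma = \{\id, (1,2)\}$, $\tilde x = (0,1)$ feasible, branching on $x_1$ at the root with the prehandling structure of Example~\ref{ex:vardynamic} (so $m_\rho = 0$, $\psi_\rho = \id$, and $\sigma_\omega(x) = x_1$ at the children). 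Condition~\eqref{eq:sigmalexmax} holds vacuously at the root, your construction picks $\xi = \id$ and the child with $x_1 = 0$, but there $\sigma_\omega(\tilde x) = 0 \prec 1 = \sigma_\omega((1,2)(\tilde x))$, so \eqref{eq:sigmaxilexmax} fails; the correct pair is $\xi = (1,2)$ and the child with $x_1 = 1$. You do sense this obstacle in your ``remaining case'' discussion, but the patch you sketch tries to prove that the child containing $\psi_\beta^{-1}(\tilde x)$ realizes the lexicographic maximum over the tied copies, and that claim is false, so no amount of chasing the contradiction will close it.

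The paper's proof reverses the order of quantification, and that is the missing idea. It first chooses $\xi \in \Gamma$ so that $\sigma_\omega\xi(\tilde x)$ is lexicographically maximal over the whole (finite) group --- this makes \eqref{eq:sigmaxilexmax} hold by construction, with no case distinction --- and only \emph{then} proves $\xi(\tilde x) \in \feasregion(\beta)$: using \eqref{cond:ffunc} and \eqref{cond:symmetricallypermute} the first $m_\beta$ entries of $\sigma_\omega$ agree with $\sigma_\beta\psi_\beta$, so restricting \eqref{eq:sigmaxilexmax} and specializing to $\gamma = \psi_\beta^{-1}\xi^{-1}$ gives $\sigma_\beta\psi_\beta\xi(\tilde x) \succeq \sigma_\beta(\tilde x)$, which together with \eqref{eq:sigmalexmax} forces equality; then \eqref{cond:permutationcondition} puts $\psi_\beta\xi(\tilde x)$ in $\feasregion(\beta)$ and $\psi_\beta \in \stab{\Gamma}{\feasregion(\beta)}$ puts $\xi(\tilde x)$ there too, after which the partition property selects the unique child. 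Note also that this is exactly where the finiteness of $\Gamma$ enters (existence of a lexicographic maximizer), a hypothesis your argument never uses. Your uniqueness sketch is closer to the mark but still omits the decisive step: one must show any two maximizers $\xi, \xi'$ satisfy $\sigma_\omega\xi(\tilde x) = \sigma_\omega\xi'(\tilde x)$ and then invoke \eqref{cond:permutationcondition} \emph{at the child} $\omega$ to conclude $\xi'(\tilde x) \in \feasregion(\omega)$, so that the partition property rules out a second child.
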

\begin{proof}
Let $\tilde x \in \feasregion(\beta)$ respect~\eqref{eq:sigmalexmax}.
First, we show the existence of $\omega \in \chi_\beta$
satisfying~\eqref{eq:xiinfeas} and~\eqref{eq:sigmaxilexmax}.
Thereafter, we show that $\omega$ is unique.

\vspace{1em}
\noindent
\emph{Existence:}\quad
By Condition~\eqref{cond:sibl},
the maps $\sigma_\omega$ for all children~$\omega \in \chi_\beta$
are the same.
Let~$\xi \in \Gamma$ be such that~$\sigma_\omega \xi(\tilde x)$ is
lexicographically maximal.
Note that~$\xi$ exists, since $\Gamma$ is a finite group.
Then, $\sigma_\omega \xi(\tilde x) \succeq \sigma_\omega \gamma \xi(\tilde
x)$, because~$\xi, \gamma \in \Gamma$ implies~$\xi\gamma \in \Gamma$.
Thus, $\xi$ satisfies~\eqref{eq:sigmaxilexmax}.
We show that~$\xi(\tilde x) \in \feasregion(\omega)$
for some $\omega \in \chi_\beta$.

Recall that the branching decision at $\beta$ partitions its feasible
region, i.e., $\{ \feasregion(\omega) : \omega \in \chi_\beta \}$ partitions
$\feasregion(\beta)$.
As such, there is exactly one child $\omega \in \chi_\beta$
with $\xi(\tilde x) \in \feasregion(\omega)$
if $\xi(\tilde x) \in \feasregion(\beta)$.
To show \eqref{eq:xiinfeas}, it thus suffices
to prove~$\xi(\tilde x) \in \feasregion(\beta)$.

For any child $\omega \in \chi_\beta$, 
vector~$x$, and~$i \leq m_\beta$, we have
\begin{equation}
\label{eq:sigmabetasubstitutesigmaomega}
(\sigma_\omega(x))_i
=
(\pi_\omega \varphi_\omega (x))_i
\stackrel{\eqref{cond:ffunc}}=
(\pi_\beta \varphi_\omega (x))_i
\stackrel{\eqref{cond:symmetricallypermute}}=
(\pi_\beta \varphi_\beta \psi_\beta (x))_i
=
(\sigma_\beta \psi_\beta (x))_i
.
\end{equation}
Recall that $\xi \in \Gamma$ satisfies~\eqref{eq:sigmaxilexmax}.
Substituting \eqref{eq:sigmabetasubstitutesigmaomega}
yields
$\sigma_\beta \psi_\beta \xi (\tilde x)
\succeq
\sigma_\beta \psi_\beta \gamma \xi (\tilde x)$
for all $\gamma \in \Gamma$.
In particular, for $\gamma = \psi_\beta^{-1}\xi^{-1} \in \Gamma$,
we find
$\sigma_\beta \psi_\beta \xi (\tilde x)
\succeq \sigma_\beta(\tilde x)$.
Then~\eqref{eq:sigmalexmax}
yields $\sigma_\beta \psi_\beta \xi (\tilde x)
= \sigma_\beta(\tilde x)$.
By~\eqref{cond:permutationcondition},
we thus have $\psi_\beta \xi(\tilde x) \in \feasregion(\beta)$.
Since $\psi_\beta \in \stab{\Gamma}{\feasregion(\beta)}$,
applying $\psi_\beta^{-1}$ left on this solution yields
$\xi(\tilde x) \in \feasregion(\beta)$,
herewith completing the first part.

\vspace{1em}
\noindent
\emph{Uniqueness:}\quad
Suppose $\xi, \xi' \in \Gamma$ satisfy~\eqref{eq:sigmaxilexmax}.
For~$\gamma = \xi' \xi^{-1}$, \eqref{eq:sigmaxilexmax} for~$\xi$ implies
$\sigma_\omega \xi(\tilde x) \succeq \sigma_\omega \xi'(\tilde x)$.
Analogously, for $\xi'$ we choose $\gamma = \xi (\xi')^{-1}$ to find
$\sigma_\omega \xi'(\tilde x) \succeq \sigma_\omega \xi(\tilde x)$.
As a result,
$\sigma_\omega \xi(\tilde x) = \sigma_\omega \xi'(\tilde x)$.

Suppose $\xi(\tilde x) \in \feasregion(\omega)$.
Let $x = \xi(\tilde x)$ and $\gamma = \xi' \xi^{-1} \in \Gamma$.
Then, we find
\[
  \sigma_\omega(x)
  =
  \sigma_\omega \xi(\tilde x)
  =
  \sigma_\omega \xi'(\tilde x)
  =
  \sigma_\omega \xi'(\xi^{-1}(x))
  =
  \sigma_\omega \gamma(x),
\]
and Condition~\eqref{cond:permutationcondition}
yields $\xi'(\tilde x) = \gamma(x) \in \feasregion(\omega)$.
As the children $\chi_\beta$ partition $\feasregion(\beta)$
and $\xi'(\tilde x) \in \feasregion(\omega)$,
there is no other child of $\beta$ where $\xi'(\tilde x)$ is
feasible.
Thus, independent from $\xi \in \Gamma$
satisfying~\eqref{eq:sigmaxilexmax},
there is exactly one child~$\omega \in \chi_\beta$
with~$\xi(\tilde x) \in \feasregion(\omega)$.
\end{proof}
We are now able to prove Theorem~\ref{thm:main}.
\begin{proof}[Proof of Theorem~\ref{thm:main}]
  Recall that we assumed~$\bnbtree = (\bnbnodes, \bnbedges)$ to be finite
  and that we do not prune nodes by bound.
  Let~$\bnbtree_d$ be the tree arising from~$\bnbtree$ by pruning all nodes
  at depth larger than~$d$.
  Let~$(m_\beta,\pi_\beta,\varphi_\beta)_{\beta \in \bnbnodes}$ satisfy the
  correctness conditions.
  Let~$\check{x} \in \feasregion$  be any feasible solution to the original
  problem.
  We proceed by induction and show that, for every depth~$d$ of the tree,
  there is exactly one leaf node in~$\bnbtree_d$ for which a permutation
  of~$\check{x}$ is feasible and that does not violate the local
  \SHCs~\eqref{eq:main}.

  Let~$d = 0$.
  The only node at depth~$d$ is the root node~$\alpha$.
  Any feasible solution~$\check x \in \feasregion$
  is feasible in the root node~$\alpha \in \mathcal V$
  of the branch-and-bound tree~$\mathcal B$.
  In particular, we can permute~$\check x$ by any $\xi \in \Gamma$,
  and have a feasible symmetrical solution.
  For the root node, choose $\xi \in \Gamma$ such that~$\sigma_\alpha
  \xi(\check x) \succeq \sigma_\alpha \gamma \xi(\check x)$
  for all~$\gamma \in \Gamma$.
  That is, $\xi(\check{x})$ is not cut off by~\eqref{eq:main} at~$\alpha$.

  Let~$d > 0$ and let~$\tilde{x} \in \feasregion$.
  By induction, we may assume that there is exactly one leaf node~$\beta$
  of~$\bnbtree_d$ at which a permutation~$\xi(\check{x})$ is feasible and
  that is not cut off by~\eqref{eq:main}.
  If~$\beta$ is also a leaf in~$\bnbtree$, we are done.
  Otherwise, since~$\xi(\check{x})$ is not cut off by~\eqref{eq:main}, we
  can apply Lemma~\ref{lem:gen:main} and find that~$\beta$ has exactly one
  child~$\omega$ at which a permutation of~$\xi(\check{x})$ is feasible and
  is not cut off by~\eqref{eq:main} at node~$\omega$.
  This concludes the proof.
\end{proof}
\begin{remark}
  For spatial branch-and-bound algorithms, two subtleties arise.
  On the one hand, there might not exist a finite branch-and-bound tree.
  If all branching decisions partition a subproblem's feasible region,
  Theorem~\ref{thm:main} holds true for all trees pruned at a certain depth
  level.
  On the other hand, branching decisions do not necessarily partition the
  feasible region.
  In this case, \eqref{eq:main} can still be used to handle symmetries.
  However, in the depth-pruned tree there might exist more than one leaf
  containing a symmetric copy of a feasible solution.
\end{remark}
\begin{remark}
  Theorem~\ref{thm:main} still holds in case of some infinite groups.
  The only place where finiteness is used is in the proof of
  Lemma~\ref{lem:gen:main}, where it implies that a symmetry~$\xi \in \Gamma$
  exists such that $\sigma_{\omega}\xi(\tilde x)$ is lexicographically maximal
  for a fixed solution vector $\tilde x \in \feasregion(\beta)$.
  For instance, for infinite groups of rotational symmetries, such a
  symmetry always exists.
\end{remark}
\section{Apply framework on generic optimization problems}
\label{sec:cast}

Due to Theorem~\ref{thm:main}, we can completely handle all
symmetries of an arbitrary problem~$\optval(f, \feasregion)$, provided we
know how to handle Constraints~\eqref{eq:main}.
The aim of this section is therefore to find symmetry handling methods that
can deal with non-binary variables.
Since handling Constraints~\eqref{eq:main} is already difficult for binary
problems, we cannot expect to handle all symmetries efficiently.
Instead, we revisit the efficient methods \lexfix, orbitopal fixing, and
\orbitalfixing for binary variables and provide proper generalizations for
non-binary problems, which allows us to partially enforce
Constraints~\eqref{eq:main}.
We refer to these generalizations as lexicographic reduction, orbitopal
reduction, and orbital symmetry handling, respectively.

Throughout this section, we assume that~$\Gamma \leq \symmetricgroup{n}$.

\subsection{Lexicographic reduction}
\label{sec:gen:lexred}

\subsubsection{The static setting}

Assume the symmetry prehandling structure of Example~\ref{ex:staticsetting}
is used in Theorem~\ref{thm:main}.
Then, the \SHCs~$x \succeq \gamma(x)$ for all~$\gamma \in \Gamma$ are
enforced at each node of the branch-and-bound tree.
For all~$i \in [n]$, let~$\domain_i \subseteq \R^n$ be the domain of
variable~$x_i$ at a node of the branch-and-bound tree and let~$\domain =
(\domain_i)_{i \in [n]}$ be the vector of variable domains.
The aim of the lexicographic reduction (\lexred) algorithm is to find, for
a fixed permutation~$\gamma \in \Gamma$, the smallest domains~$\domain'_i$,
$i \in [n]$, such that
$\left\{ x \in \bigtimes_{i = 1}^n \domain_i : x \succeq \gamma(x)\right\} =
\left\{ x \in \bigtimes_{i = 1}^n \domain'_i : x \succeq \gamma(x)\right\}$.

If~$\domain_i \subseteq \binary$ for all~$i \in [n]$, the reductions found
by \lexred are equivalent to the reductions found by \lexfix.
For non-binary domains, similar ideas as for \lexfix, which are
described
in~\cite{BestuzhevaEtal2021OO,doornmalenhojny2022cyclicsymmetries}, can be used:
We iterate over the variables~$x_i$ with indices in increasing order.
If~$x_j = \gamma(x)_j$ for all indices~$j < i$, we enforce~$x_i \geq \gamma(x)_i$,
and we check if a solution with~$x_i = \gamma(x)_i$ exists.
Before we provide a rigorous algorithm, we illustrate the idea.

\begin{example}
  \label{ex:lexred}
Let $\feasregion = [-1, 1]^4 \cap \Z^4$ and $\gamma = (1,3,2,4)$.
Consider a node with relaxed region 
$x \in \{ 0 \} \times [-1, 0] \times \{ 1 \} \times [-1, 1]$.
Propagating 
$x \succeq \gamma(x)$,
we find
{\footnotesize
\begin{equation*}
\begin{bmatrix}
x_1& =&   0\\ 
x_2& \in& [-1,0]\\ 
x_3& =&   1 \\ 
x_4& \in& [-1, 1]\\
\end{bmatrix}
\succeq
\begin{bmatrix}
x_4& \in& [-1, 1]\\
x_3& =&   1 \\ 
x_1& =&   0\\ 
x_2& \in& [-1,0]\\ 
\end{bmatrix}
\stackrel{\text{(}\dagger\text{)}}{\leadsto}
\begin{bmatrix}
x_1& =&   0\\ 
x_2& \in& [-1,0]\\ 
x_3& =&   1 \\ 
x_4& \in& [-1, 0]\\
\end{bmatrix}
\succeq
\begin{bmatrix}
x_4& \in& [-1, 0]\\
x_3& =&   1 \\ 
x_1& =&   0\\ 
x_2& \in& [-1,0]\\ 
\end{bmatrix}
\!.
\end{equation*}%
}%
In ($\dagger$), we restrict the domain of $x_4$ by propagating 
$0 = x_1 \geq x_4$, resulting in $x_4 \in [-1, 0]$.
If~${x_1 = x_4 = 0}$,
then \SHC $x \succeq \gamma(x)$ implies 
the contradiction ${[-1, 0] \ni x_2 \geq x_3 = 1}$,
so we must have $x_1 > x_4$. Since $x_4 \in \Z$,
$x_4$ must be fixed to $-1$.
No further domain reductions can be derived from $x \succeq \gamma(x)$.
\end{example}

We now proceed with our generalization of \lexfix.
To enforce~$x \succeq \gamma(x)$ for general variable domains~$\domain$,
some artifacts need to be taken into account.
For example, if~$n = 3$ and~$\gamma$ is the cyclic right-shift,
then~$y^\epsilon \define(1+\epsilon,0,1) \succeq \gamma(y^\epsilon) =
(1,1+\epsilon,0)$ for every~$\epsilon > 0$, but~$y^0 \prec \gamma(y^0)$,
i.e., $\{x \in \R^n : x \succeq \gamma(x)\}$ is not necessarily closed.
Since optimization software usually can only handle closed sets, we propose
the following solution.
We extend~$\R$ by an infinitesimal symbol~$\varepsilon$ that we can add to
or subtract from any real number to represent a strict difference.
This results in a symbolically correct algorithm
that is as strong as possible.
For example, $\min\{ 1 + x : x > 1 \} = 2 + \varepsilon$,
$\min\{1 + x + \varepsilon : x > 1 \} = 2 + \varepsilon$,
$\max\{ 1 + x : x < 2 \} = 3 - \varepsilon$, 
and we do not allow further arithmetic 
with the $\varepsilon$ symbol.
In practice, however, we cannot enforce strict inequalities.
We thus replace~$\varepsilon$ by~0, which will lead to slightly
weaker but still correct reductions.
That is no problem for our purposes,
since we will only either apply the $\min$-operator or the 
$\max$-operator, the sign of $\varepsilon$ will always be the same;
namely, if $\varepsilon$ appears, 
this has a positive sign in minimization-operations,
and a negative sign in maximization-operations.

Now, we turn to the generalization of \lexfix to arbitrary variable domains.
We introduce a timestamp $t$.
At every time $t$, the current domain is denoted by~$\domain^t$.
We initialize~$\domain_i^0 = \domain_i$ for all~$i \in [n]$, and for two
timestamps~$t > t'$, we will possibly strengthen the domains, i.e.,
$\domain_i^{t} \subseteq \domain_i^{t'}$.

The core of \lexred is the observation that if~$\restrict{x}{[t-1]} =
\restrict{\gamma(x)}{[t-1]}$ holds for some~$t \geq 1$, then constraint~$x \succeq
\gamma(x)$ can only hold when~$x_t \geq \gamma(x)_t = x_{\gamma^{-1}(t)}$.
This observation is exploited in a two-stage approach.
In the first stage, \lexred performs the following steps for all~$t =
1,\dots,n$:
\begin{enumerate}
\item The algorithm propagates~$x_t \geq \gamma(x)_t$ by
  updating the variable domains via
\begin{equation}
\label{eq:VDR}
\begin{aligned}
\domain_t^t 
&= \{ z \in \domain_t^{t-1} :
z \geq \min(\domain_{\gamma^{-1}(t)}^{t-1}) \}
,\\
\domain_{\gamma^{-1}(t)}^t 
&= \{ z \in \domain_{\gamma^{-1}(t)}^{t-1} :
z \leq \max(\domain_{t}^{t-1}) \}
, \text{ and}\\
\domain_i^t
&= \domain_i^{t-1}\ \text{for}\ i \in [n] \setminus \{ t, \gamma^{-1}(t) \}.
\end{aligned}
\end{equation}
\item Then, it checks whether~$\domain^t_i \neq \emptyset$ for all~$i \in
  [n]$ and whether~$x \in \domain^t$ guarantees~$\restrict{x}{[t]} =
  \restrict{\gamma(x)}{[t]}$.
  If this is the case, the algorithm continues with iteration~$t+1$.
  Otherwise, the first phase of \lexred terminates, say at time~$t^\star$.
\end{enumerate}
Of course, all variable domain reductions found during phase one are
correct based on the previously mentioned observation.

At the end of phase one, three possible cases can occur: a variable domain
is empty, phase one has propagated all variables, i.e., $x = \gamma(x)$ for
all~$x \in \domain^n$, or~$\restrict{x}{[t^\star-1]} =
\restrict{\gamma(x)}{[t^\star-1]}$ and there exists~$(v,w) \in
\domain^{t^\star}_{t^\star} \times
\domain^{t^\star}_{\gamma^{-1}(t^\star)}$ with~$v \neq w$.
In either of the first two cases, the algorithm stops because it either
has shown that no solution~$x \in \domain^0$ exists with~$x \succeq
\gamma(x)$ or all variables are fixed.
In the last case, note that~$v > w$ holds due to the domain reductions
at time~$t^\star$.
Since~$\restrict{x}{[t^\star-1]} = \restrict{\gamma(x)}{[t^\star-1]}$ holds
for all~$x \in \domain^{t^\star}$, the relation~$v > w$ shows that there
exists~$x \in \domain^{t^\star}$ such that~$\restrict{x}{[t^\star]} \succ
\restrict{\gamma(x)}{[t^\star]}$.
Consequently, the domains of variables~$x_{t^\star+1},\dots,x_n$ cannot
be tightened.
It might be possible, however, that the domains of~$x_{t^\star}$
and~$\gamma(x)_{t^\star}$ can be reduced further.
Namely, if~$x_{t^\star} = \min \domain^{t^\star}_{\gamma^{-1}(t^\star)}$
or~$x_{\gamma^{-1}(t^\star)} = \max \domain^{t^\star}_{t^\star}$.
In this case, the other variable necessarily attains the same value, which
means that a solution with~$\restrict{x}{[t^\star]} =
\restrict{\gamma(x)}{[t^\star]}$ is created, which might lead to a
contradiction with~$x \succeq \gamma(x)$ as illustrated in
Example~\ref{ex:lexred}.

In the second stage of \lexred, it is checked whether one of these cases
indeed leads to a contradiction.
If this is the case, $\min \domain^{t^\star}_{\gamma^{-1}(t^\star)}$ can be
removed from the domain of~$x_{t^\star}$ or~$\max
\domain^{t^\star}_{t^\star}$ can be removed from the domain of~$x_{\gamma^{-1}(t)}$.
To detect whether a contradiction occurs, the second phase hypothetically
fixes~$x_{t^\star}$ or~$x_{\gamma^{-1}(t^\star)}$ to the respective value
and continues with stage one since~$\restrict{x}{[t^\star]} =
\restrict{\gamma(x)}{[t^\star]}$ now holds.
If phase one then terminates because a variable domain becomes empty, this
shows that the domain of~$x_{t^\star}$ or~$x_{\gamma^{-1}(t^\star)}$ can be
reduced.
Otherwise, no further variable domain reductions can be derived.
\begin{proposition}
  Let~$\tau$ be the time needed to perform one variable domain reduction
  in~\eqref{eq:VDR}.
  Then, \lexred finds all possible variable domain reductions for~$x
  \succeq \gamma(x)$ in~$\bigo(n \cdot \tau)$ time.
\end{proposition}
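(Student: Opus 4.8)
The plan is to split the proposition into its complexity claim and its correctness claim (``finds \emph{all} possible reductions'', i.e.\ each returned $\domain'_i$ is inclusionwise minimal), and to dispose of the complexity claim first, as it is routine. Phase one performs at most $n$ iterations, and iteration~$t$ consists of a single reduction of the form~\eqref{eq:VDR} (which touches only $\domain_t$ and $\domain_{\gamma^{-1}(t)}$), an emptiness check, and the test whether $x\in\domain^t$ forces $\restrict{x}{[t]}=\restrict{\gamma(x)}{[t]}$; given the loop invariant below, this last test reduces to checking whether, after the update, $\domain^t_t$ and $\domain^t_{\gamma^{-1}(t)}$ are the same singleton, so each iteration costs $\bigo(\tau)$ and phase one costs $\bigo(n\cdot\tau)$. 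Phase two hypothetically fixes $x_{t^\star}$ or $x_{\gamma^{-1}(t^\star)}$ to one of two candidate extreme values and re-enters phase one; this is at most two more phase-one runs, hence $\bigo(n\cdot\tau)$, and adding the two gives the stated bound.

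For correctness I would first record the loop invariant that, upon entering iteration~$t$ of phase one, every $x$ in the current domain box satisfies $\restrict{x}{[t-1]}=\restrict{\gamma(x)}{[t-1]}$: this is precisely what the test in step~2 certifies before the algorithm advances, and, as already noted in the text, it legitimises enforcing $x_t\ge x_{\gamma^{-1}(t)}$, so every reduction in~\eqref{eq:VDR} is valid and no point with $x\succeq\gamma(x)$ is ever discarded. For phase two, fixing the relevant variable to the extreme value in question re-establishes $\restrict{x}{[t^\star]}=\restrict{\gamma(x)}{[t^\star]}$, whence an emptiness returned by the ensuing phase-one call certifies that this value cannot be extended to a point obeying $x\succeq\gamma(x)$, so removing it is sound; if no emptiness occurs, the value admits a feasible dominant extension and is kept. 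Together these give $\{x\in\bigtimes_{i}\domain_i:x\succeq\gamma(x)\}=\{x\in\bigtimes_{i}\domain'_i:x\succeq\gamma(x)\}$.

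Minimality I would prove by a case distinction on how the algorithm terminates, mirroring the three cases identified before the proposition. If a domain becomes empty, then $\{x\in\bigtimes_{i}\domain_i:x\succeq\gamma(x)\}=\emptyset$ and reporting empty domains is trivially minimal. If phase one runs all $n$ iterations, then $x=\gamma(x)$ on the final box, so every point there already satisfies $x\succeq\gamma(x)$ and nothing can be removed. In the remaining case phase one stops at time $t^\star$ with a pair $v\in\domain^{t^\star}_{t^\star}$, $w\in\domain^{t^\star}_{\gamma^{-1}(t^\star)}$, $v\neq w$; since the time-$t^\star$ reductions force $v>w$, one can choose $v>\min\domain^{t^\star}_{\gamma^{-1}(t^\star)}$ and $w=\min\domain^{t^\star}_{\gamma^{-1}(t^\star)}$, producing a point $\hat x$ in the post-phase-two box with $\restrict{\hat x}{[t^\star-1]}=\restrict{\gamma(\hat x)}{[t^\star-1]}$ and $\hat x_{t^\star}>\hat x_{\gamma^{-1}(t^\star)}$, hence $\hat x\succ\gamma(\hat x)$. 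For an index $i\notin\{t^\star,\gamma^{-1}(t^\star)\}$ and any value $u$ still in $\domain'_i$, I claim the point obtained from $\hat x$ by replacing its $i$-th coordinate with $u$ still witnesses $x\succ\gamma(x)$: this replacement alters only the $i$-th entry and the $\gamma(i)$-th entry of $\gamma(x)$, so the only way to disturb the equalities on $[t^\star-1]$ is if $i\le t^\star-1$ or $\gamma(i)\le t^\star-1$ with $\gamma(i)\neq i$, but in either situation the loop invariant forces two distinct coordinates of every box point to coincide, which in a product domain makes $\domain_i$ a singleton, so $u=\hat x_i$ and nothing actually changes. Finally, for $i\in\{t^\star,\gamma^{-1}(t^\star)\}$ one checks that phase two removes exactly the single extreme value whose forced continuation is infeasible and retains every value that can be paired with a partner value to yield strict dominance at $t^\star$, so these two domains are minimal as well.

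The step I expect to be the main obstacle is precisely this last case: making the witness-modification argument airtight, keeping track that $\domain_i$ may be tightened at iteration~$i$, at iteration~$\gamma(i)$, and---for $i\in\{t^\star,\gamma^{-1}(t^\star)\}$---once more in phase two, and verifying that after all of these the surviving values coincide with the realisable ones (in particular that the $\hat x$ constructed above really lies in $\bigtimes_i\domain'_i$). Careful bookkeeping of which domains are final at which point of the run, together with the ``forced equality implies singleton'' consequence of the loop invariant, should close this, but it is the part that needs genuine care.
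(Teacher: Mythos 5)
Your proposal is correct and follows essentially the same route as the paper: the running-time bound via ``at most $n$ reductions in phase one, and phase two re-triggers phase one at most twice,'' and completeness via the loop invariant and the three-way case analysis at time $t^\star$. The paper's own proof is only two sentences, deferring correctness to the discussion preceding the proposition; your write-up simply makes that discussion (in particular the witness construction showing the domains of $x_{t^\star+1},\dots,x_n$ cannot be tightened) explicit.
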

\begin{proof}
  Completeness of \lexred follows from the previous discussion.
  The running time holds as the first stage computes at most~$n$
  domain  reductions and the second stage triggers phase
  one at most twice.
\end{proof}
In many cases, for instance, if variable domains are continuous or discrete
intervals, $\tau = \bigo(1)$, turning lexicographic reduction into a linear
time algorithm.
\subsubsection{Dynamic settings}

Theorem~\ref{thm:main}
shows that $\sigma_\beta(x) \succeq \sigma_\beta \gamma(x)$
is a valid symmetry handling constraint
for certain symmetry prehandling 
structures~$(m_\beta,\pi_\beta,\varphi_\beta)_{\beta \in \bnbnodes}$.
If~$\Gamma$ is a permutation group,
$\sigma_\beta(x)$ and $\sigma_\beta(\gamma(x))$
are just permutations of the solution vector entries 
and a restriction of this vector.
In this case, lexicographic reduction can, of course,
also propagate these \SHCs by changing the order in which 
we iterate over the solution vector entries.

In particular, in the binary case and the symmetry prehandling structure of
Example~\ref{ex:vardynamic}, the adapted version of \lexred is compatible with
\isomorphismpruning and \orbitalfixing as we have seen in
Section~\ref{sec:framework} that the latter two methods
propagate~$\sigma_\beta(x) \succeq \sigma_\beta \gamma(x)$ for
all~$\gamma\in\Gamma$.

\subsection{Orbitopal reduction}
\label{sec:gen:orbitopalfixing}
Bendotti \etal~\cite{BendottiEtAl2021} present a complete propagation 
algorithm to handle orbitopal symmetries on binary variables.
In this section, we generalize their algorithm to arbitrary variable
domains.
We call the generalization of orbitopal fixing \emph{orbitopal reduction}
as it does not necessarily fix variables.

\subsubsection{The static setting}

Suppose that~$\Gamma$ is the group that contains all column permutations of
a $p \times q$ variable matrix~$X$.
Further, assume that Theorem~\ref{thm:main} uses the symmetry prehandling structure
from Example~\ref{ex:staticsetting}, where we assume that the variable
vector~$x$ associated with the~$p \cdot q$ variables in~$X$ is
such that enforcing~$x \succeq \gamma(x)$ for all~$\gamma \in \Gamma$
corresponds to sorting the columns of~$X$ in lexicographic order.
With slight abuse of notation, for $\gamma \in \Gamma$, we
write $X \succeq \gamma(X)$ if and only if the corresponding vector~$x \in
\R^{pq}$ satisfies~$x \succeq \gamma(x)$.

We use the following notation.
For any~$M \in \R^{p \times q}$ and~$(i,j) \in [p] \times [q]$, we denote
by $M_i$ the $i$-th row of~$M$, by~$M^j$ the $j$-th column of~$M$, and
by~$M_i^j$ the entry at position $(i, j)$.
For every variable~$X_i^j$, we denote its domain by 
$\domain_i^j \subseteq \R$. 
Using the same matrix notation, 
$\domain \subseteq \R^{p \times q}$
denotes the $p \times q$-matrix where entry $(i, j)$ corresponds 
to~$\domain_i^j$.
For given domain~$\domain \subseteq \R^{p \times q}$, we denote
by~$\underline{M}(\domain)$ and~$\overline{M}(\domain)$ the
lexicographically smallest and largest element in~$\domain$, respectively.
Whenever the domain~$\domain$ is clear from the context, we just
write~$\underline{M}$ and~$\overline{M}$.
Moreover, let
\[
  O_{p \times q} \define \{ X \in \R^{p \times q} : X \succeq \gamma(X)\ \text{for all}\ \gamma \in \Gamma \}
\]
be the set of all matrices with lexicographically sorted columns.
Our goal is to find all possible \VDRs
of the \SHCs $X \succeq \gamma(X)$ for $\gamma \in \Gamma$, i.e.,
we want to find the smallest~$\hat{\domain} \subseteq \R^{p \times q}$ such
that
\[
  \hat{\domain} \cap O_{p \times q} = \domain \cap O_{p \times q}.
\]
It turns out that, as for the binary case~\cite{BendottiEtAl2021}, the
matrices~$\underline{M}(\domain)$ and~$\overline{M}(\domain)$ contain
sufficient information for finding~$\hat{\domain}$.
In the following, recall that we (implicitly) use the infinitesimal
notation introduced in the previous section to represent strict
inequalities.
\begin{theorem}
  \label{thm:genorbitopalfixing}
  Let~$\domain \subseteq \R^{p \times q}$.
  For~$j \in [q]$, let~$i_j \define \min(\{ i \in [p] :
  \underline{M}(\domain)_i^j \neq \overline{M}(\domain)_i^j \} \cup \{ p +
  1 \})$.
  Then, the smallest~$\hat{\domain} \subseteq \R^{p \times q}$ for
  which~$\hat{\domain} \cap O_{p \times q} = \domain \cap O_{p \times q}$
  holds satisfies, for every~$(i,j) \in [p] \times [q]$,
  \[
    \hat{\domain}^j_i
    =
    \begin{cases}
      \domain^j_i \cap [\underline{M}(\domain)_i^j,
      \overline{M}(\domain)_i^j],
      & \text{if } i \leq i_j,\\
      \domain^j_i, &\text{otherwise}.
    \end{cases}
  \]
\end{theorem}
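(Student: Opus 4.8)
The plan is to establish the formula for $\hat{\domain}$ by proving two inclusions: first that the claimed domains still capture all of $\domain \cap O_{p \times q}$ (soundness: no feasible point is lost), and second that no smaller domains can work (minimality/completeness: every bound that the formula imposes is actually attained by some element of $O_{p\times q}$). For soundness, I would fix $X \in \domain \cap O_{p \times q}$ and show $X^j_i \in \hat{\domain}^j_i$ for every $(i,j)$. For columns $j$ and rows $i \le i_j$ this amounts to showing $\underline{M}(\domain)^j_i \le X^j_i \le \overline{M}(\domain)^j_i$; for $i > i_j$ there is nothing to prove since $\hat{\domain}^j_i = \domain^j_i$. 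The key structural fact is that $\underline{M}(\domain)$ and $\overline{M}(\domain)$, being the lexicographically extreme elements of $\domain$ (not of $\domain \cap O_{p\times q}$), need not lie in $O_{p \times q}$ themselves, so I would need the following comparison lemma: for any $X \in \domain$ we have $\underline{M}(\domain) \preceq x \preceq \overline{M}(\domain)$ in the chosen row-wise lexicographic vector order, and I must translate this per-column. The crucial observation is that in column $j$, the entries in rows $1,\dots,i_j-1$ are constant across all of $\domain$ (they equal the common value of $\underline M$ and $\overline M$ there), hence in particular across $\domain \cap O_{p\times q}$; therefore comparing two matrices of $O_{p\times q}$ that agree with $\domain$ in those rows reduces, column by column, to comparing the single entry in row $i_j$ — and that entry is squeezed between $\underline M(\domain)^j_{i_j}$ and $\overline M(\domain)^j_{i_j}$.

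Making that precise is the heart of the argument, and it is where I expect the main obstacle. The subtlety is that $\underline M(\domain)$ is lexicographically minimal over the box $\domain$, but its columns are \emph{not} necessarily sorted, so one cannot naively say "$X$ has sorted columns, $\underline M$ has sorted columns, and they're close." Instead I would argue as follows. By definition of $i_j$, all rows $1,\dots,i_j-1$ of column $j$ are singletons in $\domain$, so every $X \in \domain$ has those entries fixed; this holds simultaneously for all columns. Now take any $X \in \domain \cap O_{p\times q}$ and suppose, for contradiction, that $X^j_{i_j} < \underline{M}(\domain)^j_{i_j}$ for some $j$. I would build a matrix $X'$ that agrees with $\underline M(\domain)$ except it copies column $j$ from $X$ in rows $\ge i_j$ (this is legal since the box $\domain$ is a product of the $\domain^j_i$, so swapping a single column's lower-rows block between two box-elements stays in the box), and show $X' \prec \underline M(\domain)$ in the row-wise vector order: the two agree through the first $q(i_j-1)+j-1$ coordinates by the constancy of the upper rows, and at coordinate $q(i_j-1)+j$ we have $X'_{(\cdot)} = X^j_{i_j} < \underline M(\domain)^j_{i_j}$. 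This contradicts minimality of $\underline M(\domain)$. The symmetric argument with $\overline M$ gives the upper bound, and the case $i_j = p+1$ is vacuous. The reason "$X$ has sorted columns" is not even needed for soundness — only box-membership of $X$ matters — which is worth noting, since it means the constraint $\hat\domain \cap O_{p\times q} \subseteq \domain \cap O_{p\times q}$ is the easy direction and $\supseteq$ is the content.

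For minimality, I would show that for each $(i,j)$ with $i \le i_j$, both endpoints $\underline M(\domain)^j_i$ and $\overline M(\domain)^j_i$ of the interval $[\underline M(\domain)^j_i, \overline M(\domain)^j_i]$ are realized: there exists $X \in \domain \cap O_{p\times q}$ with $X^j_i = \underline M(\domain)^j_i$, and likewise for the upper endpoint (so no strictly smaller domain can satisfy the equation). The natural candidates are $\underline M(\domain)$ and $\overline M(\domain)$ themselves, but as noted these may fail to have sorted columns. However, for rows $i \le i_j$ they \emph{do} behave correctly: since rows $1,\dots,i_j-1$ of column $j$ are constant over $\domain$ and since $\underline M(\domain)$ is the overall lexicographic minimum, one can show $\underline M(\domain)$ restricted appropriately can be repaired into an element of $O_{p\times q}$ that still attains the value $\underline M(\domain)^j_i$ at $(i,j)$ for all $i \le i_j$ — concretely, replacing the lower-row blocks (rows $\ge i_j$) of each column by something that sorts them does not disturb entries in rows $\le i_j$, and one must check such a global repair is simultaneously consistent, which is exactly the kind of column-by-column greedy completion argument used in the binary case in~\cite{BendottiEtAl2021}. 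I would invoke or adapt that completion construction. Finally I would assemble the two inclusions to conclude $\hat\domain$ as defined is precisely the smallest domain with $\hat\domain \cap O_{p\times q} = \domain \cap O_{p\times q}$, and remark that the infinitesimal $\varepsilon$-bookkeeping from the previous subsection is what makes the interval endpoints well-defined even when the relevant infima/suprema over $\domain \cap O_{p\times q}$ are not attained in $\R$.
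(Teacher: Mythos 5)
There is a genuine gap, and it originates in how you read $\underline{M}(\domain)$ and $\overline{M}(\domain)$. You treat them as the lexicographically extreme elements of the box $\domain$ itself and build both halves of your argument on the consequence that they ``need not lie in $O_{p\times q}$.'' In the paper they are the lexicographic extremes of $\domain \cap O_{p \times q}$ (exactly as in the binary case of Bendotti et al., and as the paper's own recursion $\underline M^j = \lexmin\{X \in \domain^j : X \succeq \underline M^{j+1}\}$ makes explicit); in particular they \emph{do} have sorted columns. Under your reading the theorem is simply false: take $p=q=2$ with $\domain_1^1=\domain_1^2=\{0,1\}$ and the second row fixed to $(0,1)$; then $\domain\cap O_{2\times 2}$ contains only the matrix with first row $(1,0)$, so the smallest $\hat\domain$ fixes $X_1^1=1$ and $X_1^2=0$, whereas the box extremes give $\underline M_1^j=0$, $\overline M_1^j=1$ and your formula returns $\hat\domain_1^j=\{0,1\}$. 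This misreading makes your ``soundness'' swap argument vacuous (the inequality $X_{i_j}^j \ge \min\domain_{i_j}^j$ holds trivially and carries no information), and it sends your ``repair'' step after the wrong target: there is nothing to repair, but instead one must prove the nontrivial fact that any $X \in O_{p\times q}\cap\domain$ satisfies $\underline M_{i}^{j} \le X_{i}^{j} \le \overline M_{i}^{j}$ for $i \le i_j$, which the paper does by a minimal-row-counterexample argument against lex-minimality of $\underline M$ \emph{within} $O_{p\times q}\cap\domain$.

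The minimality half is also incomplete even as an outline. Showing that the two endpoints $\underline M_i^j$ and $\overline M_i^j$ are attained does not show $\hat\domain_i^j$ is smallest: if some interior value $v \in \domain_i^j$ with $\underline M_i^j < v < \overline M_i^j$ were attained by no matrix in $O_{p\times q}\cap\domain$, one could delete $v$ and obtain a strictly smaller domain with the same intersection. The paper's tightness lemma therefore produces a certificate for \emph{every} such $v$: it glues the columns of $\overline M$ to the left of $j$, the columns of $\underline M$ to the right of $j$, fills column $j$ with the common values above row $i_j$ and with extreme values of $\domain$ below, and then perturbs the single entry $(i_j,j)$ to $v$, checking that column-sortedness survives. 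You would need an argument of this strength (and also a tightness certificate for the rows $i > i_j$, which the paper gets by perturbing the same matrix $A$) to conclude that $\hat\domain$ is the smallest domain.
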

This theorem is proven by the following two lemmas.
The first lemma shows that no tighter \VDRs can be achieved:
for every $(i, j) \in [p] \times [q]$
and $v \in \hat\domain_i^j$ a lexicographically non-increasing solution 
matrix $\tilde X$ exists with $\tilde X_i^j = v$.
The second lemma shows that the \VDRs are valid:
for every~$(i, j) \in [p] \times [q]$
and $v \in \domain_i^j \setminus \hat\domain_i^j$,
no matrix $\tilde X$ with $\tilde X_i^j = v$ exists.
\begin{lemma}
\label{lem:genorbitopalfixing:tight}
Suppose that $O_{p \times q} \cap \domain \neq \emptyset$.
Let $i' \in [p]$ and $j' \in [q]$ with $i' \leq i_{j'}$.
For all $x \in \domain_{i'}^{j'}$
with~$\underline M _{i'}^{j'} \leq x \leq \overline M _{i'}^{j'}$
there is $X \in O_{p \times q} \cap \domain$
with $X_{i'}^{j'} = x$.
\end{lemma}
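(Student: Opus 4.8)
The plan is to construct the desired matrix $X$ explicitly by blending the two extremal matrices $\underline{M} = \underline{M}(\domain)$ and $\overline{M} = \overline{M}(\domain)$ with a modification in column $j'$. The intuition: columns are sorted lexicographically non-increasingly, and $\underline{M}$ and $\overline{M}$ agree with each other on every column $j$ down to row $i_j - 1$. So we have a lot of freedom below those "agreement rows". The key structural fact I would establish first is a monotonicity/consistency property of $\underline{M}$ and $\overline{M}$: since $\domain$ is a box $\bigtimes \domain_i^j$, the lexicographically smallest matrix $\underline{M}$ is obtained greedily (take the smallest feasible value in row-major order, i.e.\ reading the flattened vector $x$), and similarly $\overline{M}$ greedily takes the largest. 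I would record that $\underline{M}_i^j = \min \domain_i^j$ and $\overline{M}_i^j = \max \domain_i^j$ for $i > i_j$ — i.e.\ once the two extremal matrices have "split" in column $j$, below the split row each is just the pointwise min (resp.\ max) of the domain — whereas for $i < i_j$ they coincide and for $i = i_j$ we have $\underline{M}_{i_j}^j < \overline{M}_{i_j}^j$ (strictly, using the $\varepsilon$-convention). This is because above the split the greedy choices are forced to agree, and once they differ at row $i_j$ they are thereafter unconstrained by the column-order requirement coming from rows above.

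\emph{Construction.} Given $i', j'$ with $i' \le i_{j'}$ and $x \in \domain_{i'}^{j'}$ with $\underline M_{i'}^{j'} \le x \le \overline M_{i'}^{j'}$, I would define $X$ column by column. For columns $j \ne j'$, I need $X^j$ to be a valid lexicographically non-increasing "neighbor" of column $j'$. The safe choice: for columns $j$ to the left of $j'$ set $X^j = \overline{M}^j$, and for columns to the right of $j'$ set $X^j = \underline{M}^j$. For column $j'$ itself, set $X_i^{j'} = \underline{M}_i^{j'} = \overline{M}_i^{j'}$ for $i < i'$ (they agree there since $i' \le i_{j'}$ so $i < i_{j'}$), set $X_{i'}^{j'} = x$, and for $i > i'$ set $X_i^{j'}$ to be the smallest value in $\domain_i^{j'}$ — i.e.\ $\underline{M}_i^{j'}$ — so that the remainder of the column is as small as possible. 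I then need to verify (a) $X \in \domain$, which is immediate from $X_i^j \in \domain_i^j$ in every case; (b) the columns of $X$ are lexicographically non-increasing, i.e.\ $X^1 \succeq X^2 \succeq \cdots \succeq X^q$; and (c) $X_{i'}^{j'} = x$, which holds by construction.

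\emph{Verifying the column order.} The comparisons split into several adjacent-column checks. Between two columns both equal to $\overline{M}^{\cdot}$ (left block) or both equal to $\underline{M}^{\cdot}$ (right block), the order is inherited from $\underline{M}, \overline{M} \in O_{p\times q}$. The delicate comparisons are at the boundary of the $j'$ column with its neighbors. For the left neighbor $j' - 1$ (if it exists): $X^{j'-1} = \overline{M}^{j'-1} \succeq \overline{M}^{j'} \succeq $ (column $j'$ of $X$)? The first $\succeq$ holds since $\overline M \in O_{p \times q}$. For the second: $X^{j'}$ agrees with $\overline{M}^{j'}$ on rows $1,\dots,i'-1$; at row $i'$ we have $X_{i'}^{j'} = x \le \overline{M}_{i'}^{j'}$. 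If $x < \overline M_{i'}^{j'}$ we're done; if $x = \overline M_{i'}^{j'}$ we need to look further down, and here I would use that below a certain point $\overline M$ takes pointwise maxima, so $\overline{M}_i^{j'} \ge X_i^{j'}$ for all $i > i'$ — but this requires $i' \ge i_{j'}$, whereas we only have $i' \le i_{j'}$. The resolution: if $x = \overline{M}_{i'}^{j'}$ and $i' < i_{j'}$, then $x = \underline{M}_{i'}^{j'}$ too, and I should instead have built column $j'$ to match $\overline{M}^{j'}$ further down; symmetrically if $x = \underline{M}_{i'}^{j'}$. \textbf{This case analysis around whether $x$ hits an endpoint is the main obstacle}: I would handle it by choosing, for the rows below $i'$ in column $j'$, values that "track" $\overline{M}$ as long as equality with $\overline{M}_{i'}^{j'}$ persists and track $\underline{M}$ otherwise — essentially re-running the greedy construction of a lex-extremal matrix within the reduced domain where $X_{i'}^{j'}$ is pinned to $x$. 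For the right neighbor $j'+1$: column $j'$ of $X$ $\succeq \underline{M}^{j'} \succeq \underline{M}^{j'+1} = X^{j'+1}$; the first inequality because on rows $<i'$ column $j'$ of $X$ equals $\underline M^{j'}$, at row $i'$ it is $x \ge \underline M_{i'}^{j'}$, and below $i'$ — again by the tracking construction — it dominates $\underline M^{j'}$. The second inequality is from $\underline M \in O_{p\times q}$. Once the boundary comparisons are settled, I would assemble these into the claim that $X \in O_{p\times q} \cap \domain$ with $X_{i'}^{j'} = x$, completing the proof.
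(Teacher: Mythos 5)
Your overall plan---take $\overline{M}$ for the columns left of $j'$, take $\underline{M}$ for the columns right of $j'$, and splice a modified column $j'$ in between---is exactly the paper's. The execution, however, has a genuine gap. The structural fact you rely on, namely that $\underline{M}_i^j = \min\domain_i^j$ and $\overline{M}_i^j = \max\domain_i^j$ for all $i > i_j$, is false. Take $p = q = 2$ with $\domain_1^1 = \domain_2^1 = \{0,1\}$, $\domain_1^2 = \{0\}$, $\domain_2^2 = \{1\}$: the second column is forced to $(0,1)^\top$, the admissible first columns are $(0,1)^\top$, $(1,0)^\top$, $(1,1)^\top$, so $\underline{M}^1 = (0,1)^\top$, $\overline{M}^1 = (1,1)^\top$, hence $i_1 = 1$ but $\underline{M}_2^1 = 1 \neq 0 = \min\domain_2^1$. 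Consequently your identification ``the smallest value in $\domain_i^{j'}$ --- i.e.\ $\underline{M}_i^{j'}$'' is unjustified, and your primary construction (filling column $j'$ below row $i'$ with domain minima) actually produces an infeasible certificate: in the same example with $j' = 1$, $i' = i_1 = 1$ and $x = 0 = \underline{M}_1^1$ you get $X^1 = (0,0)^\top \prec (0,1)^\top = X^2$, so $X \notin O_{2\times 2}$. Note also that the boundary comparison you single out as problematic (the left neighbor, $\overline{M}^{j'} \succeq X^{j'}$) is in fact harmless with the $\min$-completion, since $\min\domain_i^{j'} \leq \overline{M}_i^{j'}$ always; the comparison that breaks is the right one, $X^{j'} \succeq \underline{M}^{j'+1}$, precisely when $x$ equals the lower endpoint.

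Your ``tracking'' patch points in the right direction---it effectively amounts to completing column $j'$ with $\underline{M}^{j'}$ itself when $x < \overline{M}_{i'}^{j'}$, which does repair the example above---but it is only sketched, is not verified, and in places still invokes the false pointwise-extremum claim. The paper resolves the endpoint dichotomy cleanly with \emph{two} completions rather than one: matrix $A$ uses $\min\domain_i^{j'}$ below row $i_{j'}$ and certifies the value $\overline{M}_{i'}^{j'}$ (the $\min$-completion is then harmless because $A_{i_{j'}}^{j'} = \overline{M}_{i_{j'}}^{j'} > \underline{M}_{i_{j'}}^{j'}$ already decides the comparison with column $j'+1$ strictly at row $i_{j'}$), while matrix $B$ uses $\max\domain_i^{j'}$ below row $i_{j'}$ and certifies $\underline{M}_{i'}^{j'}$ (so that $B^{j'} \succeq \underline{M}^{j'} \succeq \underline{M}^{j'+1}$); interior values $\underline{M}_{i_{j'}}^{j'} < x < \overline{M}_{i_{j'}}^{j'}$ are certified by perturbing $A$ at the single entry $(i_{j'},j')$, where the strict inequality $x > \underline{M}_{i_{j'}}^{j'}$ again makes everything below row $i_{j'}$ irrelevant. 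To complete your proof you would need to replace the single $\min$-completion by such a case distinction (or prove the tracking construction correct), and drop the false auxiliary claim.
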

\begin{proof}
We define two matrices $A, B \in \domain$, for which entries~$(i,j) \in [p]
\times [q]$ are
\[
A_i^j =
\begin{cases}
\overline M_i^j & \text{if}\ j < j', \\
\underline M_i^j & \text{if}\ j > j', \\
\overline M_i^j = \underline M_i^j & \text{if}\ j = j', i < i_j, \\
\overline M_i^j (> \underline M_i^j) & \text{if}\ j = j', i = i_j,\\
\min(\domain_i^j) & \text{if}\ j = j', i > i_j,
\end{cases}
\ 
\text{and}\ 
B_i^j =
\begin{cases}
\overline M_i^j & \text{if}\ j < j', \\
\underline M_i^j & \text{if}\ j > j', \\
\overline M_i^j = \underline M_i^j & \text{if}\ j = j', i < i_j, \\
\underline M_i^j (< \overline M_i^j) & \text{if}\ j = j', i = i_j, \\
\max(\domain_i^j) & \text{if}\ j = j', i > i_j.
\end{cases}
\]
From these two matrices, we show that for any~$x \in \domain_{i'}^{j'}$
with~$\underline M _{i'}^{j'} \leq x \leq \overline M _{i'}^{j'}$ there
is~$X \in O_{p \times q} \cap \domain$ with $X_{i'}^{j'} = x$.
We call such a matrix~$X$ a certificate for~$x$.
In the following, we first provide a construction for these certificates,
and after that we show that they are contained 
in~$\domain \cap O_{p \times q}$.

If $i' < i_{j'}$, then $x = \overline M_{i'}^{j'} = \underline M_{i'}^{j'}$.
Thus, $X = A$ is a certificate.
If $i' = i_{j'}$,
then there are three options:
If $x = \underline M_{i'}^{j'}$,
then $X = B$ is a certificate;
if $x = \overline M_{i'}^{j'}$,
then $X = A$ is a certificate;
and if~$\underline M_{i'}^{j'} < x < \overline M_{i'}^{j'}$,
then construct $X$ with
$X_i^j = A_i^j$ if $(i, j) \neq (i', j')$,
and $X_{i'}^{j'} = x$.

Note that $X \in \domain$.
We finally show that $X \in O_{p \times q}$, concluding the proof.
The first $j' - 1$ columns of $X$ correspond to $\overline M$.
That is, they satisfy $X^j \succeq X^{j + 1}$ for all $1 \leq j < j' - 1$.
Similarly, the columns after column $j'$ correspond to~$\underline M$.
Hence, $X^j \succeq X^{j + 1}$ for all $j' < j < q$.
By the definition of $A$ and $B$,
$\overline M^{j' - 1} \succeq \overline M^{j'} \succeq A^{j'}$ 
and 
$B^{j'} \succeq \underline M^{j'} \succeq \underline M^{j' + 1}$.
As the columns of~$X$ are either columns of~$A$ or~$B$, or equal
to~$A^{j'}$ up to one entry while remaining lexicographically larger than $B^{j'}$,
we find~$\overline M^{j' - 1} = A^{j' - 1} = X^{j' - 1} 
\succeq A^{j'} \succeq X^{j'} \succeq B^{j'} \succeq B^{j' + 1} = X^{j' +
  1}  = \underline M^{j' + 1}$.
So, for all consecutive $j\in [q - 1]$, we have~$X^j \succeq X^{j + 1}$,
and hence $X \in O_{p \times q} \cap \domain$.
\end{proof}
\begin{lemma}
\label{lem:genorbitopalfixing:valid}
Suppose that $O_{p \times q} \cap \domain \neq \emptyset$.
Let $i' \in [p]$ and $j' \in [q]$ with $i' \leq i_{j'}$.
For all $X \in O_{p \times q} \cap \domain$, we have
$\underline M _{i'}^{j'} \leq X_{i'}^{j'} \leq \overline M _{i'}^{j'}$.
\end{lemma}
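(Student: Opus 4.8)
The plan is to reduce the entrywise assertion to a \emph{columnwise} comparison and then settle the latter using the lexicographic maximality of $\overline{M}$. I would first prove that for every $X \in O_{p \times q} \cap \domain$ and every column $j \in [q]$ one has $\underline{M}^{j} \preceq X^{j} \preceq \overline{M}^{j}$, where $\preceq$ compares the length-$p$ columns lexicographically. (Throughout, extremal elements and lexicographic comparisons are read in the $\varepsilon$-extended order of Section~\ref{sec:gen:lexred}; this plays no role in the combinatorial part of the argument.) Granting this, the lemma follows by a short induction on the row index $i$ for the fixed column $j'$: if $X^{j'}$, $\underline{M}^{j'}$ and $\overline{M}^{j'}$ agree on rows $1,\dots,i-1$ and $i \le i_{j'}$, then the two column inequalities yield $\underline{M}^{j'}_i \le X^{j'}_i \le \overline{M}^{j'}_i$; for $i < i_{j'}$ the definition of $i_{j'}$ forces $\underline{M}^{j'}_i = \overline{M}^{j'}_i$, hence $X^{j'}_i = \underline{M}^{j'}_i = \overline{M}^{j'}_i$ and the induction continues, while for $i = i_{j'}$ the sandwiched inequality is exactly the claim. (If $i_{j'} = p+1$, i.e.\ $\underline{M}^{j'} = \overline{M}^{j'}$, the assertion is immediate.)

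To prove the columnwise comparison I would establish $X^{j} \preceq \overline{M}^{j}$ for all $j$, the inequality $\underline{M}^{j} \preceq X^{j}$ being entirely analogous (using lexicographic minimality of $\underline{M}$ in place of maximality of $\overline{M}$ and reversing all inequalities). Suppose, towards a contradiction, that $j$ is minimal such that some $X \in O_{p \times q} \cap \domain$ satisfies $X^{j} \succ \overline{M}^{j}$. Let $Z$ be obtained from $\overline{M}$ by replacing its $j$-th column with $X^{j}$; since a matrix lies in $\domain$ exactly when each entry lies in the corresponding entrywise domain $\domain^{j}_{i}$, this gives $Z \in \domain$. Because $Z$ and $\overline{M}$ differ only in column $j$ and $X^{j} \succ \overline{M}^{j}$, the first entry (in the row-wise reading order underlying $\succ$) at which $Z$ and $\overline{M}$ disagree is $(s,j)$, where $s$ is the first row on which $X^{j}$ and $\overline{M}^{j}$ differ, and there $Z$ is strictly larger; hence $Z \succ \overline{M}$. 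This contradicts the lexicographic maximality of $\overline{M}$ in $O_{p \times q} \cap \domain$, \emph{provided} that $Z \in O_{p \times q}$.

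The main obstacle will be exactly this: verifying that the columns of $Z$ are lexicographically non-increasing. All column comparisons other than $Z^{j-1} \succeq Z^{j}$ and $Z^{j} \succeq Z^{j+1}$ are inherited unchanged from $\overline{M} \in O_{p\times q}$. For $Z^{j} \succeq Z^{j+1}$ (vacuous if $j=q$): $Z^{j} = X^{j} \succ \overline{M}^{j} \succeq \overline{M}^{j+1} = Z^{j+1}$. For $Z^{j-1} \succeq Z^{j}$ (vacuous if $j=1$): minimality of $j$ gives $\overline{M}^{j-1} \succeq X^{j-1}$, and $X \in O_{p\times q}$ gives $X^{j-1} \succeq X^{j}$, whence $Z^{j-1} = \overline{M}^{j-1} \succeq X^{j-1} \succeq X^{j} = Z^{j}$; this closes the argument. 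The remaining pieces --- the first-differing-index bookkeeping in the induction of the first paragraph, and the observation that a single-column modification affects the row-wise reading exactly as described --- are routine case analysis; the genuine content sits in the column swap preserving lexicographic sortedness, which is precisely where $X \in O_{p\times q}$ and the minimality of the counterexample are used.
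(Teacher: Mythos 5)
Your proof is correct, but it is organized differently from the paper's. The paper argues entrywise within the fixed column $j'$: it takes a minimal violating row index $i'$, reduces by symmetry to the case $X_{i'}^{j'} < \underline{M}_{i'}^{j'}$, and then observes that either $X_{i}^{j'} \leq \underline{M}_{i}^{j'}$ for all $i<i'$, so that $X^{j'} \prec \underline{M}^{j'}$ contradicts minimality of $\underline{M}$, or some earlier row already violates the upper bound against $\overline{M}$ (using $\underline{M}_{i''}^{j'}=\overline{M}_{i''}^{j'}$ for $i''<i_{j'}$), contradicting minimality of $i'$; the interplay between the two extremal matrices happens inside a single column. You instead isolate a columnwise sandwiching statement $\underline{M}^{j} \preceq X^{j} \preceq \overline{M}^{j}$, prove it by an explicit exchange argument (replace one column of $\overline{M}$ by $X^{j}$ and verify the result stays in $O_{p \times q} \cap \domain$), and then recover the entrywise claim by a row induction. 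What your route buys is that it makes fully explicit why a columnwise deviation contradicts the \emph{matrix-level} (row-wise read) lexicographic extremality of $\overline{M}$ resp.\ $\underline{M}$ --- a point the paper's proof leaves implicit when it passes from ``$X^{j'} \prec \underline{M}^{j'}$'' to a contradiction with minimality of $\underline{M}$; that step is really justified either by your $Z$-construction or by the columnwise recursion for $\underline{M}^{j}$ stated only later in the section. One small caveat on your ``entirely analogous'' lower-bound argument: if you keep the counterexample column $j$ \emph{minimal}, the verification $Z^{j} \succeq Z^{j+1}$ needs $X^{j+1} \succeq \underline{M}^{j+1}$, which is the claim for column $j+1$ rather than $j-1$; so the dual argument should take $j$ \emph{maximal} (equivalently, apply your argument after reversing the column order and negating all entries, which is what ``reversing all inequalities'' amounts to). With that reading, everything goes through.
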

\begin{proof}
Suppose the contrary, i.e.,
for $X \in O_{p \times q} \cap \domain$
either $X_{i'}^{j'} < \underline M _{i'}^{j'}$
or $X_{i'}^{j'} > \overline M _{i'}^{j'}$.
Suppose that~$i'$ is minimal, i.e., there is no $i'' < i'$
with $X_{i''}^{j'} < \underline M _{i''}^{j'}$
or $X_{i''}^{j'} > \overline M _{i''}^{j'}$.
By symmetry, it suffices to consider the case
$X_{i'}^{j'} < \underline M _{i'}^{j'}$.

If $X_{i}^{j'} \leq \underline M _{i}^{j'}$ holds for all $i < i'$,
then $X^{j'} \prec \underline M^{j'}$,
which contradicts that $\underline M$ is the lexicographically minimal 
solution of $O_{p \times q} \cap \domain$.
Hence, there is a row $i'' < i'$ 
with $X_{i''}^{j'} > \underline M_{i''}^{j'}$.
Since~$i'' < i' \leq i_{j'}$
yields $\underline M_{i''}^{j'} = \overline M_{i''}^{j'}$,
we have $X_{i''}^{j'} > \overline M_{i''}^{j'}$.
This contradicts that $i'$ is supposed to be minimal 
with~$X_{i'}^{j'} < \underline M_{i'}^{j'}$
or~$X_{i'}^{j'} > \overline M_{i'}^{j'}$,
since for~$i'' < i'$ we satisfy the second condition.
This is a contradiction.
\end{proof}
\begin{proof}[Proof of Theorem~\ref{thm:genorbitopalfixing}]
  Lemmas~\ref{lem:genorbitopalfixing:tight}
  and~\ref{lem:genorbitopalfixing:valid} prove the assertion for $i' \leq
  i_{j'}$.
  Since the domains for~${i' > i_{j'}}$ are not restricted in comparison
  to~$\domain$, domain~$\hat{\domain}^{j'}_{i'}$ is valid.
  To show that it is as tight as possible, we can reconsider in the proof of
  Lemma~\ref{lem:genorbitopalfixing:tight} the matrix $A \in \domain \cap
  O_{p \times q}$.
  Replacing entry~$(i', j')$ in $A$ with any value in $\domain_{i'}^{j'}$
  yields a matrix $\tilde A$.
  If $i' > i_{j'}$, this change does not affect the lexicographic order
  constraint, so $\tilde A \in \domain \cap O_{p \times q}$ is a certificate
  of tightness.
  Combining these statements shows correctness of 
  Theorem~\ref{thm:genorbitopalfixing}.
\end{proof}
We conclude this section with an analysis of the time needed to
find~$\hat{\domain}$.
The crucial step is to find the matrices~$\underline{M}$
and~$\overline{M}$.
To find these matrices, we adapt the idea from~\cite{BendottiEtAl2021} for
the binary case.
Denote by~$\lexmin(\cdot)$ and~$\lexmax(\cdot)$ the operators that determine
the lexicographically minimal and maximal elements of a set, respectively.
We claim that for the lexicographically minimal
element~$\underline{M}$, the~$j$-th column is
\[
\underline M^j = 
\begin{cases}
\lexmin \{ X \in \domain^j : X \succeq \underline M^{j+1} \},
& \text{if}\ j < q, \\
\lexmin \{ X \in \domain^j \},
& \text{otherwise}.
\end{cases}
\]
This can be computed iteratively, starting with the last column $j=q$,
and then iteratively reducing~$j$ until the first column.
The arguments for correctness are the same as
in~\cite[Thm.~1, Lem.~2]{BendottiEtAl2021}.
For this reason, we only describe how to compute the~$j$-th column.
Due to this iterative approach, 
when computing column $\underline{M}^{j}$
for $j > q$, column $\underline{M}^{j+1}$ is known.
The idea is to choose the entries of 
$\underline{M}^{j}$ minimally
such that $\underline{M}^{j} \succeq \underline{M}^{j+1}$ holds.
This resembles the propagation method of the previous section (\lexred),
by choosing the entries minimally such that the constraint holds
when restricted to the first elements, then increasing the 
vector sizes by one iteratively.
If this leads to a contradiction with the constraint, 
it is returned to the last step where the entry
was not fixed, and this entry is increased 
to repair feasibility of the constraint.

More precisely,
$\underline M^{j}$ is found by iterating~$i$ from $1$ to~$p$ 
as follows.
If there is a row index~${i' < i}$ 
with $\underline M_{i'}^j > \underline M_{i'}^{j + 1}$,
let~$\underline M_i^j \gets \min ( \domain_i^j )$.
This is possible, because row~$i'$ already guaranteed that~$\underline{M}^j
\succ \underline{M}^{j+1}$.
If no such index exists, we may assume that all preceding rows $i' < i$
have~${\underline M_i^j = \underline M_i^{j+1}}$ (otherwise, the~$j$-th
column cannot be lexicographically larger than column~$j+1$ as becomes
clear in the following).
In this case, denote~$S^i \define 
\{ x \in \domain_i^j : x \geq \underline M_i^{j+1} \}$.
On the one hand, 
if $\card{S^i} > 0$, set $\underline M_i^j \gets \min(S^i)$.
If this yields $\underline M_i^j > \underline M_i^{j + 1}$,
then stop the iteration, and for all $i'' > i$ 
set~$\underline M_{i''}^{j} \gets \min(\domain_{i''}^j)$.
This makes sure that~$\underline{M}^j$ is lexicographically strictly larger
than~$\underline{M}^{j+1}$.

On the other hand, 
if $\card{S^i} = 0$, we cannot enforce~$\underline{M}^j \succ
\underline{M}^{j+1}$ in row~$i$.
To ensure~$\underline{M}$ becomes the lexicographically smallest
element in~$O_{p \times q} \cap \domain$, we return to the largest $i' < i$
with~$\card{S^{i'}} > 1$ and enforce a lexicographic difference by setting
$\underline M_{i'}^j \gets \min\{ x \in \domain_{i'}^j : 
x > \underline M_{i'}^{j+1} \}$,
and, for all~$i'' > i'$, we assign~$\underline M_{i''}^{j} \gets \min(\domain_{i''}^j)$.
If no $i' < i$ exists with~$\card{S^{i'}} > 1$, column~$j$ cannot become
lexicographically at least as large as column~$j+1$.
That is, $\domain \cap O_{p \times q} = \emptyset$.

Analogously, one computes $\overline M$ by
\[
\overline{M}^j = 
\begin{cases}
\lexmax\{ X \in \domain^j : \overline M^{j - 1} \succeq X \}
& \text{if}\ j > 1,\ \text{and} \\
\lexmax\{ X \in \domain^j \}
& \text{otherwise}.
\end{cases}
\]
Since determining the~$j$-th column of~$\underline{M}$
and~$\overline{M}$ requires to iterate over its elements a
constant number of times,
for each element a constant number of comparisons and variable domain
reductions is executed.
The time for finding~$\underline{M}$ and~$\overline{M}$
is therefore~$\bigo(pq\tau)$, where~$\tau$ is again the time needed to reduce
variable domains.
Combining all arguments thus yields the following result regarding
orbitopal reduction.
\begin{theorem}
  Let~$\domain \subseteq \R^{p \times q}$.
  Orbitopal reduction finds the smallest~$\hat{\domain} \subseteq \R^{p
    \times q}$ such that~$\hat{\domain} \cap O_{p \times q} = \domain \cap
  O_{p \times q}$ holds in~$\bigo(pq\tau)$ time.
  In particular, if all variable domains are intervals, orbitopal reduction
  can be implemented to run in~$\bigo(pq)$ time.
\end{theorem}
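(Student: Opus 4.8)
The plan is to split the statement into a correctness part and a complexity part, and to reduce the correctness part entirely to Theorem~\ref{thm:genorbitopalfixing}. That theorem already expresses the inclusionwise smallest $\hat{\domain}$ with $\hat{\domain} \cap O_{p \times q} = \domain \cap O_{p \times q}$ in terms of the two matrices $\underline{M}(\domain)$ and $\overline{M}(\domain)$ and the row thresholds $i_j$. Hence it suffices to: (i) verify that the column-by-column procedure described above indeed outputs $\underline{M}(\domain)$ and $\overline{M}(\domain)$; (ii) verify that the thresholds $i_j$ and the final intersections $\hat{\domain}_i^j = \domain_i^j \cap [\underline{M}_i^j, \overline{M}_i^j]$ are formed as in Theorem~\ref{thm:genorbitopalfixing}; and (iii) bound the running time of all of this by $\bigo(pq\tau)$.

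For (i), I would argue by downward induction on the column index $j$ that the procedure computes $\underline{M}^j = \lexmin\{ X \in \domain^j : X \succeq \underline{M}^{j+1}\}$ (and $\underline{M}^q = \lexmin\{X \in \domain^q\}$), mirroring \cite[Thm.~1, Lem.~2]{BendottiEtAl2021}; the base case $j=q$ is immediate, and the inductive step uses that a lexicographically sorted matrix is obtained exactly when consecutive columns are lexicographically non-increasing. The heart of the step is showing that the greedy row sweep realizes this $\lexmin$: while all previous rows agree with $\underline{M}^{j+1}$ one is forced to pick the smallest feasible value $\min S^i$, with $S^i = \{x \in \domain_i^j : x \geq \underline{M}_i^{j+1}\}$, to keep lexicographic equality open; once a strict increase has occurred the remaining rows may be set to $\min(\domain_i^j)$; and if some $S^i$ is empty, equality can no longer be repaired in row $i$, so one undoes the choices and forces a strict increase in the last row $i' < i$ that still admits one ($\card{S^{i'}} > 1$), filling rows $> i'$ with minima. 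One then checks that the resulting column is feasible ($\succeq \underline{M}^{j+1}$) and that any other feasible column is lexicographically not smaller, giving the $\lexmin$ claim; if no such $i'$ exists, one correctly reports $\domain \cap O_{p \times q} = \emptyset$. The computation of $\overline{M}$ is the mirror image, swapping $\min \leftrightarrow \max$ and the direction of $\succeq$. I would also note that each column of $\underline{M}$ uses only $\min$-operations and each column of $\overline{M}$ only $\max$-operations, so the infinitesimal symbol $\varepsilon$ always enters with a fixed sign; the symbolic arithmetic is therefore well-defined and, when $\varepsilon$ is replaced by $0$, still yields valid (slightly weaker) reductions as claimed earlier.

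For (iii), I would observe that computing one column of $\underline{M}$ is a single left-to-right pass over the $p$ rows doing $\bigo(1)$ comparisons and $\bigo(1)$ domain operations per row, each costing $\tau$, plus at most one backtracking event, which also costs $\bigo(p\tau)$ and after which the rest of the column is filled with minima without further work; hence $\bigo(p\tau)$ per column and $\bigo(pq\tau)$ for all of $\underline{M}$, and likewise for $\overline{M}$. Each threshold $i_j$ is then found by a single $\bigo(p)$ scan comparing $\underline{M}^j$ with $\overline{M}^j$, and forming $\hat{\domain}$ intersects each of the $pq$ variable domains with an interval at cost $\tau$; summing gives $\bigo(pq\tau)$ in total, and the correctness of this $\hat{\domain}$ is exactly Theorem~\ref{thm:genorbitopalfixing} (via Lemmas~\ref{lem:genorbitopalfixing:tight} and~\ref{lem:genorbitopalfixing:valid}). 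When all variable domains are intervals, $\min$, $\max$, membership tests, and intersection with an interval are each $\bigo(1)$, so $\tau = \bigo(1)$ and the bound specializes to $\bigo(pq)$.

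The step I expect to be the main obstacle is (i): making the greedy-with-backtracking argument fully rigorous for general (non-binary, possibly non-interval, infinitesimally perturbed) domains — in particular justifying that a single backtrack to the last row with $\card{S^{i'}} > 1$ suffices, that the column it produces is genuinely the lexicographic minimum, and that emptiness of $\domain \cap O_{p \times q}$ is detected correctly. Most of this parallels the binary treatment in \cite{BendottiEtAl2021}; the new content is the adaptation to arbitrary variable domains and the $\varepsilon$-bookkeeping, which is why I would phrase the column recursion abstractly in terms of $\lexmin$ over $\domain^j$ and only then instantiate the row sweep.
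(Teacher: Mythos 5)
Your proposal follows essentially the same route as the paper: correctness is reduced to Theorem~\ref{thm:genorbitopalfixing} (via Lemmas~\ref{lem:genorbitopalfixing:tight} and~\ref{lem:genorbitopalfixing:valid}), the extremal matrices $\underline{M}$ and $\overline{M}$ are computed by the column recursion $\underline{M}^j = \lexmin\{X \in \domain^j : X \succeq \underline{M}^{j+1}\}$ realized by a greedy row sweep with a single backtrack (with the paper likewise deferring the full induction to the binary-case argument of Bendotti~\etal), and the $\bigo(pq\tau)$ bound comes from a constant number of passes per column. The plan is correct and matches the paper's argument, including the observation that the $\varepsilon$-symbol only ever appears with a fixed sign in each of the $\min$- and $\max$-computations.
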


\subsubsection{Dynamic settings}

Similar to \lexred, also orbitopal reduction can be used to propagate
\SHCs~$\sigma_\beta(x) \succeq \sigma_\beta\gamma(x)$ for
permutations~$\gamma$ from a group~$\Gamma$ of orbitopal symmetries.
The only requirement is that~$\sigma_\beta$ is compatible with the matrix
interpretation of a solution~$x$, which can be achieved by using the
symmetry prehandling structure of Example~\ref{ex:orbitopalfixing}.
In this case, the static orbitopal reduction algorithm is only applied to
the variables ``seen'' by~$\sigma_\beta(x) \succeq \sigma_\beta\gamma(x)$.

Note that this symmetry prehandling structure admits some degrees of
freedom in selecting~$\varphi_\beta$.
If~$\varphi_\beta = \id$ for all $\beta \in \bnbnodes$, this resembles the
adapted version of orbitopal fixing as mentioned in
Section~\ref{sec:orbitopalfixing}.
But also other choices are possible as we will discuss in
Section~\ref{sec:num}.

\subsection{Variable ordering derived from branch-and-bound}
\label{sec:gen:orbitalreduction}
A natural question is whether also generalizations for isomorphism pruning and
\orbitalfixing exist.
The main challenge is that after branching on general variables, they are
not necessarily fixed (in contrast to the binary setting).
Thus, stabilizer computations as discussed in Section~\ref{sec:overview} might
not apply in the generalized setting.
Inspired by \orbitalfixing, we present a way to reduce variable domains of
arbitrary variables based on symmetry,
called \emph{orbital reduction}.

For vectors $x$ and $y$ of equal length $m$, we write $x \leq y$
if $x_i \leq y_i$ for all $i \in [m]$.
Let~$\beta \in \bnbnodes$ be a node of the branch-and-bound tree,
$W^\beta \define 
\{ x \in \mathds R^n : 
\sigma_\beta(x) \succeq \sigma_\beta(\delta(x))\ 
\text{for all}\ \delta \in \Gamma 
\}$,
and~$\Delta^\beta \define \{
\gamma \in \Gamma
:
\sigma_\beta(x) \leq \sigma_\beta \gamma(x)\
\text{for all}\ x \in \feasregion(\beta) \cap W^\beta
\}
$
be a group of symmetries.
Similar to Section~\ref{sec:overviewtree},
we intend to use $\Delta^\beta$ to find \VDRs.
We first show that this is indeed a group.

\begin{lemma}
Let~$\beta$ be a node of a \bb-tree using single variable branching
with symmetry prehandling structure of Example~\ref{ex:vardynamic}.
Then, $\Delta^\beta$ is a group.
\end{lemma}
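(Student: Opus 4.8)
The plan is to argue that $\Delta^\beta$ is a subgroup of the finite group $\Gamma \leq \symmetricgroup{n}$ by checking only that it is nonempty and closed under composition; the standard fact that a nonempty multiplicatively closed subset of a finite group is a subgroup then finishes the job, so I would not verify inverses separately. Nonemptiness is immediate, since $\sigma_\beta(x) \leq \sigma_\beta(\id(x))$ holds for every $x$, whence $\id \in \Delta^\beta$.

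The key preliminary observation I would establish is that on $\feasregion(\beta) \cap W^\beta$ the componentwise inequality defining $\Delta^\beta$ is in fact an equality. For $\gamma \in \Delta^\beta$ and $x \in \feasregion(\beta) \cap W^\beta$, the relation $\sigma_\beta(x) \leq \sigma_\beta(\gamma(x))$ (componentwise) implies $\sigma_\beta(x) \preceq \sigma_\beta(\gamma(x))$ in the lexicographic order, while membership $x \in W^\beta$ applied with $\delta = \gamma$ gives $\sigma_\beta(x) \succeq \sigma_\beta(\gamma(x))$. Since the lexicographic order is total, these two relations together force $\sigma_\beta(x) = \sigma_\beta(\gamma(x))$.

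With this, closure under composition would go as follows. Let $\gamma_1, \gamma_2 \in \Delta^\beta$ and take any $x \in \feasregion(\beta) \cap W^\beta$. By the observation, $\sigma_\beta(\gamma_2(x)) = \sigma_\beta(x)$, so Condition~\eqref{cond:permutationcondition} — which the prehandling structure of Example~\ref{ex:vardynamic} satisfies — yields $\gamma_2(x) \in \feasregion(\beta)$. I would then check $\gamma_2(x) \in W^\beta$: for any $\delta \in \Gamma$ one has $\delta\gamma_2 \in \Gamma$, hence $\sigma_\beta(\delta(\gamma_2(x))) = \sigma_\beta((\delta\gamma_2)(x)) \preceq \sigma_\beta(x) = \sigma_\beta(\gamma_2(x))$ by $x \in W^\beta$. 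Thus $\gamma_2(x) \in \feasregion(\beta) \cap W^\beta$, and applying the defining inequality of $\Delta^\beta$ for $\gamma_1$ at the point $\gamma_2(x)$ gives $\sigma_\beta(x) = \sigma_\beta(\gamma_2(x)) \leq \sigma_\beta(\gamma_1(\gamma_2(x))) = \sigma_\beta((\gamma_1\gamma_2)(x))$ componentwise. As $x$ was arbitrary, $\gamma_1\gamma_2 \in \Delta^\beta$, and the subgroup claim follows.

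The main obstacle I anticipate is precisely the verification that $\gamma_2(x)$ remains in $\feasregion(\beta) \cap W^\beta$: the $W^\beta$-part is a short manipulation with the group operation and the lexicographic order, but the feasibility part genuinely relies on Condition~\eqref{cond:permutationcondition}, which is why the lemma is stated for the branching-based prehandling structure of Example~\ref{ex:vardynamic} (for which that condition has already been established in the excerpt). Everything else is routine bookkeeping relating componentwise and lexicographic comparisons.
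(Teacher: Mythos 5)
Your proposal is correct and follows essentially the same route as the paper's proof: reduce to closure under composition (with finiteness of $\Gamma \leq \symmetricgroup{n}$ supplying inverses), observe that the componentwise inequality combined with membership in $W^\beta$ forces $\sigma_\beta(x) = \sigma_\beta(\gamma_2(x))$, use Condition~\eqref{cond:permutationcondition} to get $\gamma_2(x) \in \feasregion(\beta)$ and a short group-theoretic manipulation to get $\gamma_2(x) \in W^\beta$, and then apply the defining inequality of $\Delta^\beta$ for $\gamma_1$ at the point $\gamma_2(x)$. The only cosmetic difference is that you make the ``nonempty finite set closed under composition'' argument explicit where the paper merely remarks that identity and inverses follow implicitly.
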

\begin{proof}
  Recall that we assume~$\Gamma \leq \symmetricgroup{n}$  in this
  section.
  Therefore, both~$\Gamma$ and~$\Delta^\beta \subseteq \Gamma$ are finite.
To show that it is a group, it suffices to show that
compositions of elements of $\Delta^\beta$ are also contained
therein. The identity and inverses follow implicitly.

Let $\gamma_1, \gamma_2 \in \Delta^\beta$,
and suppose $x \in \feasregion(\beta) \cap W^\beta$.
By definition of $\Delta^\beta$,
we have $\sigma_\beta(x) \leq \sigma_\beta(\gamma_2(x))$.
Since $\gamma_2 \in \Delta^\beta \leq \Gamma$
and $x \in W^\beta$, we have
$\sigma_\beta(x) \succeq \sigma_\beta(\gamma_2(x))$.
Note that 
$\sigma_\beta(x) \succeq \sigma_\beta(\gamma_2(x))$
and~%
$\sigma_\beta(x) \leq \sigma_\beta(\gamma_2(x))$
imply~%
$\sigma_\beta(x) = \sigma_\beta(\gamma_2(x))$.
Since the correctness conditions are satisfied for Example~\ref{ex:vardynamic},
due to Condition~\eqref{cond:permutationcondition},
the properties $x \in \feasregion(\beta)$ and 
$\sigma_\beta(x) = \sigma_\beta(\gamma_2(x))$
imply that~$\gamma_2(x) \in \feasregion(\beta)$ holds.

Since $x \in W^\beta$,
for all~$\delta \in \Gamma$
we have~%
$\sigma_\beta(\gamma_2(x)) = \sigma_\beta(x) 
\succeq \sigma_\beta(\delta(x))$.
Because $\gamma_2$ is a group element of $\Gamma$,
we thus also have 
$\sigma_\beta(\gamma_2(x)) 
\succeq \sigma_\beta(\delta \circ \gamma_2(x))$
for all $\delta \in \Gamma$,
meaning that $\gamma_2(x) \in W^\beta$.
Summarizing, we have $\sigma_\beta(x) = \sigma_\beta(\gamma_2(x))$
and $\gamma_2(x) \in \feasregion(\beta) \cap W^\beta$.
By analogy, the same results hold for~$\gamma_1(x)$.

Since $\gamma_2(x) \in \feasregion(\beta) \cap W^\beta$
and $\gamma_1 \in \Delta^\beta$,
the definition of $\Delta^\beta$ yields
$\sigma_\beta(\gamma_2(x)) \leq \sigma_\beta(\gamma_1 \circ \gamma_2(x))$.
Using the same reasoning as above, $\gamma_2(x) \in W^\beta$ 
implies~%
$\sigma_\beta(\gamma_2(x)) \succeq \sigma_\beta(\gamma_1 \circ \gamma_2(x))$,
so~$\sigma_\beta(\gamma_2(x)) = \sigma_\beta(\gamma_1 \circ \gamma_2(x))$.
We thus find that 
$\sigma_\beta(x) = \sigma_\beta(\gamma_2(x)) 
= \sigma_\beta(\gamma_1 \circ \gamma_2(x))$,
which implies $\gamma_1 \circ \gamma_2 \in \Delta^\beta$.
\end{proof}

We show two feasible reductions that are based on $\Delta^\beta$.
The first reduction shows that the variable domains of the variables
in the $\Delta^\beta$-orbit of the branching variable can possibly be tightened.
To this end, 
denote the orbit of $i$ in $\Delta^\beta$ by~$O_i^\beta \define \{
\gamma(i) : \gamma \in \Delta^\beta \}$.
If the branching decision after node $\beta$
decreased the upper bound of variable~$x_i$ for some~$i \in [n]$,
a valid \VDR is to decrease the upper bounds of~$x_j$ for all $j \in O^\beta_i$
to the same value as we show next.

\begin{lemma}[Orbital symmetry handling]
\label{lem:branchorbit}
Let $\bnbtree = (\bnbnodes, \bnbedges)$ 
be a \bb-tree using single variable branching
with symmetry prehandling structure of Example~\ref{ex:vardynamic}.
Let $\omega \in \bnbnodes$ be a child of $\beta \in \bnbnodes$
where~$x_i$ is the branching variable for some $i \in [n]$.
Then, at node~$\omega$, each solution~$x \in \feasregion(\omega)$ 
satisfying~${\sigma_\omega(x) \succeq \sigma_\omega(\delta(x))}$
for all $\delta \in \Gamma$
(i.e., \eqref{eq:main} for node $\omega$) also
satisfies~$x_i \geq x_j$ for all~$j \in O_i^\beta$.
\end{lemma}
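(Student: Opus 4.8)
The plan is as follows. Under the prehandling structure of Example~\ref{ex:vardynamic} one always has $\varphi_\beta=\varphi_\omega=\id$, so $\sigma_\beta(x)=\restrict{\pi_\beta(x)}{[m_\beta]}$ and similarly for $\omega$; the first move is to locate the branching variable $x_i$ as a fixed coordinate of $\sigma_\omega(x)$, and the second is to read $x_i\ge x_j$ off from the inequality $\sigma_\omega(x)\succeq\sigma_\omega(\delta(x))$ for a suitably chosen $\delta\in\Delta^\beta$. Concretely, since the step from $\beta$ to $\omega$ is a proper branching on $x_i$, exactly one of two cases occurs: if $i$ has not been branched on before, then $m_\omega=m_\beta+1$, $\restrict{\sigma_\omega(x)}{[m_\beta]}=\sigma_\beta(x)$, and $\sigma_\omega(x)_{m_\omega}=x_i$; if $i$ has already occurred as a branching variable, then $\sigma_\omega=\sigma_\beta$ and $x_i$ is already one of the first $m_\beta$ entries of $\sigma_\omega(x)$. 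In either case there is a coordinate $k^\star$ with $\pi_\omega^{-1}(k^\star)=i$, so $\sigma_\omega(x)_{k^\star}=x_i$, where $k^\star$ is at most $m_\beta$ or equal to $m_\omega=m_\beta+1$; moreover $\sigma_\omega$ and $\sigma_\beta$ agree on coordinates $1,\dots,m_\beta$ by Condition~\eqref{cond:ffunc}.

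Next I would verify that $x\in\feasregion(\beta)\cap W^\beta$, so that the defining property of $\Delta^\beta$ applies to it. Membership $x\in\feasregion(\beta)$ is clear since $\omega\in\chi_\beta$ and $\feasregion(\omega)\subseteq\feasregion(\beta)$. For $x\in W^\beta$, I use the elementary fact that restricting a lexicographic inequality between two vectors to a common prefix preserves it: applying this to the hypothesis $\sigma_\omega(x)\succeq\sigma_\omega(\delta(x))$ and using that the first $m_\beta$ coordinates of $\sigma_\omega$ reproduce $\sigma_\beta$ gives $\sigma_\beta(x)\succeq\sigma_\beta(\delta(x))$ for all $\delta\in\Gamma$.

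Then, fixing $j\in O_i^\beta$, there is $\gamma\in\Delta^\beta$ with $\gamma(i)=j$, and since $\Delta^\beta$ is a group (by the preceding lemma) also $\gamma^{-1}\in\Delta^\beta$. Applying the definition of $\Delta^\beta$ to $\gamma^{-1}$ at the point $x\in\feasregion(\beta)\cap W^\beta$ gives $\sigma_\beta(x)\le\sigma_\beta(\gamma^{-1}(x))$ componentwise, whereas $x\in W^\beta$ gives $\sigma_\beta(x)\succeq\sigma_\beta(\gamma^{-1}(x))$ lexicographically; a vector that is componentwise below another yet lexicographically above it must equal it, so $\sigma_\beta(x)=\sigma_\beta(\gamma^{-1}(x))$. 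Hence $\sigma_\omega(x)$ and $\sigma_\omega(\gamma^{-1}(x))$ coincide on coordinates $1,\dots,m_\beta$. Taking $\delta=\gamma^{-1}$ in the hypothesis yields $\sigma_\omega(x)\succeq\sigma_\omega(\gamma^{-1}(x))$, and since these two vectors agree on coordinates $1,\dots,m_\beta$, at the coordinate $k^\star$ from the first step (which is at most $m_\beta$, or equals $m_\omega=m_\beta+1$) we obtain $\sigma_\omega(x)_{k^\star}\ge\sigma_\omega(\gamma^{-1}(x))_{k^\star}$. The left side is $x_i$, and because $\pi_\omega^{-1}(k^\star)=i$ and $\varphi_\omega=\id$, the right side is $(\gamma^{-1}(x))_i=x_{\gamma(i)}=x_j$. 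Therefore $x_i\ge x_j$, which is the claim.

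The only real obstacle is the permutation bookkeeping, in two places: getting the direction of the symmetry right so that $\sigma_\omega(\gamma^{-1}(x))_{k^\star}$ comes out as $x_{\gamma(i)}=x_j$ (this dictates applying the definition of $\Delta^\beta$ to $\gamma^{-1}$ rather than $\gamma$, and is exactly where the group property of $\Delta^\beta$ is used), and packaging the ``new branching variable'' and ``repeated branching variable'' cases into the single statement that $x_i$ occupies a coordinate $k^\star$ of $\sigma_\omega(x)$ on which $\sigma_\omega(x)$ cannot be lexicographically beaten by $\sigma_\omega(\gamma^{-1}(x))$.
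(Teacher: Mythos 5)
Your proof is correct and follows essentially the same route as the paper's: establish $x\in\feasregion(\beta)\cap W^\beta$, combine the componentwise inequality from the definition of $\Delta^\beta$ with the lexicographic inequality from $W^\beta$ to force $\sigma_\beta(x)=\sigma_\beta(\gamma^{-1}(x))$, and then read $x_i\geq x_j$ off the coordinate of $\sigma_\omega$ carrying the branching variable. The only (harmless) cosmetic difference is that you apply the argument to $\gamma^{-1}$ for a chosen $j=\gamma(i)$, whereas the paper argues for arbitrary $\gamma\in\Delta^\beta$ and converts $x_{\gamma^{-1}(i)}$ into the orbit statement at the very end.
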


\begin{proof}
Let $\gamma \in \Delta^\beta$
and let $x \in \feasregion(\omega)$ satisfy
$\sigma_\omega(x) \succeq \sigma_\omega(\delta(x))$ 
for all $\delta \in \Gamma$.
Since $\omega$ is a child of $\beta$, 
we have~$x \in \feasregion(\omega) \subseteq \feasregion(\beta)$.
Also, for all $\delta \in \Gamma$
we have $\sigma_\omega(x) \succeq \sigma_\omega(\delta(x))$,
so due to
the symmetry prehandling structure of Example~\ref{ex:vardynamic},
we also have~$\sigma_\beta(x) \succeq \sigma_\beta(\delta(x))$,
meaning that $x \in W^\beta$.
Hence, we have $x \in \feasregion(\beta) \cap W^\beta$.
By definition of $\Delta^\beta$,
we thus have $\sigma_\beta(x) \leq \sigma_\beta(\gamma(x))$.
Recall that due to Example~\ref{ex:vardynamic},
we have for all $\delta \in \Gamma$ that
$\sigma_\beta(x) \succeq \sigma_\beta(\delta(x))$.
Since $\gamma \in \Delta^\beta \leq \Gamma$,
therefore~$\sigma_\beta(x) \leq \sigma_\beta(\gamma(x))$
and
$\sigma_\beta(x) \succeq \sigma_\beta(\gamma(x))$ hold,
implying 
$\sigma_\beta(x) = \sigma_\beta(\gamma(x))$.
Denote this result by ($\dagger$).

Due to Example~\ref{ex:vardynamic}, we have
\[
\sigma_\omega(x) =
\begin{cases}
\sigma_\beta(x),
& \text{if}\ i \in (\pi_\beta \varphi_\beta)^{-1}(m_\beta)
\ \text{(i.e., variable $x_i$ appears in $\sigma_\beta(x)$), and}\\
\binom{\sigma_\beta(x)}{x_i},
& \text{otherwise.}
\end{cases}
\]
As such, \SHC $\sigma_\omega (x) \succeq \sigma_\omega (\gamma(x))$
is equivalent to either
$\sigma_\beta(x) \succ \sigma_\beta (\gamma(x))$,
or both~$\sigma_\beta(x) = \sigma_\beta (\gamma(x))$
and $x_i \geq \gamma(x)_i$.
Note that this statement is the case independent from whether 
entry $i$  is branched on before or not 
(i.e., whether $i \in (\pi_\beta\varphi_\beta)^{-1}([m_\beta])$ or not).
Using ($\dagger$), the first of the two options cannot hold,
so we must have~$x_i \geq \gamma(x)_i = x_{\gamma^{-1}(i)}$.
Consequently, $x_i \geq x_{\gamma^{-1}(i)}$ is a valid \SHC
for~$\gamma \in \Delta^\beta$.
Thus, for all $j \in O_i^\beta$, we can propagate $x_i \geq x_j$.
\end{proof}

Second, recall our assumption that any \VDR that is not based on our
symmetry framework needs to be symmetry compatible, see
Section~\ref{sec:framework}.
In practice, however, a solver might not find all symmetric \VDRs, e.g.,
due to iteration limits.
The following lemma allows us to find missing (but not necessarily all)
\VDRs based on symmetries, which corresponds to orbital fixing as discussed
in~\cite{pfetsch2019computational} without the restriction to binary
variables.

\begin{lemma}
\label{lem:intersection}
Let~$\beta$ be a node of a \bb-tree using single variable branching
with symmetry prehandling structure of Example~\ref{ex:vardynamic}.
Then, when \SHCs~\eqref{eq:main} are enforced
(i.e., solutions are in $W^\beta$),
a valid \VDR is to reduce the domain of~$x_i$ to the
intersection of all variable domains $x_j$ for $j \in O_i^\beta$.
\end{lemma}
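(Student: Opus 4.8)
The plan is to show that every $x \in \feasregion(\beta) \cap W^\beta$ already satisfies $x_i \in \domain_j$ for each $j \in O_i^\beta$. Once this is established, intersecting the domain of~$x_i$ with all the domains~$\domain_j$, $j \in O_i^\beta$, removes no solution from $\feasregion(\beta) \cap W^\beta$, hence is a valid \VDR. Note that $\id \in \Delta^\beta$ gives $i \in O_i^\beta$, so the intersection is contained in~$\domain_i$ and the reduction never enlarges the domain.

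First I would fix $j \in O_i^\beta$ and choose $\gamma \in \Delta^\beta$ with $\gamma(i) = j$; such a $\gamma$ exists by definition of the orbit. Then, for an arbitrary $x \in \feasregion(\beta) \cap W^\beta$, I would argue $\sigma_\beta(x) = \sigma_\beta(\gamma(x))$ using exactly the sandwich argument already employed in the proof that $\Delta^\beta$ is a group and in the proof of Lemma~\ref{lem:branchorbit}: membership $x \in W^\beta$ yields $\sigma_\beta(x) \succeq \sigma_\beta(\gamma(x))$ because $\gamma \in \Gamma$, while $\gamma \in \Delta^\beta$ together with $x \in \feasregion(\beta) \cap W^\beta$ yields $\sigma_\beta(x) \leq \sigma_\beta(\gamma(x))$; combining the two relations forces equality.

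Next I would invoke Condition~\eqref{cond:permutationcondition}, which holds for the symmetry prehandling structure of Example~\ref{ex:vardynamic}: since $x \in \feasregion(\beta)$ and $\sigma_\beta(x) = \sigma_\beta(\gamma(x))$, also $\gamma(x) \in \feasregion(\beta)$. Because each variable domain $\domain_k$ covers the projection of $\feasregion(\beta)$ onto coordinate~$k$, this gives $\gamma(x)_k \in \domain_k$ for all $k \in [n]$. Taking $k = j = \gamma(i)$ and using $\gamma(x)_k = x_{\gamma^{-1}(k)}$, we obtain $\gamma(x)_j = x_{\gamma^{-1}(\gamma(i))} = x_i$, hence $x_i \in \domain_j$. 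As $j \in O_i^\beta$ was arbitrary, $x_i \in \bigcap_{j \in O_i^\beta} \domain_j$, which concludes the argument.

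I do not expect a genuine obstacle here, since the statement rests on the two ingredients already used twice in this subsection: the $\succeq$/$\leq$ sandwich that collapses to $\sigma_\beta$-equality, and Condition~\eqref{cond:permutationcondition} promoting that equality to feasibility of $\gamma(x)$. The only points requiring a little care are the index bookkeeping $\gamma(x)_{\gamma(i)} = x_i$ and the observation that $\feasregion(\beta)$ is contained in the box of current variable domains; both are routine.
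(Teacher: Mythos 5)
Your proposal is correct and follows essentially the same route as the paper's proof: the $\leq$/$\succeq$ sandwich forcing $\sigma_\beta(x) = \sigma_\beta(\gamma(x))$, then Condition~\eqref{cond:permutationcondition} to conclude $\gamma(x) \in \feasregion(\beta)$, and finally the observation that $\gamma(x)_{\gamma(i)} = x_i$ places $x_i$ in $\domain_j$. Your write-up is in fact slightly more explicit than the paper's on the index bookkeeping and on why the intersection never enlarges the domain.
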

\begin{proof}
Let $x \in \feasregion(\beta)$ and let~$i \in [n]$.
Let~$j \in O_i^\beta$, i.e., there exists~$\gamma \in \Delta^\beta$
with~$\gamma(i) = j$.
As $\gamma \in \Delta^\beta$, 
$\sigma_\beta(x) \leq \sigma_\beta(\gamma(x))$ holds.
Since \SHCs~\eqref{eq:main} are enforced,
it must as well hold that
$\sigma_\beta(x) \succeq \sigma_\beta(\gamma(x))$,
and thus
$\sigma_\beta(x) = \sigma_\beta(\gamma(x))$.
Since Example~\ref{ex:vardynamic} satisfies the correctness conditions,
Condition~\eqref{cond:permutationcondition}
yields~$\gamma(x) \in \feasregion(\beta)$.
Thus, $x_i$ is not only contained in the domain of variable~$i$, but also
in the domain of variable~$x_{\gamma(i)}$.
For this reason, the domain of~$x_i$ can be restricted to the
intersection of the domains for all variables $x_j$ for all $j \in O_i^\beta$.
\end{proof}
In practice, Lemma~\ref{lem:branchorbit} and~\ref{lem:intersection} cannot
be used immediately as the orbits depend on~$\Delta^\beta$, which cannot be
computed easily as it depends on~$\feasregion(\beta)$ and $W^\beta$.
Instead, we base ourselves on a 
suitably determined subgroup~$\tilde \Delta^\beta \leq \Delta^\beta$,
and apply the reductions induced by that subgroup.
Because the reductions are based on variables in the same orbit,
and orbits of the subgroup are subsets of the orbits of the larger group,
\VDRs found by $\tilde \Delta^\beta$ would also be found by~$\Delta^\beta$.

For all $i \in [n]$, 
let~$\domain_i^\beta \subseteq \mathds R$
be the (known) domain of variable~$x_i$ at node~$\beta$.
In particular, we thus have~$
\feasregion(\beta) \cap W^\beta \subseteq
\feasregion(\beta) \subseteq 
\bigtimes_{i \in [n]} \domain_i^\beta
$.
By replacing $\feasregion(\beta) \cap W^\beta$ 
in the definition of $\Delta^\beta$
by $\bigtimes_{i \in [n]} \domain_i^\beta$,
we get a subset of symmetries: 
$
\{ \gamma \in \Gamma : 
\sigma_\beta(x) \leq \sigma_\beta(\gamma(x))\ 
\text{for all}\ 
x \in \bigtimes_{i \in[n]} \domain_i^\beta
\}
\subseteq \Delta^\beta
$.
In particular, using (a subset of) the left set
as generating set, one finds a permutation group 
that is a subgroup of~$\Delta^\beta$.
For the computational results 
shown in Section~\ref{sec:num} we discuss the subset selection
procedure that we chose in our implementation.

We finish this section by showing that, in binary problems,
\VDRs yielded by \orbitalfixing from 
Section~\ref{sec:overviewtree} are also yielded
by the generalized setting of 
Lemma~\ref{lem:branchorbit} and~\ref{lem:intersection}.

\begin{lemma}
Denote $\Delta^{\smash\beta}_{\smash{\rm bin}}$
as the group $\Delta^{\smash\beta}$  
defined in Section~\ref{sec:overviewtree},
and $\Delta^{\smash\beta}_{\smash{\rm gen}}$
as the group with the same symbol defined here.
If the symmetry group acts on binary variables exclusively,
then $\Delta^\beta_{\rm bin} \leq \Delta^\beta_{\rm gen}$.
\end{lemma}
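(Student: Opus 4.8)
The plan is to prove the stronger set inclusion $\Delta^\beta_{\rm bin} \subseteq \Delta^\beta_{\rm gen}$; since $\Delta^\beta_{\rm bin}$ is a stabilizer subgroup of $\Gamma$ and $\Delta^\beta_{\rm gen}$ is a subgroup of $\Gamma$ by the preceding lemma, this inclusion immediately yields $\Delta^\beta_{\rm bin} \leq \Delta^\beta_{\rm gen}$. So I would fix an arbitrary $\gamma \in \Delta^\beta_{\rm bin}$, i.e.\ $\gamma(B_1^\beta) = B_1^\beta$, and show that $\sigma_\beta(x) \leq \sigma_\beta(\gamma(x))$ holds for every $x \in \feasregion(\beta) \cap W^\beta$; I would in fact prove this for every $x \in \feasregion(\beta)$, which is more than enough.

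The first step is to make the structure of $\sigma_\beta$ explicit. Since we are in the binary setting and use the symmetry prehandling structure of Example~\ref{ex:vardynamic}, every branching decision on the path to $\beta$ fixes a distinct variable, so $\varphi_\beta = \id$, $m_\beta$ equals the depth $m$ of $\beta$, and $\sigma_\beta$ restricts its argument, up to a reordering of entries, to the branching variables $B_0^\beta \cup B_1^\beta$. Writing $\sigma_\beta(x) = (x_{j_1}, \dots, x_{j_m})$ with $\{j_1, \dots, j_m\} = B_0^\beta \cup B_1^\beta$, the permutation convention $\gamma(x)_i = x_{\gamma^{-1}(i)}$ gives $\sigma_\beta(\gamma(x)) = (x_{\gamma^{-1}(j_1)}, \dots, x_{\gamma^{-1}(j_m)})$. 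Moreover, any $x \in \feasregion(\beta)$ respects all branching decisions, so $x_i = 1$ for $i \in B_1^\beta$ and $x_i = 0$ for $i \in B_0^\beta$.

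The second step is to check the componentwise inequality position by position. Fix $k \in [m]$ and consider the branching variable $j_k$. If $j_k \in B_0^\beta$, then $x_{j_k} = 0$, and since $x$ is binary, $x_{\gamma^{-1}(j_k)} \geq 0 = x_{j_k}$. If $j_k \in B_1^\beta$, then because $\gamma$ stabilizes the finite set $B_1^\beta$ so does $\gamma^{-1}$, hence $\gamma^{-1}(j_k) \in B_1^\beta$ and therefore $x_{\gamma^{-1}(j_k)} = 1 = x_{j_k}$. In both cases $x_{j_k} \leq x_{\gamma^{-1}(j_k)}$, so $\sigma_\beta(x) \leq \sigma_\beta(\gamma(x))$; as $x \in \feasregion(\beta)$ was arbitrary (and $\feasregion(\beta) \cap W^\beta \subseteq \feasregion(\beta)$), this shows $\gamma \in \Delta^\beta_{\rm gen}$ and concludes the argument.

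There is essentially no hard step: the points to be careful about are unwinding $\sigma_\beta(\gamma(x))$ with the correct permutation convention and the elementary observation that $\gamma(B_1^\beta) = B_1^\beta$ forces $\gamma^{-1}(B_1^\beta) = B_1^\beta$; everything else reduces to the binarity of the variables and to $\feasregion(\beta)$ pinning down the branching variables. The only genuinely structural input is that $\sigma_\beta$ from Example~\ref{ex:vardynamic} is a reordering-plus-restriction of the solution vector onto the branching variables, which is immediate from that example's definition with $\varphi_\beta = \id$.
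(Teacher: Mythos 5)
Your proposal is correct and follows essentially the same route as the paper's proof: both show the stronger statement that $\sigma_\beta(x) \leq \sigma_\beta(\gamma(x))$ for all $x \in \feasregion(\beta)$ by checking entries branched to $1$ (where the stabilizer property forces equality) and entries branched to $0$ (where binarity gives the inequality). Your write-up is merely more explicit about the permutation convention and the step $\gamma(B_1^\beta) = B_1^\beta \Rightarrow \gamma^{-1}(B_1^\beta) = B_1^\beta$, which the paper leaves implicit.
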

\begin{proof}
In binary problems, branching on variables fixes their values.
As such, because vector $\sigma_\beta(x)$ contains all branched variables,
it is the same for all~${x \in \feasregion(\beta)}$.
Suppose $\gamma \in \Delta^\beta_{\rm bin}$,
i.e., $\gamma \in \Gamma$ and~$\gamma(B_1^\beta) = B_1^\beta$.
For all $i \in [m_\beta]$, with
$\sigma_\beta(x)_i = 1$,
we have $\sigma_\beta(\gamma(x))_i = 1$
for all $x \in \feasregion(\beta)$.
Similarly, for all $i \in [m_\beta]$
with $\sigma_\beta(x)_i = 0$,
we have $\sigma_\beta(\gamma(x))_i \geq 0$.
This means that for all~$x \in \feasregion(\beta)$
we have $\sigma_\beta(x) \leq \sigma_\beta(\gamma(x))$.
This holds in particular for all $x \in \feasregion(\beta) \cap W^\beta$,
i.e., $\gamma \in \Delta^\beta_{\rm gen}$.
\end{proof}

By the \orbitalfixing rule described in Section~\ref{sec:overviewtree},
for all variable indices $i$ where $x_i$ is branched to zero,
all variables $x_j$ with $j$ in the orbit of $i$ in 
$\Delta^\beta_{\smash{\rm bin}}$ can be fixed to zero, as well.
Because $\Delta^\beta_{\smash{\rm bin}} 
\leq \Delta^\beta_{\smash{\rm gen}}$,
every orbit of $\Delta^\beta_{\smash{\rm bin}}$
is contained in an orbit of $\Delta^\beta_{\smash{\rm gen}}$.
As such, if $x_i$ is the branching variable at the present node, 
this is implied by~$x_i \geq x_j$ in Lemma~\ref{lem:branchorbit}.
Otherwise, this is implied by Lemma~\ref{lem:intersection}.

\section{Computational study}
\label{sec:num}

To assess the effectiveness of our methods, we compare the running times
of the implementations of the various dynamic symmetry handling methods
of Section~\ref{sec:cast} (in the regime of Examples~\ref{ex:vardynamic}
and~\ref{ex:orbitopalfixing}) to similar existing methods.
To this end, we make use of diverse testsets.
\begin{itemize}
\item Symmetric benchmark instances from
MIPLIB~2010~\cite{KochEtAl2011MIPLIB} and
MIPLIB~2017~\cite{Gleixner2021MIPLIB}.
\item Existence of minimum $t\text{-}(v,k,\lambda)$-covering designs.
\item Noise dosage problem instances as discussed by 
Sherali and Smith~\cite{sherali2001models} (cf. Problem~\ref{prob:NDbinary}).
\end{itemize}
The MIPLIB instances offer a diverse set of instances that contain
symmetries, but these symmetries operate on binary variables predominantly.
To evaluate the effectiveness of our framework for non-binary problems, we
consider the covering design and noise dosage instances.
The symmetries of the former are orbitopal, whereas the latter has no
orbitopal symmetries.

Although our framework allows to handle more general symmetries, we
restrict the numerical experiments to permutation symmetries.
On the one hand, \code{SCIP} can only detect permutation symmetries at this
point of time.
On the other hand, most symmetry handling methods discussed in the literature only
apply to permutation symmetries.
The development of methods for other kinds of symmetries is out of scope of
this article.

\subsection{Solver components and configurations}
We use a development version of 
the solver \code{SCIP~8.0.3.5}~\cite{BestuzhevaEtal2021OO},
commit 
\texttt{8443db2}\footnote{Public mirror: \url{https://github.com/scipopt/scip/tree/8443db213892153ff2e9d6e70c343024fb26968c}},
with LP-solver \code{Soplex~6.0.3}.
\code{SCIP} contains implementations of the state-of-the-art methods
\lexfix, orbitopal fixing, and \orbitalfixing to which we compare our methods.
We have extended the code with our dynamic methods.
Our modified code is available on 
GitHub\footnote{Project page: \url{https://github.com/JasperNL/scip-unified}}.
This repository also contains the instance generators and 
problem instances for the noise dosage and covering design problems.

For all settings, symmetries are detected
by finding automorphisms of a suitable
graph~\cite{pfetsch2019computational,salvagnin2005dominance} using
\code{bliss~0.77}~\cite{JunttilaKaski2015bliss}.
We make use of the readily implemented symmetry detection code of \code{SCIP},
which finds a set of permutations $\Pi$ that generate a
symmetry group $\Gamma$ of the problem, namely the symmetries implied
by its formulation~\cite{pfetsch2019computational,salvagnin2005dominance,margot2010symmetry}.
This is a permutation group acting on the solution vector index space,
so the setting of Section~\ref{sec:overview} and~\ref{sec:cast} applies.

If~$\Gamma$ is a product group consisting of $k$ components, i.e., $\Gamma
= \bigtimes_{i \in [k]} \Gamma_i$,
then by using similar arguments 
as in~\cite[Proposition~5]{hojny2019polytopes},
the symmetries of the different components~$\Gamma_i$, $i \in [k]$, can be
handled independently; compositions of permutations from different
components do not need to be taken into account.
In particular, it is possible to select a different symmetry prehandling
structure for the different components.
We therefore decompose the set of permutations $\Pi$ generating $\Gamma$
that are found by \code{SCIP} in components,
yielding generating sets~$\Pi_1, \dots, \Pi_k$ 
for components~$\Gamma_1, \dots, \Gamma_k$.
Symmetry in each component is handled separately.

For all settings, we disable restarts to ensure that all methods exploit
the same symmetry information.
We compare our newly implemented methods to the
methods originally implemented in \code{SCIP}.

\paragraph{Our configurations}
For every component $\Gamma_i$, 
we handle symmetries as follows, where we skip some steps if the
corresponding symmetry handling method is disabled.
\begin{enumerate}
\item\label{step:orbitope} If a \code{SCIP}-internal heuristic,
  cf.~\cite[Sec.~4.1]{hojny2019polytopes}, detects that~$\Gamma_i$ consists
  of orbitopal symmetries:
  \begin{enumerate}
  \item\label{step:cons} If the orbitope matrix is a single row 
    $[x_1 \cdots x_\ell]$, add linear constraints
    $x_1 \geq \dots \geq x_\ell$.
  \item Otherwise, if the orbitope matrix contains only two columns 
    (i.e., is generated by a single permutation $\gamma$),
    then use lexicographic reduction using the dynamic
    variable ordering of Example~\ref{ex:vardynamic},
    as described in Section~\ref{sec:gen:lexred}.
  \item\label{step:pack} Otherwise, if there are at least 3 rows with 
    binary variables whose sum is at most 1 (so-called packing-partitioning type)
    then use the complete static propagation method for packing-partitioning
    orbitopes as described by Kaibel and Pfetsch~\cite{KaibelPfetsch2008},
    where the orbitope matrix is restricted to the rows with this structure.
  \item\label{step:dyn} Otherwise, use dynamic orbitopal reduction
    as described in Section~\ref{sec:gen:orbitopalfixing} 
    using the dynamic variable ordering of Example~\ref{ex:orbitopalfixing}.
    We select $\varphi_\beta$ such that it
    swaps the column containing the branched variable
    to the middlemost (or leftmost) symmetrically equivalent column
    when propagating the \SHCs~\eqref{eq:main}
    using static orbitopal reduction.
  \end{enumerate}
\item Otherwise (i.e., if the symmetries are not orbitopal),
  use the symmetry prehandling structure of Example~\ref{ex:vardynamic}
  and use two compatible methods simultaneously:
  \begin{enumerate}
  \item Lexicographic reduction as described 
    in Section~\ref{sec:gen:lexred}.
  \item Orbital reduction as described 
    in Section~\ref{sec:gen:orbitalreduction}.
    Since computing $\Delta^\beta$ is non-trivial,
    we work with a subgroup of $\Delta^\beta$,
    namely the group generated by all permutations $\gamma \in \Pi_i$
    for which $\sigma_\beta(x) \leq \sigma_\beta(\gamma(x))$
    for all $x \in \bigtimes_{j \in [n]} \domain_j^\beta$.
  \end{enumerate}
\end{enumerate}

We also compare settings where 
orbitopal reduction, orbital reduction and lexicographic reduction
are turned off. If orbitopal symmetries are not handled, we always
resort to the second setting, where lexicographic reduction (if enabled)
and/or orbital reduction (if enabled) are applied.

For orbitopal symmetries, we have chosen to handle certain common cases
before refraining to the dynamic orbitopal reduction code that we devised.
First, for single-row orbitopes, the symmetry is completely handled
by the linear constraints.
Since linear constraints are strong and work well with other components
of the solver, we decided to handle those this way.
If an orbitope only has two columns, the underlying symmetry group
is generated by a single permutation of order~2. In that case,
the symmetry is completely handled by lexicographic reduction.
Third, it is well known that exploiting problem-specific information
can greatly assist symmetry handling.
If a packing-partitioning structure can be detected, we therefore apply
specialized methods as discussed above.
Otherwise, we use orbitopal reduction as discussed in
Section~\ref{sec:gen:orbitopalfixing}.
In Step~\ref{step:dyn}, the choice
of~$\varphi_\beta$ is inspired by the discussion in
Section~\ref{sec:theframework} (``Interpretation'').
By moving the branching variable to the middlemost possible column,
balanced subproblems are created, whereas the leftmost possible column
might lead to more reductions in one child than in the other.
Below, we will investigate which technique is more favorable.

For the components that are do not consist of orbitopal symmetries,
we decided to settle on the setting of Example~\ref{ex:vardynamic}
and use both compatible methods.
Since the \code{SCIP} version that we compare to either uses orbital fixing or
(static) lexicographic fixing for such components, our setting allows us to
assess the impact of adapting lexicographic fixing to make it compatible
with orbital fixing.

\paragraph{Comparison base}
We compare to similar, readily implemented methods in \code{SCIP}.
In \code{SCIP} jargon, these methods are called \emph{polyhedral} and
\emph{orbital fixing} and can be enabled/disabled independently.
The polyhedral methods consist of \lexfix for the \SHCs~$x \succeq
\gamma(x)$ for~$\gamma \in \Pi_i$ and methods to handle orbitopal
symmetries; orbital fixing uses \orbitalfixing
from~\cite{pfetsch2019computational}.
Note that only symmetries of binary variables are handled.

If a component consists of polytopal symmetries, the polyhedral methods
handle these symmetries by carrying out the check of Step~\ref{step:pack}.
In case it evaluates positively, methods exploiting packing-partitioning
structures are applied as described above.
Otherwise, orbitopal symmetries of binary variables are handled by a
variant of row-dynamic orbitopal fixing.
The remaining components are either handled by static \lexfix or orbital
fixing, depending on whether the polyhedral methods or orbital fixing is
enabled.
Moreover, if polyhedral methods are disabled, orbital fixing is also
applied to components consisting of orbitopal symmetries.

\subsection{Results}
All experiments have been run in parallel
on the Dutch National supercomputer 
Snellius ``thin'' consisting of compute nodes with dual AMD Rome 7H12
processors providing a total of \num{128} physical CPU cores,
and~\SI{256}{\giga\byte} memory.
Each process has an allocation of \num{4} physical CPU cores
and~\SI{8}{\giga\byte} of memory.
In the results below we report the running times 
(column \emph{time})
and time spent on symmetry handling (column \emph{sym}) in shifted 
geometric mean~$\prod_{i = 1}^n (t_i + 1)^{\frac{1}{n}} - 1$ 
to reduce the impact of outliers.
We also report the number of instances solved within the time limit 
of~\SI{1}{\hour} (column~\emph{\#S}). 
If the time limit is reached, the solving time of that instance
is reported as \SI{1}{\hour}.
None of the instances failed or exceeded the memory limit.
We report the aggregated results for all instances,
for all instances for which at least one setting solved the instance
within the time limit,
and for all instances solved by all settings.
For each of these classes, we provide their size below.

We use abbreviations for the settings. We compare
no symmetry handling (\emph{Nosym}),
traditional polyhedral methods (\emph{Polyh}),
traditional orbital fixing (\emph{OF}),
dynamic orbitopal reduction (\emph{OtopRed}),
dynamic lexicographic reduction (\emph{LexRed}),
orbitopal reduction (\emph{OR}),
and combinations hereof.
Note that also setting \emph{Nosym} reports a small symmetry time,
because due to \code{SCIP}'s architecture, handling the corresponding
plug-in requires some time even if it is not used.
Moreover, if symmetries are handled by linear constraints in the model
(cf.\ Step~\ref{step:cons}),
these are not reported in the symmetry handling figures.
Recall from~\ref{step:dyn} that we consider two variants of
selecting~$\varphi_\beta$ for dynamic orbitopal fixing.
We refer to these variants as \emph{first} and \emph{median} 
for the leftmost and
middlemost column, respectively.
As the noise dosage testset exclusively consists of orbitopal symmetries,
we test both parameterizations there.
For the remaining testsets, we restrict ourselves to \emph{median} as it
performs better on average for the noise dosage instances.

Since the covering design and noise dosage testsets
are relatively small and contain many easy instances, performance
variability is minimized by repeating each 
configuration-instance pair three times with a different
global random seed shift.
Due to the large number of instances, settings and large time requirements,
only one seed is used for the MIPLIB instances.

\subsubsection{MIPLIB}
\label{sec:results:miplib}
To compose our testset, we presolved all instances from the
MIPLIB~2010~\cite{KochEtAl2011MIPLIB} and
MIPLIB~2017~\cite{Gleixner2021MIPLIB} benchmark testsets
and selected those for which \code{SCIP} could detect symmetries.
This results in~129 instances.
We excluded instance \texttt{mspp16} as it exceeds the memory limit
during presolving.

The goal of our experiments for MIPLIB instances is twofold.
On the one hand, we investigate whether our framework allows to solve
symmetric mixed-integer programs faster than the state-of-the-art methods
as implemented in \code{SCIP}.
On the other hand, we are interested in the effect of adapting different
symmetry handling methods for~$\sigma_\beta(x) \succeq \sigma_\beta
\gamma(x)$ in comparison with their static counterparts.
Table~\ref{tab:results:miplib} shows the aggregate results of our
experiments.

\begin{table}[t]
\caption{Results for MIPLIB 2010 and MIPLIB 2017}
\label{tab:results:miplib}
\centering
\footnotesize
\begin{tabular}{@{}L{3.2cm}*{3}{R{1cm}R{1cm}R{.6cm}}@{}}
\toprule
\multicolumn{1}{c}{Setting}
& \multicolumn{3}{c}{All instances (128)}
& \multicolumn{3}{c}{Solved by some setting (75)}
& \multicolumn{3}{c}{Solved by all settings (51)}
\\
\cmidrule(lr){1-1}
\cmidrule(lr){2-4}
\cmidrule(lr){5-7}
\cmidrule(lr){8-10}
& time (s) & sym (s) & \#S
& time (s) & sym (s) & \#S
& time (s) & sym (s) & \#S
\\
\cmidrule{2-10}
Nosym
&   970.73 &     0.18 &   59
&   384.01 &     0.17 &   59
&   157.61 &     0.08 &   51
\\
Polyh
&   747.43 &     1.82 &   67
&   245.46 &     1.02 &   67
&   138.09 &     0.69 &   51
\\
OF
&   822.26 &     1.34 &   66
&   289.16 &     0.93 &   66
&   134.34 &     0.48 &   51
\\
Polyh + OF
&   728.88 &     1.97 &   69
&   235.17 &     0.98 &   69
&   130.08 &     0.58 &   51
\\
\midrule
OtopRed
&   811.90 &     1.34 &   62
&   282.95 &     0.75 &   62
&   129.08 &     0.49 &   51
\\
LexRed
&   799.82 &     2.22 &   67
&   275.78 &     1.30 &   67
&   142.30 &     0.73 &   51
\\
OR
&   807.75 &     4.94 &   66
&   280.46 &     3.29 &   66
&   140.62 &     1.40 &   51
\\
OR + LexRed
&   788.29 &     5.44 &   68
&   269.01 &     3.43 &   68
&   138.18 &     1.57 &   51
\\
OR + OtopRed
&   727.04 &     2.38 &   66
&   234.14 &     1.34 &   66
&   128.62 &     0.57 &   51
\\
OtopRed + LexRed
&   691.22 &     1.90 &   68
&   214.84 &     1.01 &   68
&   123.68 &     0.56 &   51
\\
OR + OtopRed + LexRed
&   708.63 &     2.57 &   67
&   224.18 &     1.43 &   67
&   125.73 &     0.63 &   51
\\
\bottomrule
\end{tabular}%
\end{table}

Regarding the first question, we observe that using any type of symmetry
handling is vastly superior over the setting where no symmetry is handled.
Considering all instances,
the best of the traditional settings reports an average running
time improvement of \SI{24.9}{\percent} over \emph{Nosym},
and the best of the dynamified methods report \SI{28.8}{\percent}.
Our framework thus allows to improve on \code{SCIP}'s state-of-the-art
by~\SI{5.2}{\percent}.
On the instances that can be solved by at least one setting, this effect is
even more pronounced and improves on \code{SCIP}'s best setting by
\SI{8.6}{\percent}.
We believe that this is a substantial improvement, because the MIPLIB
instances are rather diverse.
In particular, some of these instances contain only very few
symmetries.

Regarding the second question, we compare the \code{SCIP} settings
\emph{Polyh}, \emph{OF}, and \emph{Polyh + OF} with their counterparts in our
framework, being \emph{OtopRed + LexRed}, \emph{OR}, and \emph{OR +
  OtopRed}, respectively.
The running time of the pure polyhedral setting \emph{Polyh} can be
improved in our framework by~\SI{7.5}{\percent} when considering all
instances, and by~\SI{12.4}{\percent} when considering only the instances
solved by some setting within the time limit.
Consequently, adapting symmetry handling to the branching order via the
symmetry prehandling structure of Example~\ref{ex:vardynamic}
and~\ref{ex:orbitopalfixing} allows to gain substantial performance
improvements.
Our explanation for this behavior is that symmetry reductions can be found
much earlier in branch-and-bound than in the static setting (cf.\
Figure~\ref{fig:branching:orbitopalfixing}, where no reductions can be
found at depth~1 if no adaptation is used).
Thus, symmetric parts of the branch-and-bound tree can be pruned earlier.

Comparing \emph{OF} and \emph{OR}, we observe that \emph{OR} is slightly
faster than \emph{OF}.
Both methods, however, are much slower than \emph{Polyh} and \emph{OtopRed
  + LexRed}.
A possible explanation is that the latter methods make use of orbitopal
fixing, which can handle entire symmetry groups, whereas \emph{OF} and
\emph{OR} only find some reductions based on orbits.

Among \code{SCIP}'s methods, \emph{Polyh + OF} performs best.
Its counterpart \emph{OR + OtopRed} in our framework performs comparably on
all instances and the solvable instances, however, three fewer instances
are solved.
A possible explanation for the comparable running time is that traditional
orbital fixing and the variant of row-dynamic
orbitopal fixing are already dynamic methods.
Thus, a comparable running time can be expected (although this does not
explain the difference in the number of solved instance).

Lastly, we discuss settings which are not possible in the traditional
setting, i.e., combing \lexred and orbital reduction.
Enhancing orbital reduction by \lexred indeed leads to an improvement
of~\SI{2.4}{\percent} and allows to solve two more instances.
The best setting in our framework, however, does not enable all methods in
our framework.
Indeed, the running time of \emph{OR + OtopRed + LexRed} can be improved by
\SI{4.9}{\percent} when disabling orbital reduction.
We explain this phenomenon with the fact that orbitopal reduction already
handles a lot of group structure.
Combining \lexred and orbital reduction on the remaining components only
finds a few more reductions.
The time needed for finding these reductions is then not compensated by the
symmetry reduction effect.
If no group structure is handled via orbitopal reduction, \lexred can
indeed be enhanced by orbital reduction.

Recall that symmetries in MIPLIB instances
act predominantly on binary variables.
As opposed to the traditional settings that we compare to,
the generalized setting can handle symmetries on non-binary
variable domains.
Thus, potentially more reductions can follow from larger 
symmetry groups that include non-binary variables.
As shown in Appendix~\ref{app:tables},
a larger group is detected in only 10 out of the 128
instances, and only 2 of these can be solved by some setting.
When considering the subset of instances
where symmetry is handled based on the same group,
we report similar results as before.
This shows that even if we only consider problems
where symmetries act on the binary variables,
our generalized methods outperform similar state-of-the-art methods.

\subsubsection{Minimum $\boldsymbol{t\text{-}(v, k, \lambda)}$-covering
  designs}

Since the symmetries of MIPLIB instances predominantly act on binary
variables, we turn the focus in the following to symmetric problems without
binary variables to assess our framework in this regime.
Let~$v \geq k \geq t \geq 0$ and $\lambda > 0$ be integers. 
Let $V$ be a set of cardinality $v$, 
and let~$\mathcal K$ (resp.\@~$\mathcal T$)
be the collections of all subsets of~$V$ having sizes~$k$ (resp.\@~$t$).
A~\emph{$t\text{-}(v, k, \lambda)$-covering design} is a
multiset~$\mathcal{C} \subseteq \mathcal{K}$
if all sets $T \in \mathcal T$ are contained in 
at least $\lambda$ sets of $\mathcal C$, counting with multiplicity.
A covering design is \emph{minimum} if no smaller covering design
with these parameters exist, and finding these is of interest,
e.g., in~\cite{margot2003coveringdesigns,nurmela1999covering,fadlaoui2011tabucoveringdesigns}.
Margot~\cite{margot2003coveringdesigns} gives an ILP-formulation,
having decision variables~%
$\nu \in \{0, \dots, \lambda \}^{\mathcal K}$
specifying the multiplicity of the sets $K \in \mathcal K$
for the minimum $t\text{-}(v, k, \lambda)$-covering design sought after.
The problem is to
\begin{subequations}
\makeatletter
\def\@currentlabel{CD}
\makeatother
\renewcommand{\theequation}{CD\arabic{equation}}%
\label{eq:CD}
\begin{align}
\text{minimize}\ \sum_{K \in \mathcal K} \nu_{K}&, \\
\text{subject to}\ \sum_{K \in \mathcal K : T \subseteq K} \nu_K &\geq \lambda
&&\text{for all}\ T \in \mathcal T,\\
\nu &\in \{ 0, \dots, \lambda \}^{\mathcal K}.
\end{align}
\end{subequations}
Symmetries in this problem re-label the elements of $V$,
and these are also detected by the symmetry detection routine of \code{SCIP}.
Note that, although the underlying group is symmetric,
these symmetries in terms of the variables $\nu$ are not orbitopal.

Margot~\cite{margot2003coveringdesigns} considers
an instance with $\lambda = 1$, which is a binary problem.
We consider all non-binary instances with parameters 
$\lambda \in \{ 2, 3 \}$ and $12 \geq v \geq k \geq t \geq 1$,
and restricted ourselves to instances that were solved
within \SI{7200}{\second} in preliminary runs
and that require at least \SI{10}{\second} for some setting.
This way, 58 instances remain.
With 3 seeds per instance, we end up with~174 results.
The aggregated results are shown in Table~\ref{tab:results:coveringdesigns}.
Because the instances are non-binary,
none of the considered traditional methods can be applied to handle
the symmetries. As such, we can only compare to no symmetry handling.
The best of our instances reports an improvement of~\SI{77.8}{\percent}
over no symmetry handling.
\begin{table}[t]
\caption{Results for finding minimum $t\text{-}(v,k,\lambda)$ covering designs}
\label{tab:results:coveringdesigns}
\centering
\footnotesize
\begin{tabular}{@{}L{2.4cm}*{3}{R{1cm}R{1cm}R{.6cm}}@{}}
\toprule
\multicolumn{1}{c}{Setting}
& \multicolumn{3}{c}{All instances (174)}
& \multicolumn{3}{c}{Solved by some setting (171)}
& \multicolumn{3}{c}{Solved by all settings (126)}
\\
\cmidrule(lr){1-1}
\cmidrule(lr){2-4}
\cmidrule(lr){5-7}
\cmidrule(lr){8-10}
& time (s) & sym (s) & \#S
& time (s) & sym (s) & \#S
& time (s) & sym (s) & \#S
\\
\cmidrule{2-10}
Nosym
&   211.13 &     0.16 &  138
&   200.85 &     0.15 &  138
&   100.65 &     0.08 &  126
\\
LexRed
&    80.98 &     2.19 &  154
&    75.72 &     2.01 &  154
&    30.67 &     0.60 &  126
\\
OR
&    42.75 &     0.72 &  159
&    39.49 &     0.64 &  159
&    18.52 &     0.20 &  126
\\
OR + LexRed
&    39.92 &     1.51 &  159
&    36.83 &     1.35 &  159
&    17.00 &     0.50 &  126
\\
\bottomrule
\end{tabular}%
\end{table}

If orbital reduction and \lexred are not combined, orbital reduction is the
more competitive method as it improves upon \lexred by~\SI{47.2}{\percent}.
Although \lexred is much faster than \emph{Nosym}, this comparison shows
that more symmetries can be handled when the group structure is exploited
via orbits.
Nevertheless, our framework allows to further improve orbital reduction by
another~\SI{6.6}{\percent} if it is combined with \lexred.
That is, orbital reduction is not able to capture the entire group
structure and missing information can be handled by \lexred efficiently via
our framework.

\subsubsection{Noise dosage}
\label{sec:results:noisedosage}
To assess the effectiveness of orbitopal reduction isolated from other
symmetry handling methods, we consider the noise dosage (ND)
problem~\cite{sherali2001models}, which has orbitopal symmetries as
explained in Problem~\ref{prob:NDbinary}.
However, we replace the binary constraint~\eqref{prob:ND:binary}
by~${\vartheta \in \Z_{\geq 0}^{p \times q}}$ to be able to evaluate the
effect of orbitopal reduction on non-binary problem.
In particular, we are interested whether the choice of the
\emph{left} or \emph{median} variants
matters for the parameter~$\varphi_\beta$.

For each of the parameters $(p, q) \in \{ (3, 8), (4, 9), (5, 10) \}$,
Sherali and Smith have generated four instances~\cite{sherali2001models}.
We thank J.\@ Cole Smith for providing us these instances.
It turns out that these instances are dated and very easy to solve 
even without symmetry handling methods.
As such, we have extended the testset.
For each parameterization
$(p, q) \in \{ (6, 11), (7, 12), \dots, (11, 16) \}$,
we generate five instances.
The details of our generator are in Appendix~\ref{app:noisedosage}.

Symmetries in the ND problem can be handled by adding
$\sum_{i=1}^p M^{p-i} \vartheta_{i,j} \geq \sum_{i=1}^p M^{p-i} \vartheta_{i,j+1}$
for $j \in \{1, \dots, q-1\}$,
where $M$ is an upper bound to the maximal number of tasks
that one worker can perform on a machine,
as described by Sherali and Smith~\cite{sherali2001models}.
This is similar to fundamental domain inequalities~\cite{friedman2007fundamental}
and the symmetry handling constraints of Ostrowski~\cite{ostrowski2009symmetry}.
Although these can be used to handle symmetries,
it is folklore that problems with largely deviating coefficients can lead
to numerical instabilities.
These constraints work well for instances with a small number of machines $p$,
such as in the original instances of Sherali and Smith.
However, for our instance \texttt{noise11\_16\_480\_s2}, such a constraint
has minimal absolute coefficient~1 and maximal absolute coefficient~$11^{10}$.
Various warnings in the log files confirm the presence of numerical
instabilities.

In fact, observable incorrect results follow.
For instance, when adding these linear constraints,
instance \texttt{noise11\_16\_480\_s1} finds infeasibility
during presolving, whereas it is a feasible instance
as illustrated by the no symmetry handling and
dynamic orbitopal fixing runs.
Moreover, for instance \texttt{noise9\_14\_480\_s3},
no infeasibility is detected, but reports a wrong optimal solution.
Thus, there is a need to replace these inequalities with numerically more
stable methods.

\begin{table}[t]
\caption{Results for finding optimal solutions to the noise dosage problems}
\label{tab:results:noisedosage}
\centering
\footnotesize
\begin{tabular}{@{}L{2.4cm}*{3}{R{1cm}R{1cm}R{.6cm}}@{}}
\toprule
\multicolumn{1}{c}{Setting}
& \multicolumn{3}{c}{All instances (165)}
& \multicolumn{3}{c}{Solved by some setting (132)}
& \multicolumn{3}{c}{Solved by all settings (90)}
\\
\cmidrule(lr){1-1}
\cmidrule(lr){2-4}
\cmidrule(lr){5-7}
\cmidrule(lr){8-10}
& time (s) & sym (s) & \#S
& time (s) & sym (s) & \#S
& time (s) & sym (s) & \#S
\\
\cmidrule{2-10}
Nosym
&   152.97 &     1.33 &   90
&    69.02 &     0.87 &   90
&    10.13 &     0.19 &   90
\\
Sherali-Smith
&    26.45 &     0.66 &  129
&     7.11 &     0.18 &  129
&     1.30 &     0.02 &   90
\\
OtopRed (first)
&    35.55 &     4.11 &  123
&    10.60 &     1.29 &  123
&     2.10 &     0.20 &   90
\\
OtopRed (median)
&    33.89 &     3.84 &  117
&     9.95 &     1.25 &  117
&     1.73 &     0.13 &   90
\\
\bottomrule
\end{tabular}%
\end{table}%

We have removed these two instances from our testset as they obviously
result in wrong results.
The aggregated results for the remaining instances are presented in
Table~\ref{tab:results:noisedosage}.
We observe that the symmetry handling inequalities perform~\SI{22.0}{\percent} better than
orbitopal reduction.
However, the presented numbers for the inequality-based approach need
to be interpreted carefully as reporting a correct objective value does not
necessarily mean that the branch-and-bound algorithm worked correctly.
For instance, nodes might have been pruned because of numerical
inaccuracies although a numerical correct algorithm would not have pruned
them.
These issues do not occur for the propagation algorithm orbitopal reduction
as it just reduces variable domains.

Comparing the two parameterizations of orbitopal fixing, we see that the
\emph{median} variant performs \SI{4.7}{\percent} better than the
\emph{first} variant.
This shows that the choice for the column reordering by 
symmetry~$\varphi_\beta$ has a measurable impact on the running time for
dynamic orbitopal reduction.
A possible explanation for the median rule to perform better than the first
rule is that median creates more balanced branch-and-bound trees, cf.\
Section~\ref{sec:theframework}.
Consequently, the right choice of~$\varphi_\beta$ might significantly
change the performance of orbitopal reduction.
We leave it as future research to find a good rule for the selection
of~$\varphi_\beta$ as this rule might be based on the structure of the
underlying problem, e.g., size of the variable domains, number of rows, and
number of columns.

\section{Conclusions and future research}

Symmetry handling is an important component of modern solver technology.
One of the main issues, however, is the selection and combination of
different symmetry handling methods.
Since the latter is non-trivial, we have proposed a flexible framework
that easily allows to check whether different methods are compatible and
that does apply for arbitrary variable domains.
Numerical results show that our framework is substantially faster than
symmetry handling in the state-of-the-art solver \code{SCIP}.
In particular, we benefit from combining different symmetry handling
methods, which is possible in our framework, but only in a limited way in
\code{SCIP}.
Moreover, due to our generalization of symmetry handling algorithms for
binary problems to general variable domains, our framework allows us to
reliably handle symmetries in different non-binary applications.

Due to the flexibility of our framework, it is not only applicable
for the methods discussed in this article, but also allows to apply methods
that will be developed in the future (provided they are compatible with
\SHCs~\eqref{eq:main}).
This opens, among others, the following directions for future research.

In this article, we experimentally evaluated our framework only for
permutation symmetries.
As the framework also supports other types of symmetries
such as rotational and reflection symmetries,
further research could involve devising symmetry handling methods
for such symmetries.
Moreover, to handle permutation symmetries, we only used propagation
techniques.
In the future, these methods can be complemented in two ways.
On the one hand, other techniques such as separation routines can be
applied to handle \SHCs~\eqref{eq:main}.
On the other hand, our symmetry handling methods have not exploited
additional problem structure such as packing-partitioning structures in
orbitopal fixing.
Further research can focus on the incorporation of problem constraints
in handling the symmetry handling constraint of
Theorem~\ref{thm:main}.
This includes a dynamification of packing-partitioning orbitopes,
as well as introducing a way to handle overlapping orbitopal 
subgroups within a component.

Last, in the computational results we describe decision rules for
enabling/disabling certain symmetry handling methods.
If new symmetry handling methods are cast into our
framework, however, these rules need to be updated.
Future research could thus encompass the derivation of good rules for how
to handle symmetries in our framework.

\paragraph{Acknowledgment}
We thank J.\ Cole Smith for providing us the instances of the noise dosage
problem used in~\cite{sherali2001models}.

\clearpage
\appendix
\section{Further results for MIPLIB}
\label{app:tables}

In this appendix, we investigate in more detail the performance gains achieved by our methods for MIPLIB instances in comparison to the traditional methods.
The traditional methods only work for symmetries on binary variables,
whereas our generalized methods can also handle non-binary variables.
As the symmetric MIPLIB benchmark instances contain mostly instances with binary symmetries, the question arises whether the observed gains are thus due to a few instances for which our methods can handle more (non-binary) symmetries.

To investigate this effect, 
we have partitioned these instances in two classes.
The first class contains instances where more symmetries than
in the traditional setting arise;
the second class contains the remaining instances, i.e., both settings
handle the same symmetries.
The results for the subsets of the instances are shown in
Table~\ref{tab:miplib:diff} and~\ref{tab:miplib:same},
respectively.

\begin{table}[b!]
\caption{MIPLIB results where the detected symmetry group is different for the traditional setting and our generalized setting.}
\label{tab:miplib:diff}
\centering
\footnotesize
\begin{tabular}{@{}L{3.2cm}*{3}{R{1.1cm}R{1cm}R{.6cm}}@{}}
\toprule
\multicolumn{1}{c}{Setting}
& \multicolumn{3}{c}{All instances (10)}
& \multicolumn{3}{c}{Solved by some setting (2)}
& \multicolumn{3}{c}{Solved by all settings (1)}
\\
\cmidrule(lr){1-1}
\cmidrule(lr){2-4}
\cmidrule(lr){5-7}
\cmidrule(lr){8-10}
& time (s) & sym (s) & \#S
& time (s) & sym (s) & \#S
& time (s) & sym (s) & \#S
\\
\cmidrule{2-10}
Nosym
&  2397.54 &     0.15 &    1
&   468.79 &     0.02 &    1
&    60.29 &     0.01 &    1
\\
Polyh
&  2369.31 &     4.29 &    1
&   436.46 &     0.01 &    1
&    52.14 &     0.00 &    1
\\
OF
&  2396.04 &     3.09 &    1
&   469.20 &     0.00 &    1
&    60.39 &     0.00 &    1
\\
Polyh + OF
&  2341.74 &     5.69 &    1
&   413.65 &     0.01 &    1
&    46.72 &     0.00 &    1
\\
\midrule
OtopRed
&  2395.33 &     2.63 &    1
&   468.48 &     0.06 &    1
&    60.20 &     0.02 &    1
\\
LexRed
&  2321.62 &     7.21 &    2
&   400.65 &     0.30 &    2
&    46.62 &     0.00 &    1
\\
OR
&  2390.39 &    15.94 &    1
&   463.55 &    12.32 &    1
&    58.93 &     0.17 &    1
\\
OR + LexRed
&  2383.85 &    17.78 &    2
&   456.75 &    12.60 &    2
&    59.05 &     0.24 &    1
\\
OR + OtopRed
&  2402.87 &     3.62 &    1
&   467.93 &     0.04 &    1
&    60.06 &     0.02 &    1
\\
OtopRed + LexRed
&  2399.78 &     3.50 &    1
&   472.57 &     0.03 &    1
&    61.24 &     0.00 &    1
\\
OR + OtopRed + LexRed
&  2395.61 &     3.62 &    1
&   468.22 &     0.07 &    1
&    60.13 &     0.03 &    1
\\
\bottomrule
\end{tabular}%
\end{table}%
\begin{table}[b!]
\caption{MIPLIB results where the detected symmetry group is the same for the traditional setting and our generalized setting.}
\label{tab:miplib:same}
\centering
\footnotesize
\begin{tabular}{@{}L{3.2cm}*{3}{R{1.1cm}R{1cm}R{.6cm}}@{}}
\toprule
\multicolumn{1}{c}{Setting}
& \multicolumn{3}{c}{All instances (118)}
& \multicolumn{3}{c}{Solved by some setting (73)}
& \multicolumn{3}{c}{Solved by all settings (50)}
\\
\cmidrule(lr){1-1}
\cmidrule(lr){2-4}
\cmidrule(lr){5-7}
\cmidrule(lr){8-10}
& time (s) & sym (s) & \#S
& time (s) & sym (s) & \#S
& time (s) & sym (s) & \#S
\\
\cmidrule{2-10}
Nosym
&   899.10 &     0.18 &   58
&   381.92 &     0.17 &   58
&   160.65 &     0.08 &   50
\\
Polyh
&   677.77 &     1.67 &   66
&   241.61 &     1.06 &   66
&   140.79 &     0.71 &   50
\\
OF
&   750.98 &     1.23 &   65
&   285.34 &     0.97 &   65
&   136.50 &     0.49 &   50
\\
Polyh + OF
&   660.19 &     1.77 &   68
&   231.55 &     1.02 &   68
&   132.76 &     0.59 &   50
\\
\midrule
OtopRed
&   740.73 &     1.25 &   61
&   279.06 &     0.77 &   61
&   131.06 &     0.50 &   50
\\
LexRed
&   730.72 &     1.98 &   65
&   272.97 &     1.34 &   65
&   145.50 &     0.75 &   50
\\
OR
&   736.76 &     4.44 &   65
&   276.63 &     3.16 &   65
&   143.08 &     1.43 &   50
\\
OR + LexRed
&   717.69 &     4.88 &   66
&   265.14 &     3.29 &   66
&   140.54 &     1.61 &   50
\\
OR + OtopRed
&   656.95 &     2.29 &   65
&   229.74 &     1.40 &   65
&   130.59 &     0.59 &   50
\\
OtopRed + LexRed
&   621.98 &     1.79 &   67
&   210.24 &     1.04 &   67
&   125.43 &     0.57 &   50
\\
OR + OtopRed + LexRed
&   639.09 &     2.49 &   66
&   219.69 &     1.48 &   66
&   127.59 &     0.64 &   50
\\
\bottomrule
\end{tabular}%
\end{table}%

There are only 10 instances in the first class,
and only 2 of these can be solved by any of the settings considered.
So, indeed, 
as mentioned in the beginning of Section~\ref{sec:num},
the symmetrical instances from MIPLIB act predominantly on
binary variables.
Since only few instances handle symmetries based on a different group,
this only has a small effect on the reported results
in Section~\ref{sec:results:miplib}.
When restricting to the instances with the same symmetries
(Table~\ref{tab:miplib:same}),
the best of our methods on all instances
achieves an improvement of~\SI{5.8}{\percent}
over the traditional best, 
as opposed to the figure of~\SI{5.2}{\percent} from Section~\ref{sec:results:miplib}.
This shows that our generalized methods outperform similar
state-of-the-art methods even for problems where symmetries 
only act on binary variables.

\clearpage
\section{Testset generation for Noise Dosage instances}
\label{app:noisedosage}

In this appendix, we describe how we generate the instances of the noise
dosage problem used in our experiments.
We have extended the testset used by
Sherali and Smith~\cite{sherali2001models}, by including instances
with more workers and more machines.
Since the generator of thei instances is not available to us,
we have analyzed the instances from~\cite{sherali2001models}
provided by J. Cole Smith and extracted features.

Most importantly, we have observed that the total worker time is about 
half the total time required by the machines, and that the number of tasks
per machine range from 4 to 9. 
The noise dosage units and time per machine are floating point numbers.
Given $p$ machines and $q$ workers,
the workers work at most $H = 480$ hours,
we sample the number of tasks $d_i$ 
per machine uniformly (discrete) random between 4 and 10,
then choose $\mu = \frac{1}{2H} \sum_{i=1}^p d_i$
and sample the time per task from the normal distribution
with mean $\mu$ and standard deviation $\frac15 \mu$, 
i.e., $\mathcal N(\mu, \frac15 \mu)$.
Last, the noise units are sampled from $\mathcal N(18, 4)$.
When sampling from the normal distribution,
we ignore sample values with a negative value.

Our instance generator and generated instance files 
are publicly available 
at~\url{https://github.com/JasperNL/scip-unified/tree/unified/problem_instances}.


\begin{thebibliography}{10}

\bibitem{LandDoig1960}
A.~H. Land and A.~G. Doig, ``An automatic method of solving discrete
  programming problems,'' {\em Econometrica}, vol.~28, no.~3, pp.~497--520,
  1960.

\bibitem{HorstTuy1996}
R.~Horst and H.~Tuy, {\em Global Optimization: Deterministic Approaches},
  vol.~3.
\newblock Springer, 1996.

\bibitem{KouyialisEtAl2019}
G.~Kouyialis, X.~Wang, and R.~Misener, ``Symmetry detection for quadratic
  optimization using binary layered graphs,'' {\em Processes}, vol.~7, no.~11,
  2019.

\bibitem{Liberti2012}
L.~Liberti, ``Symmetry in mathematical programming,'' in {\em Mixed Integer
  Nonlinear Programming} (J.~Lee and S.~Leyffer, eds.), vol.~154 of {\em IMA
  Series}, pp.~263--283, Springer New York, 2011.

\bibitem{pfetsch2019computational}
M.~E. Pfetsch and T.~Rehn, ``A computational comparison of symmetry handling
  methods for mixed integer programs,'' {\em Mathematical Programming
  Computation}, vol.~11, no.~1, pp.~37--93, 2018.

\bibitem{BendottiEtAl2021}
P.~Bendotti, P.~Fouilhoux, and C.~Rottner, ``Orbitopal fixing for the full
  (sub-)orbitope and application to the unit commitment problem,'' {\em
  Mathematical Programming}, vol.~186, pp.~337--372, 2021.

\bibitem{Friedman2007}
E.~J. Friedman, ``Fundamental domains for integer programs with symmetries,''
  in {\em Combinatorial Optimization and Applications} (A.~Dress, Y.~Xu, and
  B.~Zhu, eds.), vol.~4616 of {\em Lecture Notes in Computer Science},
  pp.~146--153, Springer Berlin Heidelberg, 2007.

\bibitem{Hojny2020}
C.~Hojny, ``Packing, partitioning, and covering symresacks,'' {\em Discrete
  Applied Mathematics}, vol.~283, pp.~689--717, 2020.

\bibitem{hojny2019polytopes}
C.~Hojny and M.~E. Pfetsch, ``Polytopes associated with symmetry handling,''
  {\em Mathematical Programming}, vol.~175, no.~1-2, pp.~197--240, 2018.

\bibitem{KaibelEtAl2011}
V.~Kaibel, M.~Peinhardt, and M.~E. Pfetsch, ``Orbitopal fixing,'' {\em Discrete
  Optimization}, vol.~8, no.~4, pp.~595--610, 2011.

\bibitem{KaibelPfetsch2008}
V.~Kaibel and M.~E. Pfetsch, ``Packing and partitioning orbitopes,'' {\em
  Mathematical Programming}, vol.~114, no.~1, pp.~1--36, 2008.

\bibitem{Liberti2008}
L.~Liberti, ``Automatic generation of symmetry-breaking constraints,'' in {\em
  Combinatorial optimization and applications}, vol.~5165 of {\em Lecture Notes
  in Computer Science}, pp.~328--338, Berlin: Springer, 2008.

\bibitem{Liberti2012a}
L.~Liberti, ``Reformulations in mathematical programming: automatic symmetry
  detection and exploitation,'' {\em Mathematical Programming}, vol.~131,
  no.~1-2, pp.~273--304, 2012.

\bibitem{LibertiOstrowski2014}
L.~Liberti and J.~Ostrowski, ``Stabilizer-based symmetry breaking constraints
  for mathematical programs,'' {\em Journal of Global Optimization}, vol.~60,
  pp.~183--194, 2014.

\bibitem{Margot2003}
F.~Margot, ``Exploiting orbits in symmetric {ILP},'' {\em Mathematical
  Programming}, vol.~98, no.~1--3, pp.~3--21, 2003.

\bibitem{ostrowski2009symmetry}
J.~Ostrowski, {\em Symmetry in integer programming}.
\newblock {PhD} dissertation, Lehigh University, 2009.

\bibitem{OstrowskiEtAl2011}
J.~Ostrowski, J.~Linderoth, F.~Rossi, and S.~Smriglio, ``Orbital branching,''
  {\em Mathematical Programming}, vol.~126, no.~1, pp.~147--178, 2011.

\bibitem{Margot2002}
F.~Margot, ``Pruning by isomorphism in branch-and-cut,'' {\em Mathematical
  Programming}, vol.~94, no.~1, pp.~71--90, 2002.

\bibitem{AlperinBell1995}
J.~L. Alperin and R.~B. Bell, {\em Groups and Representations}.
\newblock Graduate Texts in Mathematics, Springer, 1995.

\bibitem{salvagnin2005dominance}
D.~Salvagnin, ``A dominance procedure for integer programming,'' {\em
  Master’s thesis, University of Padova, Padova, Italy}, 2005.

\bibitem{margot2010symmetry}
F.~Margot, ``Symmetry in integer linear programming,'' {\em 50 Years of Integer
  Programming 1958-2008}, pp.~647--686, 2009.

\bibitem{sherali2001models}
H.~D. Sherali and J.~C. Smith, ``Improving discrete model representations via
  symmetry considerations,'' {\em Management Science}, vol.~47, pp.~1396--1407,
  oct 2001.

\bibitem{babai1983canonical}
L.~Babai and E.~M. Luks, ``Canonical labeling of graphs,'' in {\em Proceedings
  of the fifteenth annual {ACM} symposium on Theory of computing - {STOC}
  {\textquotesingle}83}, {ACM} Press, 1983.

\bibitem{doornmalenhojny2022cyclicsymmetries}
J.~van Doornmalen and C.~Hojny, ``Efficient propagation techniques for handling
  cyclic symmetries in binary programs.'' preprint available at optimization
  online \url{http://www.optimization-online.org/DB_HTML/2022/03/8812.html}.

\bibitem{friedman2007fundamental}
E.~J. Friedman, ``Fundamental domains for integer programs with symmetries,''
  in {\em International Conference on Combinatorial Optimization and
  Applications}, pp.~146--153, Springer, 2007.

\bibitem{BestuzhevaEtal2021OO}
K.~Bestuzheva, M.~Besan{\c{c}}on, W.-K. Chen, A.~Chmiela, T.~Donkiewicz, J.~van
  Doornmalen, L.~Eifler, O.~Gaul, G.~Gamrath, A.~Gleixner, L.~Gottwald,
  C.~Graczyk, K.~Halbig, A.~Hoen, C.~Hojny, R.~van~der Hulst, T.~Koch,
  M.~L{\"u}bbecke, S.~J. Maher, F.~Matter, E.~M{\"u}hmer, B.~M{\"u}ller, M.~E.
  Pfetsch, D.~Rehfeldt, S.~Schlein, F.~Schl{\"o}sser, F.~Serrano, Y.~Shinano,
  B.~Sofranac, M.~Turner, S.~Vigerske, F.~Wegscheider, P.~Wellner, D.~Weninger,
  and J.~Witzig, ``{The SCIP Optimization Suite 8.0},'' technical report,
  Optimization Online, 2021.

\bibitem{margot2007symmetric}
F.~Margot, ``Symmetric {ILP}: Coloring and small integers,'' {\em Discrete
  Optimization}, vol.~4, no.~1, pp.~40--62, 2007.

\bibitem{Salvagnin2018}
D.~Salvagnin, ``Symmetry breaking inequalities from the {Schreier-Sims}
  table,'' in {\em Integration of Constraint Programming, Artificial
  Intelligence, and Operations Research} (W.-J. van Hoeve, ed.), pp.~521--529,
  Springer International Publishing, 2018.

\bibitem{LinderothEtAl2021}
J.~Linderoth, J.~N\'{u}\~{n}ez Ares, J.~Ostrowski, F.~Rossi, and S.~Smriglio,
  ``Orbital conflict: Cutting planes for symmetric integer programs,'' {\em
  INFORMS Journal on Optimization}, vol.~3, no.~2, pp.~139--153, 2021.

\bibitem{GhoniemSherali2011}
A.~Ghoniem and H.~D. Sherali, ``Defeating symmetry in combinatorial
  optimization via objective perturbations and hierarchical constraints,'' {\em
  IIE Transactions}, vol.~43, no.~8, pp.~575--588, 2011.

\bibitem{MendezDiazZabala2001}
I.~M. D\'iaz and P.~Zabala, ``A polyhedral approach for graph coloring,'' {\em
  Electronic Notes in Discrete Mathematics}, vol.~7, pp.~178--181, 2001.

\bibitem{OstrowskiAnjosVannelli2015}
J.~Ostrowski, M.~F. Anjos, and A.~Vannelli, ``Modified orbital branching for
  structured symmetry with an application to unit commitment,'' {\em
  Mathematical Programming}, vol.~150, no.~1, pp.~99--129, 2015.

\bibitem{FischettiLiberti2012}
M.~Fischetti and L.~Liberti, ``Orbital shrinking,'' in {\em Combinatorial
  Optimization} (A.~R. Mahjoub, V.~Markakis, I.~Milis, and V.~T. Paschos,
  eds.), vol.~7422 of {\em Lecture Notes in Computer Science}, pp.~48--58,
  Springer Berlin Heidelberg, 2012.

\bibitem{Bodi2013}
R.~B{\"{o}}di, K.~Herr, and M.~Joswig, ``Algorithms for highly symmetric linear
  and integer programs,'' {\em Mathematical Programming}, vol.~137, no.~1,
  pp.~65--90, 2013.

\bibitem{Herr2013}
K.~Herr, {\em Core Sets and Symmetric Convex Optimization}.
\newblock PhD thesis, Technische Universit\"at Darmstadt, 2013.

\bibitem{HerrRehnSchuermann2013}
K.~Herr, T.~Rehn, and A.~Sch\"urmann, ``Exploiting symmetry in integer convex
  optimization using core points,'' {\em Operations Research Letters}, vol.~41,
  no.~3, pp.~298--304, 2013.

\bibitem{KochEtAl2011MIPLIB}
T.~Koch, T.~Achterberg, E.~Andersen, O.~Bastert, T.~Berthold, R.~E. Bixby,
  E.~Danna, G.~Gamrath, A.~M. Gleixner, S.~Heinz, A.~Lodi, H.~Mittelmann,
  T.~Ralphs, D.~Salvagnin, D.~E. Steffy, and K.~Wolter, ``{MIPLIB} 2010,'' {\em
  Mathematical Programming Computation}, vol.~3, no.~2, pp.~103--163, 2011.

\bibitem{Gleixner2021MIPLIB}
A.~Gleixner, G.~Hendel, G.~Gamrath, T.~Achterberg, M.~Bastubbe, T.~Berthold,
  P.~M. Christophel, K.~Jarck, T.~Koch, J.~Linderoth, M.~L\"ubbecke, H.~D.
  Mittelmann, D.~Ozyurt, T.~K. Ralphs, D.~Salvagnin, and Y.~Shinano, ``{MIPLIB
  2017: Data-Driven Compilation of the 6th Mixed-Integer Programming
  Library},'' {\em Mathematical Programming Computation}, vol.~13,
  pp.~443--490, 2021.

\bibitem{JunttilaKaski2015bliss}
T.~Junttila and P.~Kaski, ``bliss: A tool for computing automorphism groups and
  canonical labelings of graphs.''

\bibitem{margot2003coveringdesigns}
F.~Margot, ``Small covering designs by branch-and-cut,'' {\em Mathematical
  Programming}, vol.~94, pp.~207--220, jan 2003.

\bibitem{nurmela1999covering}
K.~J. Nurmela and P.~R.~J. {\"O}sterg{\aa}rd, ``New coverings of $t$-sets with
  ($t+1$)-sets,'' {\em Journal of Combinatorial Designs}, vol.~7, no.~3,
  pp.~217--226, 1999.

\bibitem{fadlaoui2011tabucoveringdesigns}
K.~Fadlaoui and P.~Galinier, ``A tabu search algorithm for the covering design
  problem,'' {\em Journal of Heuristics}, vol.~17, no.~6, pp.~659--674, 2011.

\end{thebibliography}
\end{document}